\renewcommand*{\backref}[1]{}
\renewcommand*{\backrefalt}[4]{%
    \ifcase #1 (Not cited.)%
    \or        (Cited on page~#2.)%
    \else      (Cited on pages~#2.)%
    \fi}
\newcommand{\R}{\mathbb{R}}
\newcommand{\N}{\mathbb{N}}
\DeclareMathOperator*{\argmin}{arg\,min}
\DeclareMathOperator*{\argmax}{arg\,max}
\newcommand{\cl}{\mathrm{cl}~}
\newcommand{\dom}{\mathrm{dom}~}
\newcommand{\tv}{\mathrm{TV}}
\newcommand{\ri}{\mathrm{ri\ }}
\newcommand{\interior}{\mathrm{int\ }}
\newcommand{\Rn}{\mathbb{R}^n}
\newcommand{\ubar}{\bar{u}}
\newcommand{\pbar}{\bar{p}}
\newcommand{\uvar}{u}
\newcommand{\pvar}{p}
\newcommand{\ind}{I} % indicator function
\newcommand{\conv}{\square} % inf-conv
\newcommand{\imageq}{v}
\newcommand{\imageu}{u}
\newcommand{\icHJS}{J_S}
\newcommand{\domF}{D_F}
\newcommand{\Sstar}{K}
\newcommand{\prior}{J}
\newcommand{\gmRn}{\Gamma_0(\R^n)}
\newcommand{\Vconstraint}{V}
\newcommand{\vz}{v^{(0)}}
\newcommand{\vk}{v^{(k)}}
\newcommand{\xk}{x^{(k)}}
\newcommand{\tk}{t_k}
\newcommand{\dk}{d^{(k)}}
\newcommand{\vkp}{v^{(k+1)}}
\newcommand{\wkp}{w^{(k+1)}}
\newcommand{\ykp}{y^{(k+1)}}
\newcommand{\ukp}{u^{(k+1)}}
\newcommand{\wk}{w^{(k)}}
\newcommand{\yk}{y^{(k)}}
\newcommand{\fBreg}{f}
\newcommand{\gBreg}{g}
\begin{document}

\title{On Hamilton-Jacobi PDEs and image denoising models with certain non-additive noise\thanks{Research supported by NSF DMS 1820821.
Authors' names are given in last/family name alphabetical order.
}
}

\author{J\'er\^ome Darbon         \and
        Tingwei Meng \and
        Elena Resmerita
}

\institute{J\'er\^ome Darbon \at
              Division of Applied Mathematics, Brown University \\
              \email{jerome\_darbon@brown.edu} 
        \and
        Tingwei Meng
        \at
      Division of Applied Mathematics, Brown University \\
      \email{tingwei\_meng@brown.edu}
    \and
        Elena Resmerita \at
        Institute of Mathematics,        Alpen-Adria Universit\"at Klagenfurt,        Universit\"atsstrasse 65--67,     9020 Klagenfurt, Austria\\
      \email{elena.resmerita@aau.at}           %  \\
}

\date{Received: date / Accepted: date}

\maketitle

\begin{abstract}
We consider image denoising problems formulated as variational problems. It is known that Hamilton-Jacobi PDEs govern the solution of such optimization problems when the noise model is additive. In this work, we address certain non-additive noise models and show that they are also related to Hamilton-Jacobi PDEs.
These findings allow us to establish new connections between additive and non-additive noise imaging models. 
Specifically, we study how the solutions to these optimization problems depend on the parameters and the observed images. We show that the optimal values are ruled by some Hamilton-Jacobi PDEs, while the optimizers are characterized by the spatial gradient of the solution to the Hamilton-Jacobi PDEs. Moreover, we use these relations to investigate the asymptotic behavior of the variational model as the parameter goes to infinity, that is, when the influence of the noise vanishes.
With these connections, some non-convex models for non-additive noise can be solved by applying convex optimization algorithms to the equivalent convex models for additive noise. Several numerical results are provided for denoising problems with Poisson noise or multiplicative noise.
\keywords{Denoising problems \and Non-additive noise \and Hamilton-Jacobi PDEs}
\end{abstract}

\section{Introduction}

Image denoising  is a fundamental topic in imaging sciences. In this paper, we  consider only digital images which lie in the space of matrices with $n_1$ rows and $n_2$ columns, identified with the Euclidean space $\R^n$ where $n=n_1n_2$ is the number of pixels.
Variational approaches for image denoising have been quite popular and successful \cite{vese2015}.
Given an observed digital image in $\Rn$, a variational approach provides a restored image by solving an optimization problem. For instance, the well-known Rudin–Osher–Fatemi (ROF) model~\cite{rudin1992nonlinear}  approximates
the desired image by the solution $\bar{\imageu}$ of 
\begin{equation}\label{eqt: intro_rof}
    \min_{\imageu\in\R^n}\left\{\tv(\imageu) + \frac{1}{2t}\|x-\imageu\|^2\right\},
\end{equation}
where $t$ is a positive parameter, $x$ is the observed image, $\|\cdot\|$ denotes the Euclidean norm in $\Rn$, and $\tv(\cdot)$ is a discrete total variation seminorm playing the role of a regularization (penalty) term. 
The data fidelity  $\frac{1}{2t}\|x-\imageu\|^2$ in~\eqref{eqt: intro_rof} corresponds to additive Gaussian noise that corrupts the original image. The positive parameter $t$ balances the data fidelity term and the regularization term, in order to ensure accurate and stable reconstructions.
In this paper, the function $\tv(\cdot)$ denotes the anisotropic total variation,
which is defined by
\begin{equation}
    \tv(\imageu) = \sum_{i=1}^{n_1-1}\sum_{j=1}^{n_2} |\imageu_{i+1,j} - \imageu_{i,j}| + \sum_{i=1}^{n_1}\sum_{j=1}^{n_2-1} |\imageu_{i,j+1} - \imageu_{i,j}|,
\end{equation}
where $\imageu$ is a two dimensional digital image with $n_1$ rows and $n_2$ columns, and $\imageu_{i,j}$ denotes its value on the pixel in the $i$-th row and $j$-th column.

Total variation is a widely used regularization term in imaging sciences.
Other than the  total variation, many regularization terms have been proposed in the literature for imaging applications, e.g., non-local total variation \cite{peyre2008} and  $\ell^1$-based  \cite{Chen_Donoho} penalizing functions. In general, a variational model for image denoising problems with additive Gaussian noise reads
\begin{equation}\label{eqt: intro_Gaussian_variation}
    \min_{\imageu\in\R^n}\left\{\prior(\imageu) + \frac{1}{2t}\|x-\imageu\|^2\right\},
\end{equation}
where $\prior$ encodes some knowledge of the image to reconstruct.

The connection between the variational formulation~\eqref{eqt: intro_Gaussian_variation} and certain Hamilton-Jacobi (HJ) PDE has been observed and studied in~\cite{Darbon15}. To be specific, under some assumptions on  $\prior$, the optimal value of this variational problem defined by
\begin{equation*}
    \R^n\times (0,+\infty)\ni (x,t)\mapsto S(x,t) := \min_{\imageu\in\R^n}\left\{\prior(\imageu) + \frac{1}{2t}\|x-\imageu\|^2\right\},
\end{equation*}
solves the following HJ PDE
\begin{equation*}
    \begin{dcases}
        \frac{\partial S}{\partial t}(x,t)+\frac{1}{2}\|\nabla_{x}S(x,t)\|^2 = 0, & \text{if } x\in \R^n, \,\, t>0,\\
        S(x,0)=\prior(x), & \text{if }x\in\mathbb{R}^{n}.
    \end{dcases}
\end{equation*}
The observed image $x$ and the parameter $t$ in the variational model~\eqref{eqt: intro_Gaussian_variation} correspond to the spatial variable and time variable in the HJ PDE, respectively.
Moreover, the minimizer $\bar{\imageu}$ in~\eqref{eqt: intro_Gaussian_variation} is related to the spatial gradient of the solution $S$ by
\begin{equation*}
    \bar{\imageu} = x - t\nabla_x S(x,t).
\end{equation*}

Note that the results in~\cite{Darbon15} also hold for a variational model involving general additive noise, i.e.,
\begin{equation}\label{eqt: intro_variational_additive}
    \min_{\imageu\in\R^n}\{\prior(\imageu) +  g\left(t, x - \imageu\right)\},
\end{equation}
where $J$ is the penalty, 
$g(t,x-u)$ is the data fidelity, and $t$ is a positive parameter.
If  $g(t, x - \imageu)$ can be written as $tH^*\left(\frac{x-\imageu}{t}\right)$ for some convex function $H$ (where $H^*$ denotes the Legendre-Fenchel transform of $H$), then the minimal value in~\eqref{eqt: intro_variational_additive}, denoted by $S(x,t)$, solves the following HJ PDE 
\begin{equation} \label{eqt: intro_HJ_S}
    \begin{dcases}
        \frac{\partial S}{\partial t}(x,t)+H(\nabla_{x}S(x,t)) = 0, & \text{if }x\in\R^n, \,\, t>0,\\
        S(x,0)=J(x), & \text{if }x\in\mathbb{R}^{n},
    \end{dcases}
\end{equation}
under some assumptions, where the initial condition is given by the penalty $J$ in~\eqref{eqt: intro_variational_additive}, and the Hamiltonian $H$ is related to the data fidelity $g$ in~\eqref{eqt: intro_variational_additive}. Moreover, the minimizer $\bar{\imageu}$ in~\eqref{eqt: intro_variational_additive} can be computed using the spatial gradient of the solution $S$ as
\begin{equation*}
    \bar{\imageu} = x - t\nabla H(\nabla_x S(x,t)).
\end{equation*}
Note that some variational models in the literature involving non-Gaussian additive noise can be expressed in the form of~\eqref{eqt: intro_variational_additive} and satisfy the requested assumptions. For instance, the generalized Gaussian noise~\cite{Bouman1993Generalized,Kassam1988Signal} corresponds to the Hamiltonian $H(z)= \frac{1}{\alpha}\|z\|^\alpha$ with $\alpha >1$.
Clearly, relations between PDEs and imaging models provide opportunities to connect the two research areas. Thus, techniques in one area can be applied to solving problems in the other area. For instance, recently, some optimization techniques have been applied to solve high dimensional HJ PDEs \cite{Darbon2016Algorithms}. The reader is referred also to \cite{darbon2019decomposition} for connections between multi-time HJ PDEs and image decomposition problems. 

A natural question that arises here is whether one can  connect variational formulations for   non-additive noise models with certain HJ PDEs. Unfortunately, the results in~\cite{Darbon15} can only handle additive noise or perturbation, and the analysis cannot be straightforwardly applied to the non-additive one. The latter  usually depends on the data, as opposed to the additive case, thus making the investigation more complicated.

In this paper, we propose an answer to this question by establishing links among variational models involving non-additive noise, additive noise, and certain HJ PDEs.

In general, a variational model for denoising problems with non-additive noise reads
\begin{equation}\label{eqt: intro_image_variational}
    \min_{\imageq\in\R^n}\{f(\imageq) + g(t,x,\imageq)\},
\end{equation}
where $f$ is the penalizing term, $g$ is the data fidelity, $t$ is a positive parameter, and $x$ denotes the observed image scaled by $t$.
In this paper, we consider the variational models~\eqref{eqt: intro_image_variational} whose scaled data fidelity part $\frac{1}{t}g(t,x,v)$ can be written as the Bregman distance $D_{H^*}\left(\frac{x}{t}, \imageq\right)$ with respect to a Legendre function $H^*$, and the regularization term $f(v)$ can be written as $J^*(\nabla H^*(\imageq))$ for some convex function $J$, where $H^*$ and $J^*$ are the Legendre-Fenchel transforms of convex functions $H$ and $J$, respectively. 
In other words, we consider the variational formulations which can be expressed as
\begin{equation} \label{eqt: mainresult_var1}
     \min_{\imageq\in\interior\dom H^*} \left\{ J^*(\nabla H^*(\imageq)) + tD_{H^*}\left(\frac{x}{t}, \imageq\right)\right\}.
\end{equation}
This formulation is not as restrictive as it seems to be. 
In general, if a variational model is derived from the Maximum A Posteriori (MAP) estimator, then its data fidelity can be written in the form of $tD_{H^*}\left(\frac{x}{t}, \imageq\right)$ if the likelihood function belongs in a certain class of exponential family. The parameter $t$ is related to the parameter in the exponential family, which has physical meanings for the two models considered in this paper (see Remark~\ref{rem:meaningt_poisson} for the Poisson noise and Remark~\ref{rem:meaningt_multiplicative} for the multiplicative noise). For instance, when $H$ is a quadratic term,~\eqref{eqt: mainresult_var1} corresponds to the variational model for Gaussian noise with regularization term $J^*$. Therefore, the variational problem~\eqref{eqt: mainresult_var1} considered in this paper is a commonly used model in imaging sciences and statistical learning, which includes also the Poisson noise   and the multiplicative noise instances when $D_{H^*}\left(\frac{x}{t}, \imageq\right)$ is the Kullback-Leibler distance and the Itakura-Saito distance, respectively.  In the sequel, we will provide a brief literature review of variational models involving Poisson  noise and multiplicative noise.

The former type of noise is usually considered in the low light environment  such as  in medical imaging, astronomical imaging and microscopy. In this case, the data fidelity in~\eqref{eqt: intro_image_variational} is usually chosen to be
\begin{equation}\label{eqt: intro_fidelity_Poisson}
    g(t,x,\imageq) =t \sum_{i=1}^n \left(\imageq_i - \frac{x_i}{t}\log \imageq_i\right), \quad \forall x=(x_1,\dots, x_n)\in\Rn, \imageq = (\imageq_1,\dots, \imageq_n)\in\Rn_+, t>0.
\end{equation}
The reconstructed image is the minimizer $\bar{\imageq}$ of the variational model~\eqref{eqt: intro_image_variational} with this data-fitting term, where $\frac{x}{t}$ is the observed image and $t$ is the exposure time of the sensor (see Remark~\ref{rem:meaningt_poisson}).
There are plenty of approaches for denoising problems involving Poisson noise. For instance, one can perform some transformations  that reduce Poisson noise to  Gaussian noise -  see, e.g., the Ascombe transform  \cite{anscombe}.  However, such a procedure has its limitations, as it can be efficiently applied only in the case of high signal-to-noise ratio. The reader is referred also to \cite{giryes_elad} which proposes a remedy in the situation of low signal-to-noise ratio.  Iterative methods for denoising and deblurring  images perturbed by Poisson noise can be found in \cite{bertero_2009,poisson_primal_dual}. See also \cite{benning_phd,jung_etal} for  more general settings in infinite dimension.
Since our focus is on variational models, we recall that quite popular  formulations are the ones based on a total variation penalty  (e.g. \cite{le_chartrand_asaki}) and  on an $\ell^1$-norm (see \cite{Figueiredo10}).   The former model seems to be unstable when dealing with very low intensity values or when choosing the regularization parameter via cross-validation. Thus, a series of papers \cite{poisson_logtv1,poisson_mult_logtv} advocate for employing the so-called logarithmic total variation regularizer, that is the total variation applied to the logarithm. This model overcomes the shortcomings mentioned above, but displays instability and slow convergence when addressing high intensity levels, thus prompting research on hybrid penalties, cf. \cite{poisson_logtv2}. Our study will also consider the logarithmic total variation penalty, as it naturally arises in the context.
 
Multiplicative noise models  have been often employed in Synthetic-Aperture Radar (SAR),  ultrasound,  and laser imaging problems.  
As mentioned above, the Itakura-Saito distance
\begin{equation}\label{eqt: intro_fidelity_multiplicative}
    g(t,x,\imageq) = t\sum_{i=1}^n \left(\log \imageq_i +\frac{x_i}{t\imageq_i}\right), \quad \forall x=(x_1,\dots, x_n)\in\Rn, \imageq = (\imageq_1,\dots, \imageq_n)\in\Rn_+, t>0,
\end{equation}
appears as a natural data-fitting term when assuming a Gamma distribution for the noise (see also details in section~\ref{sec: multiplicative}). The reconstructed image is the minimizer $\bar{\imageq}$ of the variational model~\eqref{eqt: intro_image_variational} with this data fidelity, where $\frac{x}{t}$ is the observed image and $t$ is the number of observations (see Remark~\ref{rem:meaningt_multiplicative}). The nonconvex variational approach consisting of the weighted sum between this data fidelity and the total variation seminorm as a penalty has been proposed in the seminal work \cite{Aubert2008Variational}. This motivated some authors to introduce an exponential transform and additional data-fitting terms in order to obtain convex models, cf. \cite{Shi2008Nonlinear,huang_2009,Jin2010Analysis,dong_2013}.
The paper \cite{rudin_lions_osher} proposed removing the multiplicative noise  by  minimizing the total variation seminorm subject to
constraints on the  mean and the variance of the observed data. They employed an alternative data fidelity, namely
\begin{equation}\label{eqt: intro_fidelity_multiplicative1}
    g(t,x,\imageq) = \sum_{i=1}^n \left(\frac{x_i}{t\imageq_i}-1\right)^2, \quad \forall x\in\Rn, \imageq\in\Rn_+, t>0.
\end{equation}
Actually, the following  seems  to be quite a general data fidelity in the  variational denoising problem with multiplicative noise, according to \cite{Shi2008Nonlinear}:
\begin{equation}\label{eqt: intro_fidelity_multiplicative2}
    g(t,x,\imageq) = \sum_{i=1}^n \left(a\frac{x_i}{t\imageq_i}+\frac{b}{2}\left(\frac{x_i}{t\imageq_i}\right)^2 +c\log v_i\right), \quad \forall x\in\Rn, \imageq\in\Rn_+, t>0,
\end{equation}
where $a,b,c$ are real parameters.
Further interesting approaches can be found in \cite{steidl_teuber}  and  \cite{log_additive}. The former  applied a Douglas-Rachford algorithm for minimizing a functional based on the Kullback-Leibler divergence combined with total variation (hence,  appropriate for a Poisson model as well), while the latter   transformed the multiplicative model into an additive one,  by using a logarithm transform (see also \cite{nikolova}).

\textbf{Contributions.} 
In this paper, we provide novel connections between the variational model~\eqref{eqt: mainresult_var1} for denoising problems with non-additive noise and the following variational model for additive noise
\begin{equation}\label{eqt: mainresult_var2}
     \min_{\imageq\in\R^n} \{J(x-t\imageq) + tH^*(\imageq)\},
\end{equation}
an essential role being played by Fenchel duality.
Our contribution is two-fold.
\begin{itemize}
    \item Theoretically, our results link these models for denoising problems to certain HJ PDEs, thus
    opening  the door to new numerical algorithms solving HJ PDEs or denoising problems.
    On the one hand, all algorithms designed for solving the minimization problem~\eqref{eqt: mainresult_var1} can be used to solve the related HJ PDE in very high dimensions. Recall that it is generally hard to solve such problems using a numerical solver in the numerical PDE field since the dimension is high and the Hamiltonian has state and time dependency. On the other hand, if some algorithms are designed for solving the related HJ PDE efficiently, then the algorithm could be applied to solve the variational problem~\eqref{eqt: mainresult_var1}.
    \item Numerically, we provide algorithms for solving the variational model~\eqref{eqt: mainresult_var1} which may be non-convex.
    Note that the variational model~\eqref{eqt: mainresult_var1} includes denoising models for a large class of non-additive noise such as Poisson noise and multiplicative noise. 
    Therefore, our results provide numerical solvers for variational denoising models involving a large class of non-additive noise with certain non-convex regularization terms.
\end{itemize}

\textbf{Organization of the paper.} In section~\ref{sec: math_bkgd}, we introduce the notations which are used in the sequel.
In section~\ref{sec: main_results}, we present the main results in this paper. To be specific, we state the relation among the two minimization problems~\eqref{eqt: mainresult_var1},~\eqref{eqt: mainresult_var2} and two HJ PDEs.
In section~\ref{sec: Moreau}, we establish a generalized Moreau identity, which is then used to show the equivalence of the two variational formulations~\eqref{eqt: mainresult_var1} and~\eqref{eqt: mainresult_var2} under certain assumptions. In section~\ref{sec: HJPDE}, the connection of the variational models and HJ PDEs is proved, and more properties of the HJ PDEs are studied.
Then, we adapt the theoretical framework to two examples involving Poisson noise in section~\ref{sec: Poisson}, and multiplicative noise in section~\ref{sec: multiplicative}. 
An asymptotic result of the variational models is shown in section~\ref{sec:asymptotic}, which proves the convergence of the minimizer as the parameter $t$ goes to infinity under some assumptions.
Robust algorithms for 
the non-convex model~\eqref{eqt: mainresult_var1} in the cases of Poisson and multiplicative noise are proposed and tested in section \ref{sec:numerics} via solving the associated convex problem~\eqref{eqt: mainresult_var2}.

\section{Mathematical background} \label{sec: math_bkgd}
We summarize in Table~\ref{tab:notations} the notations we will use in this paper. For more details on the related basic notions and results, we refer the reader to Appendix~\ref{appendix:bkgd}.

\newcolumntype{s}{>{\hsize=.7\hsize}X}
\begin{table}[!ht]
\centering
 \caption{Notations used in this paper. Here, we use $C$ to denote a set in $\R^n$, $f$ and $g$ to denote functions from $\R^n$ to $\R\cup\{+\infty\}$ and $x,y,p,d,u$ to denote vectors in $\Rn$.}
 \label{tab:notations}
\begin{tabularx}{\textwidth}{ l|s|X }
\hline
\noalign{\smallskip}
 Notation & Meaning & Definition (under certain assumptions)\\
 \noalign{\smallskip}
 \hline
 \noalign{\smallskip}

 $\left\langle \cdot,\cdot\right\rangle $ & 
 Euclidean scalar product in $\mathbb{R}^{n}$ & $\langle x, y\rangle \coloneqq \sum_{i=1}^n x_iy_i$
 \\
 $\left\Vert \cdot\right\Vert$ &
 Euclidean norm in $\mathbb{R}^{n}$ & $\left\Vert x\right\Vert \coloneqq \sqrt{\langle x, x\rangle}$
 \\
 $\ri C$ & Relative interior of $C$ & The interior of $C$ with respect to the minimal hyperplane containing $C$ in $\R^n$
 \\
 $\dom f$ & Domain of $f$ & $\{x\in \R^n\colon f(x) < +\infty\}$
 \\
 $\gmRn$ & A useful and standard class of convex functions & The set containing all proper, convex, lower semicontinuous functions from $\R^n$ to $\R\cup\{+\infty\}$
 \\
 $\partial f(x)$ & Subdifferential of $f$ at $x$ & $\{p\in\Rn:\ f(y)\geq f(x) + \langle p, y-x\rangle \ \forall y\in\Rn\}$
 \\
 $f^*$ & Legendre-Fenchel transform of $f$ & $f^*(p) \coloneqq \sup_{x\in \Rn} \{\langle p, x\rangle - f(x)\}$
 \\
 $f\square g$ & Inf-convolution of $f$ and $g$ &
 $f\square g(x) := \inf_{u\in\R^n} \{f(u) + g(x-u)\}$
 \\
 $f'_\infty$ & The asymptotic function of $f$ &
 $f'_\infty(d) := \sup_{s>0} \left\{\frac{1}{s}(f(x_0 + sd) - f(x_0))\right\}$
 \\
 $d_\fBreg(x,p)$ & primal-dual Bregman distance with respect to $\fBreg$ & $d_\fBreg(x,p) := \fBreg(x) + \fBreg^*(p) - \langle p,x\rangle$
 \\
 $D_\fBreg(x,u)$ & (primal-primal) Bregman distance with respect to $\fBreg$ & $D_\fBreg(x,u):= \fBreg(x)- \fBreg(u) - \langle \nabla \fBreg(u),x-u\rangle$
 \\
 \noalign{\smallskip}
\hline
 \end{tabularx}
\end{table}

\section{Main results}\label{sec: main_results}
In this section, we summarize our theoretical results. We adopt the following assumption
\begin{itemize}
    \item[(A1)] Assume $J$ and $H$ are functions in $\gmRn$. Further assume that $J$ is 1-coercive, and $H$ is a Legendre function.
\end{itemize}

Under this assumption, there are some properties of the functions $J$ and $H$ which will be repeatedly used later. We collect these properties in the following remark.
\begin{remark}\label{rem: properties_HandJ}
Since $J$ is convex, lower semi-continuous and 1-coercive, then by Proposition~\ref{prop: bkgd_finite_coercive}, $J^*$ is convex, lower semi-continuous with $\dom J^* = \R^n$. 
The function $H^*$ is also a Legendre function by Proposition~\ref{prop: bkgd_Legendre}. Then, $H$ and $H^*$ are differentiable in the interior of their domains. Moreover, if $x\in\partial H(p)$ or $p\in\partial H^*(x)$ holds for some $x,p\in\Rn$, then we have $p\in\interior \dom H$, $x\in\interior\dom H^*$, $x=\nabla H(p)$ and $p=\nabla H^*(x)$ by definition of Legendre functions.
\end{remark}

We consider the minimization problem~\eqref{eqt: mainresult_var1}, 
where $D_{H^*}$ denotes the Bregman distance with respect to $H^*$ ($H^*$ stands for the Legendre-Fenchel transform of $H$), and $t$ is a positive parameter.
We show the connection between  problem~\eqref{eqt: mainresult_var1} and the variational model~\eqref{eqt: mainresult_var2} which is commonly used in denoising problems with additive noise.
We also investigate the connections of these variational problems with two HJ PDEs. The illustration of our main results are shown in Figure~\ref{fig: illustration}. 

\begin{figure}[ht]
\includegraphics[width=\textwidth]{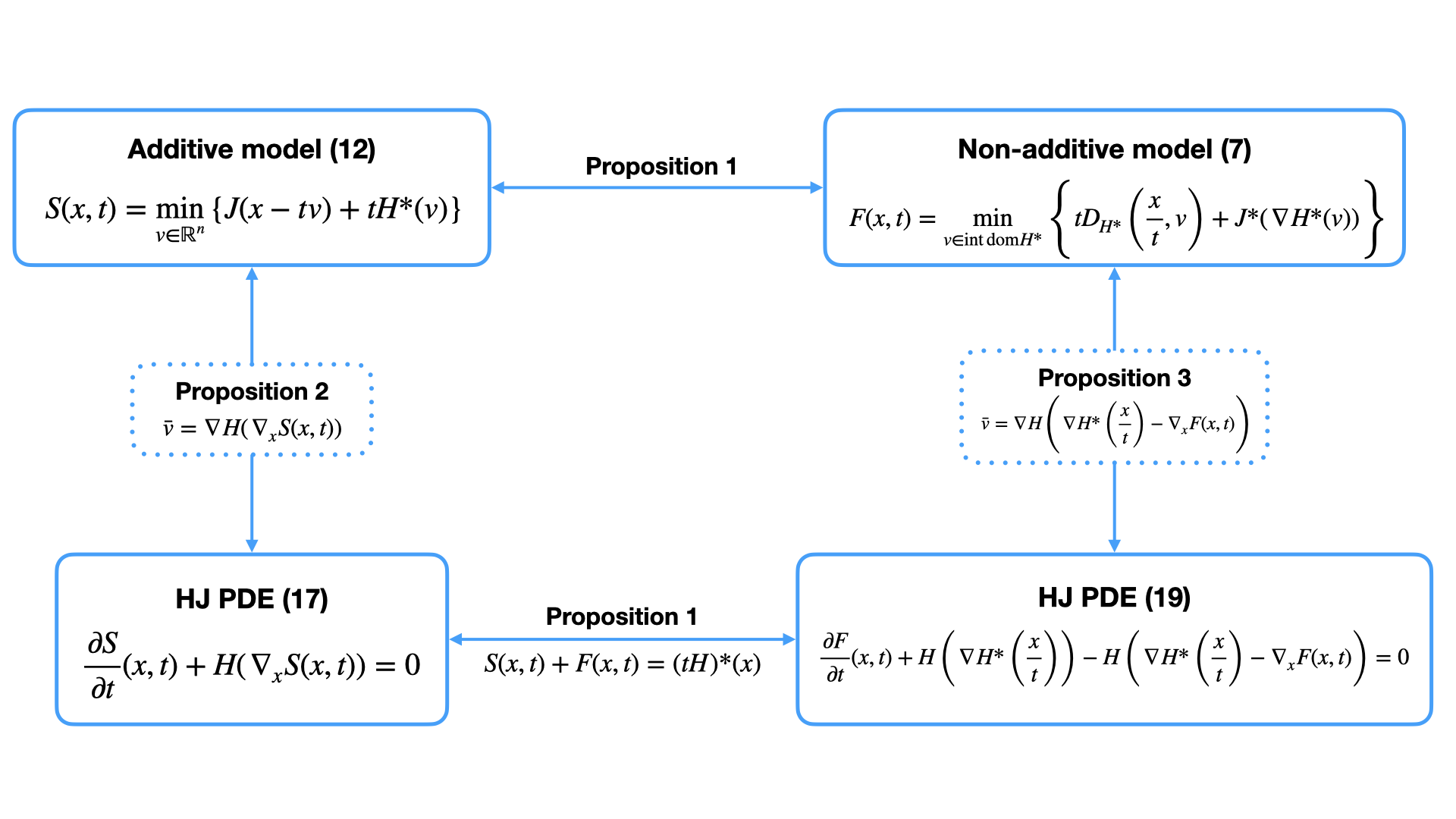}
\caption{An illustration of the relations among the non-additive model~\eqref{eqt: mainresult_var1}, the additive model~\eqref{eqt: mainresult_var2} and the corresponding PDEs~\eqref{eqt: mainresult_HJ_S} and~\eqref{eqt: mainresult_HJ_F}. \label{fig: illustration}}
\end{figure}

The following result shows that under assumption (A1), the minimization problems~\eqref{eqt: mainresult_var1} and~\eqref{eqt: mainresult_var2} are equivalent. The proof and more generalized results are provided in section~\ref{sec: Moreau}.

\begin{proposition} \label{prop: mainresult_Moreau}
Assume that (A1) holds. Let $x\in \R^n$ and $t>0$ satisfy
\begin{equation*}
    x\in \left(\dom J + t\,\interior\dom H^*\right) \cap \left(t\,\dom H^*\right).
\end{equation*}
Then, the minimizer in the minimization problems~\eqref{eqt: mainresult_var1} and~\eqref{eqt: mainresult_var2} exists and is unique. Also, $\bar{\imageq}$ is the minimizer in~\eqref{eqt: mainresult_var1} if and only if it is the minimizer in~\eqref{eqt: mainresult_var2}. In other words, we have
\begin{equation}\label{eqt: prop31_equal_minimizer}
    \argmin_{\imageq\in\,\interior \dom H^*} \left\{tD_{H^*}\left(\frac{x}{t}, \imageq\right) + J^*(\nabla H^*(\imageq))\right\} = \argmin_{\imageq\in\R^n} \{J(x-t\imageq) + tH^*(\imageq)\}.
\end{equation}
Moreover, the minimal values in the two problems~\eqref{eqt: mainresult_var1} and~\eqref{eqt: mainresult_var2} satisfy
    \begin{equation}\label{eqt: prop31_minimal_value}
        \min_{\imageq\in\,\interior \dom H^*} \left\{tD_{H^*}\left(\frac{x}{t}, \imageq\right) + J^*(\nabla H^*(\imageq))\right\}
        + 
        \min_{\imageq\in\R^n} \{J(x-t\imageq) + tH^*(\imageq)\}
        = (tH)^*(x).
    \end{equation}
\end{proposition}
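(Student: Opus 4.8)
The plan is to derive everything from Fenchel--Rockafellar duality applied to the additive model~\eqref{eqt: mainresult_var2}. First I would rewrite the objective in~\eqref{eqt: mainresult_var2} as $\phi(\imageq) + \psi(A\imageq)$ for a suitable linear map $A$ and convex functions $\phi,\psi$; the natural choice is to set the "data" part $\imageq \mapsto J(x - t\imageq)$ against the "regularizer" $tH^*(\imageq)$. Its Fenchel dual then involves the conjugates of these two pieces: using $\dom J^* = \R^n$ (from Remark~\ref{rem: properties_HandJ}, since $J$ is $1$-coercive) and the scaling identities $(tH^*)^*(\cdot) = tH(\cdot/t)$ and the conjugate of $\imageq \mapsto J(x-t\imageq)$, one computes that the dual problem is, up to sign and an affine shift by $x$, exactly the problem of minimizing $\imageq \mapsto tD_{H^*}(x/t,\imageq) + J^*(\nabla H^*(\imageq))$ over $\interior\dom H^*$. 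The key algebraic fact that makes the Bregman distance appear is $tD_{H^*}(x/t,\imageq) = tH^*(x/t) + tH(\nabla H^*(\imageq)) - \langle x, \nabla H^*(\imageq)\rangle$ for $\imageq \in \interior\dom H^*$, which rewrites the dual objective once one substitutes the relation between the dual variable and $\nabla H^*$.

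Concretely, the key steps in order are: (i) check the qualification condition for strong duality --- this is where the hypothesis $x \in (\dom J + t\,\interior\dom H^*) \cap (t\,\dom H^*)$ is used: the intersection condition $x \in \dom J + t\,\interior\dom H^*$ guarantees a point of relative-interior overlap so that Fenchel duality holds with no gap and the dual is attained, and $x \in t\,\dom H^*$ ensures the primal objective of~\eqref{eqt: mainresult_var1} is finite somewhere; (ii) write down the dual of~\eqref{eqt: mainresult_var2} explicitly and identify it with~\eqref{eqt: mainresult_var1} via the substitution $\imageq_{\text{dual}} \leftrightarrow \nabla H^*(\imageq_{\text{primal of }\eqref{eqt: mainresult_var1}})$, invoking the Legendre property of $H$ so that $\nabla H^*$ is a bijection between $\interior\dom H^*$ and $\interior\dom H$; (iii) read off the value identity: strong duality gives (primal value of~\eqref{eqt: mainresult_var2}) $=$ $-$(dual value) plus the constant that drops out of the conjugation, and collecting the constant terms produces exactly $(tH)^*(x)$, which is~\eqref{eqt: prop31_minimal_value}; (iv) for~\eqref{eqt: prop31_equal_minimizer}, use the primal--dual optimality conditions (the subdifferential inclusions linking a primal optimizer of~\eqref{eqt: mainresult_var2} to a dual optimizer) to check that $\bar{\imageq}$ solves~\eqref{eqt: mainresult_var2} iff it solves~\eqref{eqt: mainresult_var1}; (v) uniqueness follows because $tH^*$ is strictly convex on $\interior\dom H^*$ (Legendre), so~\eqref{eqt: mainresult_var2} has at most one minimizer, and the established equivalence transfers this to~\eqref{eqt: mainresult_var1}.

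The main obstacle I expect is step (i)--(ii): being careful about domains and the exact form of the qualification condition. The conjugate of $\imageq \mapsto J(x - t\imageq)$ is $\imageq \mapsto J^*(-\imageq/t) + \langle x, \imageq\rangle/t$ (or a variant depending on sign conventions), and one must track these affine and scaling factors through to see the Bregman distance emerge cleanly rather than as a messy combination; a sign error here propagates into the wrong constant in~\eqref{eqt: prop31_minimal_value}. Equally delicate is verifying that the dual optimizer automatically lies in $\interior\dom H^*$ rather than merely $\dom H^*$ --- this is where one leans on the Legendre structure (via Remark~\ref{rem: properties_HandJ}: any subgradient relation forces membership in the interior) so that $\nabla H^*$ is well-defined at the optimizer and the change of variables in~\eqref{eqt: prop31_equal_minimizer} is legitimate. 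I would handle this by first proving a clean general inf-convolution/Moreau-type identity (as the paper signals it does in section~\ref{sec: Moreau}) and then specializing, rather than doing the duality computation by hand each time; but even granting that, matching the constant to $(tH)^*(x)$ requires one genuine computation with the definition of the primal-dual Bregman distance $d_{H^*}$.
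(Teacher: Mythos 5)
Your proposal is correct and follows essentially the same route as the paper: the paper first proves exactly the Fenchel-duality/generalized-Moreau identity you describe (its Proposition~\ref{proposition: existence_uniqueness}, established via classical Fenchel duality with the same qualification condition $x\in\dom J + t\,\interior\dom H^*$ and with the Legendre property of $H$ forcing the optimizers into the interiors of the domains), and then derives Proposition~\ref{prop: mainresult_Moreau} by the change of variables $p=\nabla H^*(\imageq)$ together with the Bregman computation $tD_{H^*}\left(\frac{x}{t},\imageq\right)+J^*(\nabla H^*(\imageq)) = tH(p)-\langle p,x\rangle+J^*(p)+tH^*\left(\frac{x}{t}\right)$, exactly as you outline. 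The constant you anticipate indeed comes out as $(tH)^*(x)=tH^*\left(\frac{x}{t}\right)$, and uniqueness is obtained, as in your step (v), from strict convexity on the interior once the optimality conditions place the minimizers there.
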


Then, we show the relation of  problems~\eqref{eqt: mainresult_var1} and~\eqref{eqt: mainresult_var2} to two HJ PDEs. 
Generally speaking, the minimal value in~\eqref{eqt: mainresult_var2}, denoted by $S(x,t)$, solves a variant of the HJ PDE~\eqref{eqt: intro_HJ_S}, and the minimal value in~\eqref{eqt: mainresult_var1}, denoted by $F(x,t)$, satisfies another HJ PDE under some assumptions.
Now, we define the functions $S$ and $F$ rigorously. 
The function $S\colon \R^n\times \R\to\R\cup\{+\infty\}$ is defined by
\begin{equation} \label{eqt: mainresult_defS}
    S(x,t) := 
    \begin{dcases}
    \inf_{\imageq\in\R^n} \{J(x-t\imageq) + tH^*(\imageq)\}, &\text{if }x\in\R^n, t>0,\\
    (J^* + \ind_{\dom H})^*(x), &\text{if }x\in\R^n, t=0,\\
    +\infty, &\text{if }x\in\R^n, t<0.
    \end{dcases}
\end{equation}
Moreover, when $0\in \dom J$ holds, we define $F\colon \domF\to \R$ by 
\begin{equation} \label{eqt: mainresult_defF}
    F(x,t) := 
    \begin{dcases}
    \inf_{\imageq\in\,\interior \dom H^*} \left\{tD_{H^*}\left(\frac{x}{t}, \imageq\right) + J^*(\nabla H^*(\imageq))\right\}, &\text{if }t>0,\,\, (x,t)\in \domF,\\
    \ind_{\dom H}^*(x) - (J^* + \ind_{\dom H})^*(x), &\text{if }t = 0,\,\, (x,t)\in\domF,
    \end{dcases}
\end{equation}
where the domain $\domF\subseteq \R^n\times [0,+\infty)$ is defined by
\begin{equation*}
    \domF:= \left\{(x,t)\in\R^n\times (0,+\infty)\colon \frac{x}{t}\in\interior\dom H^*\right\}\bigcup
    (\dom (\ind_{\dom H}^*)\times \{0\}).
\end{equation*}

The following two propositions provide the connections of $S$ and $F$ to two HJ PDEs, respectively. These propositions also provide the representation formula of the minimizer $\bar{\imageq}$ in~\eqref{eqt: mainresult_var1} and~\eqref{eqt: mainresult_var2} using the spatial gradients of the functions $S$ and $F$.
Note that Proposition~\ref{prop: mainresult_HJS} is a generalization of the results in~\cite{Darbon15}, which considers the case when the domain of $H$ is the whole space $\Rn$.
This generalization is needed since the domain of definition of the function $H$ is not $\Rn$ for the Poisson and multiplicative noise 
settings.
The proofs of the following two propositions and more properties of $S$ and $F$ are provided in section~\ref{sec: HJPDE}.

\begin{proposition} \label{prop: mainresult_HJS}
Assume that (A1) holds. Let $S\colon \R^n\times \R\to\R\cup\{+\infty\}$ be the function defined in~\eqref{eqt: mainresult_defS}. Let $\icHJS$ be a function in $\gmRn$ whose Legendre-Fenchel transform $\icHJS^*$ satisfies $\dom \icHJS^* = \cl\dom H$ and $\icHJS^* = J^*$ in the domain of $\icHJS^*$.
Then, $S$ solves the following HJ PDE 
    \begin{equation} \label{eqt: mainresult_HJ_S}
        \begin{dcases}
            \frac{\partial S}{\partial t}(x,t)+H(\nabla_{x}S(x,t)) = 0, & \text{if }(x,t)\in \interior\dom S,\\
            S(x,0)=\icHJS(x), & \text{if }x\in\mathbb{R}^{n}.
        \end{dcases}
    \end{equation}
Moreover, the minimizer $\bar{\imageq}$ in~\eqref{eqt: mainresult_var2} satisfies
\begin{equation}\label{eqt: mainresult_vbar_S}
    \bar{\imageq} = \nabla H(\nabla_x S(x,t)),
\end{equation}
for all $t>0$ and $x\in \dom J + t\,\interior\dom H^*$.
\end{proposition}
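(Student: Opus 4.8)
The plan is to reduce the statement to the classical Hopf--Lax / Hamilton--Jacobi theory for convex data, using the inf-convolution structure of $S$ together with Proposition~\ref{prop: mainresult_Moreau} and Remark~\ref{rem: properties_HandJ}. First I would observe that for $t>0$ the definition~\eqref{eqt: mainresult_defS} can be rewritten, after the substitution $u = x - t\imageq$, as
\begin{equation*}
    S(x,t) = \inf_{u\in\R^n}\left\{ J(u) + tH^*\!\left(\frac{x-u}{t}\right)\right\} = \big(J \,\square\, (tH)^*\big)(x),
\end{equation*}
since $(tH)^*(y) = tH^*(y/t)$. Thus $S(\cdot,t)$ is the inf-convolution of the $1$-coercive convex function $J$ with the proper convex function $(tH)^*$; by the standard calculus of Legendre--Fenchel transforms (Appendix results on inf-convolution), $S(\cdot,t)^* = J^* + (tH)^{**} = J^* + \ind_{\cl\dom H}\!\cdot$ — more precisely $S(\cdot,t)^* = J^* + tH$ on the appropriate domain, and the $1$-coercivity of $J$ guarantees the infimum is attained and that $S(\cdot,t)$ is finite, convex, lower semicontinuous. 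Matching $t\to 0^+$ against the stated initial datum $S(x,0) = \icHJS(x)$ then amounts to identifying the limit of $J\,\square\,(tH)^*$ as $t\downarrow 0$ with $\icHJS = (J^* + \ind_{\cl\dom H})^*$, which is exactly the $t=0$ branch of~\eqref{eqt: mainresult_defS}; here I would use that $\icHJS^* = J^* + \ind_{\cl\dom H}$ by hypothesis, so $\icHJS = S(\cdot,0)$ by biconjugation.

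Next I would establish the PDE on $\interior\dom S$. The cleanest route is via the conjugate: for fixed $x$ in the interior, differentiate the identity coming from Fenchel duality. Writing $S(x,t) = \sup_{p}\{\langle p,x\rangle - J^*(p) - tH(p)\}$ (valid by the conjugacy computation above, since $(J^*+tH)^* = J\,\square\,(tH)^*$ when $J$ is $1$-coercive so that the sup is attained at a unique $\bar p = \nabla_x S(x,t) \in \interior\dom H$), I can read off both partial derivatives from the envelope theorem / Danskin-type differentiation: the unique maximizer $\bar p(x,t)$ is $C^0$, $\nabla_x S(x,t) = \bar p(x,t)$, and $\partial_t S(x,t) = -H(\bar p(x,t)) = -H(\nabla_x S(x,t))$, which is precisely~\eqref{eqt: mainresult_HJ_S}. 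Differentiability of $S$ in $x$ comes from strict convexity of $p\mapsto J^*(p)+tH(p)$ on the relevant set (strict convexity of $H$ as a Legendre function on $\interior\dom H$) guaranteeing uniqueness of $\bar p$, hence single-valuedness of $\partial_x S$; differentiability in $t$ is the analogous statement for the perturbation in $t$. One must check $\bar p\in\interior\dom H$ so that $H$ is differentiable there — this follows from Remark~\ref{rem: properties_HandJ} because the optimality condition gives $\bar p\in\partial((tH)^*)(\cdot)$ type membership forcing $\bar p$ into the interior (Legendre property).

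For the representation~\eqref{eqt: mainresult_vbar_S} of the minimizer: by Proposition~\ref{prop: mainresult_Moreau} the minimizer $\bar\imageq$ of~\eqref{eqt: mainresult_var2} is unique for $x\in\dom J + t\,\interior\dom H^*$, and it is characterized by the first-order condition $0\in -tJ'(x - t\bar\imageq)\cdot(\text{something}) + t\partial H^*(\bar\imageq)$; carrying out the subdifferential calculus for $\imageq\mapsto J(x-t\imageq) + tH^*(\imageq)$ gives $0\in -t\,\partial J(x - t\bar\imageq) + t\,\nabla H^*(\bar\imageq)$, i.e. $\nabla H^*(\bar\imageq) \in \partial J(x - t\bar\imageq)$. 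On the other hand, the maximizer in the dual problem satisfies $\bar p = \nabla_x S(x,t)$ and the primal-dual optimality relations give $x - t\bar\imageq \in \partial J^*(\bar p)$ and $\bar\imageq = \nabla H(\bar p)$ (the latter from $\bar p\in\partial H^*(\bar\imageq)$ and the Legendre property, cf. Remark~\ref{rem: properties_HandJ}). Combining, $\bar\imageq = \nabla H(\bar p) = \nabla H(\nabla_x S(x,t))$.

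I expect the main obstacle to be the careful bookkeeping of domains: one must verify that the optimal dual variable $\bar p$ lands in $\interior\dom H$ (not merely $\dom H$) so that $H$ and $\nabla H$ make sense at $\bar p$, and that the hypothesis on $(x,t)$ — namely $x\in\dom J + t\,\interior\dom H^*$, equivalently $(x,t)\in\interior\dom S$ in the relevant regime — is exactly what makes the subgradient inclusions collapse to gradient equalities. The boundary behavior at $t=0$, i.e. proving $S$ is continuous up to $t=0$ with the claimed datum $\icHJS$, also requires a short epi-convergence or monotone-limit argument for $J\,\square\,(tH)^*$ as $t\downarrow 0$, using $1$-coercivity of $J$ to keep the infimum attained and the minimizers bounded.
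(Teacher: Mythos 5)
Your overall skeleton is close in spirit to the paper's: the paper also works through convex duality, but it does so by computing the \emph{joint} conjugate $S^*(p,E^-)=J^*(p)+\ind_{\Vconstraint}(p,E^-)$ with $\Vconstraint=\{(p,E^-)\in(\dom H)\times\R\colon E^-+H(p)\le 0\}$ and the normal cones of $\Vconstraint$ (Proposition~\ref{prop: HJPDE}(a)--(c)), concluding that $\partial S(x,t)$ is the singleton $\{(\pbar,-H(\pbar))\}$ at every $(x,t)\in\interior\dom S$, whence joint differentiability and the PDE; the minimizer formula then follows from Proposition~\ref{proposition: existence_uniqueness} exactly as in your last step, which is fine. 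Your per-variable substitute (conjugate in $x$ plus a Danskin/envelope argument in $t$) could be made to work, but as written it has genuine gaps precisely at the points where the paper's joint-conjugate computation does the work.

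Concretely: (i) attainment of the dual supremum $\sup_p\{\langle p,x\rangle-J^*(p)-tH(p)\}$ is \emph{not} a consequence of the $1$-coercivity of $J$ (that only gives attainment in the primal infimum); it requires $x\in\dom J+t\,\interior\dom H^*$ and a Fenchel-duality argument as in Proposition~\ref{proposition: existence_uniqueness} --- see Remark~\ref{rem: weaker_assumption_x} for a counterexample without the interior condition. Moreover the PDE is claimed on all of $\interior\dom S$, so you also need the identification $\interior\dom S=\bigcup_{t>0}(\dom J+t\,\interior\dom H^*)\times\{t\}$, which you assert but which requires proof (this is Proposition~\ref{prop: HJPDE}(b)). (ii) The time derivative is the real gap: single-valuedness of $\partial_x S(\cdot,t)$ gives $\nabla_x S(x,t)=\pbar$, but $\frac{\partial S}{\partial t}(x,t)=-H(\pbar)$ does not follow from uniqueness of the maximizer alone; the affine minorant $s\mapsto\langle\pbar,x\rangle-J^*(\pbar)-sH(\pbar)$ only yields the one-sided bounds $\phi'_-(t)\le -H(\pbar)\le\phi'_+(t)$, so to close the argument you must prove continuity (or at least local boundedness and convergence) of the maximizer map $(y,s)\mapsto\bar p(y,s)$, which you assert without proof; the paper instead computes $\partial S(x,t)$ directly from the normal cone of $\Vconstraint$, which also rules out maximizers on the boundary of $\dom H$ where $H$ is not differentiable. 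Once both partials are in hand, you should also invoke the standard fact that a convex function having all partial derivatives at an interior point is differentiable there, so that ``$\nabla_x S$'' and ``$\partial S/\partial t$'' in the PDE refer to an actual gradient. (iii) For the initial condition, the $t=0$ branch of~\eqref{eqt: mainresult_defS} is $(J^*+\ind_{\dom H})^*$, so before biconjugation you need the small extra step $(J^*+\ind_{\dom H})^*=(J^*+\ind_{\cl\dom H})^*$ (the lsc hull of $J^*+\ind_{\dom H}$ is $J^*+\ind_{\cl\dom H}$ because $J^*$ is finite everywhere); on the other hand, no limit as $t\to0^+$ is needed for this proposition, since $S(x,0)$ is defined by the formula, so your proposed epi-convergence argument is superfluous here.
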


\begin{proposition} \label{prop: mainresult_HJF}
Assume that (A1) holds, and also assume $0\in \dom J$. 
Let $F$ be the function defined by~\eqref{eqt: mainresult_defF}.
Then, in the interior of its domain $\domF$, $F$ is continuously differentiable and satisfies the following differential equation
\begin{equation}\label{eqt: mainresult_HJ_F}
    \frac{\partial F}{\partial t}(x,t)
    + H\left(\nabla H^*\left(\frac{x}{t}\right)\right) - H\left(\nabla H^*\left(\frac{x}{t}\right) - \nabla_x F(x,t)\right) = 0, \quad\forall (x,t)\in \interior\dom F.
\end{equation}
Also, $F$ satisfies
\begin{equation}\label{eqt: mainresult_HJ_F_continuity}
    \lim_{t\to 0^+} F(x+td, t)= F(x,0), \quad \forall d\in\interior\dom H^*, \,x\in \dom (\ind_{\dom H}^*).
\end{equation}
Moreover, the minimizer $\bar{v}$ in~\eqref{eqt: mainresult_var1} satisfies
\begin{equation}\label{eqt: mainresult_barv_F}
    \bar{v} = \nabla H\left(\nabla H^*\left(\frac{x}{t}\right) - \nabla_x F(x,t)\right),
\end{equation}
for all $t>0$ and $x\in t\,\interior\dom H^*$.
\end{proposition}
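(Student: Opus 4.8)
The plan is to deduce every assertion about $F$ from the corresponding facts about $S$, via the identity $F(x,t) = (tH)^*(x) - S(x,t)$, which is just Proposition~\ref{prop: mainresult_Moreau} rewritten. First I would check this identity applies on $\interior\domF$: since the first set in the definition of $\domF$ is open in $\R^n\times\R$ and the slice $t=0$ contains no interior points, $\interior\domF = \{(x,t) : t>0,\ x/t\in\interior\dom H^*\}$; for such $(x,t)$ one has $x = t(x/t)\in t\,\interior\dom H^*$, hence, using $0\in\dom J$, $x\in(\dom J + t\,\interior\dom H^*)\cap(t\,\dom H^*)$, so Proposition~\ref{prop: mainresult_Moreau} yields $F(x,t) + S(x,t) = (tH)^*(x)$. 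Next, $(tH)^*(x) = tH^*(x/t)$ for $t>0$, and, $H^*$ being a Legendre function, it is continuously differentiable on $\interior\dom H^*$; therefore $\phi(x,t):=tH^*(x/t)$ is $C^1$ on $\interior\domF$. Combining this with the differentiability of $S$ on $\interior\dom S$ from Proposition~\ref{prop: mainresult_HJS} (and its proof in Section~\ref{sec: HJPDE}) together with the inclusion $\interior\domF\subseteq\interior\dom S$ (again from $x\in t\,\interior\dom H^*$ and $0\in\dom J$, since then $S(x,t)\le J(0)+tH^*(x/t)<+\infty$), gives that $F = \phi - S$ is continuously differentiable on $\interior\domF$.

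For the differential equation, I would simply differentiate $\phi$. The chain rule gives $\nabla_x\phi(x,t) = \nabla H^*(x/t)$ and $\partial_t\phi(x,t) = H^*(x/t) - \langle\nabla H^*(x/t),\, x/t\rangle$, and the Fenchel equality for the Legendre pair $(H,H^*)$, namely $H(\nabla H^*(y)) + H^*(y) = \langle\nabla H^*(y),y\rangle$ with $y = x/t$, reduces the latter to $\partial_t\phi(x,t) = -H(\nabla H^*(x/t))$. Subtracting the HJ PDE $\partial_t S + H(\nabla_x S) = 0$ of Proposition~\ref{prop: mainresult_HJS} and inserting $\nabla_x S = \nabla H^*(x/t) - \nabla_x F$ gives $\partial_t F(x,t) = -H(\nabla H^*(x/t)) + H(\nabla H^*(x/t) - \nabla_x F(x,t))$, which is~\eqref{eqt: mainresult_HJ_F}. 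The minimizer formula~\eqref{eqt: mainresult_barv_F} then follows at once: for $t>0$ and $x\in t\,\interior\dom H^*$, Proposition~\ref{prop: mainresult_Moreau} identifies the minimizer $\bar v$ of~\eqref{eqt: mainresult_var1} with that of~\eqref{eqt: mainresult_var2}, which by~\eqref{eqt: mainresult_vbar_S} equals $\nabla H(\nabla_x S(x,t)) = \nabla H(\nabla H^*(x/t) - \nabla_x F(x,t))$.

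The remaining point~\eqref{eqt: mainresult_HJ_F_continuity}, the boundary behaviour as $t\to0^+$, is the delicate one, since in $\phi(x+td,t) = tH^*(x/t + d)$ the argument escapes to infinity. I would split $F(x+td,t) = tH^*(x/t+d) - S(x+td,t)$. For the first term, writing $s = 1/t\to+\infty$ and using the base point $d\in\interior\dom H^*\subseteq\dom H^*$, one has $tH^*(x/t+d) = \tfrac1s H^*(d + sx)\to (H^*)'_\infty(x)$, the asymptotic function of $H^*$ in the direction $x$; and since $H\in\gmRn$, the standard identity expressing the asymptotic function of a closed convex function as the support function of the domain of its conjugate gives $(H^*)'_\infty = \ind_{\dom H^{**}}^* = \ind_{\dom H}^*$, so the limit is $\ind_{\dom H}^*(x)$, finite because $x\in\dom(\ind_{\dom H}^*)$. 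For the second term, $\lim_{t\to0^+} S(x+td,t) = (J^* + \ind_{\dom H})^*(x)$ is the analogous one-sided continuity of $S$ at $t=0$ (valid precisely for $d\in\interior\dom H^*$ and $x\in\dom(\ind_{\dom H}^*)$), which I would take from the analysis of $S$ in Section~\ref{sec: HJPDE}. Adding the two limits gives $\lim_{t\to0^+} F(x+td,t) = \ind_{\dom H}^*(x) - (J^*+\ind_{\dom H})^*(x) = F(x,0)$. I expect this last part — correctly invoking the boundary continuity of $S$ and handling the recession-function limit — to be the main obstacle; the rest is routine manipulation with Legendre duality and the chain rule.
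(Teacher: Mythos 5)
Your proposal is correct and follows essentially the same route as the paper: you use the identity $F(x,t)+S(x,t)=(tH)^*(x)=tH^*(x/t)$ from Proposition~\ref{prop: mainresult_Moreau} on $\interior\domF$, differentiate $tH^*(x/t)$ and subtract the HJ PDE for $S$ (Proposition~\ref{prop: mainresult_HJS}) to obtain~\eqref{eqt: mainresult_HJ_F} and~\eqref{eqt: mainresult_barv_F}, and treat the $t\to 0^+$ limit by splitting off the recession-function limit $tH^*(x/t+d)\to (H^*)'_\infty(x)=\ind_{\dom H}^*(x)$. The only step you defer, $\lim_{t\to 0^+}S(x+td,t)=S(x,0)$, is not a stated result in Section~\ref{sec: HJPDE} but is exactly what the paper proves inline there, using convexity and lower semicontinuity of $S$ (Proposition~\ref{prop: HJPDE}(a)--(b)), the fact that $x/t+d\in\interior\dom H^*$ so $(x+td,t)\in\interior\dom S$, and the segment-continuity property of convex functions (Proposition~\ref{prop:bkgd_convex_cont}).
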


\begin{remark}
The function $S$ is a convex lower semi-continuous function, whose subgradient at $(x,t)\in\Rn\times (0,+\infty)$, whenever exists, satisfies the HJ PDE~\eqref{eqt: mainresult_HJ_S}. 
The uniqueness of such a solution to the HJ PDE~\eqref{eqt: mainresult_HJ_S} is given in~\cite[Corollary~4.1]{darbon2019decomposition}. However, it is unclear whether the other function $F$ is the unique solution to the differential equation~\eqref{eqt: mainresult_HJ_F} with initial condition~\eqref{eqt: mainresult_HJ_F_continuity}. Proposition~\ref{prop: mainresult_HJF} only shows that $F$ is a candidate for solving this PDE. We do not know which functional space should be considered for proving existence and uniqueness of the PDE~\eqref{eqt: mainresult_HJ_F} in an appropriate sense (e.g., in the sense of viscosity solution).

\end{remark}

\section{The generalized Moreau identity} \label{sec: Moreau}

In this section, we provide the generalized Moreau identity and the proof of Proposition~\ref{prop: mainresult_Moreau}. 
Recall that the Moreau identity (decomposition) for a proper lower semi-continuous convex function $\varphi\colon \R^n\to \R\cup\{+\infty\}$ can be formulated as follows~\cite{HiriartUrruty1989Moreau,Moreau1965Proximite}:
$$M_\varphi(x)+M_{\varphi^*}(x)=\frac{\|x\|^2}{2},\quad \forall x\in\R^n,$$
where $M_\varphi$ denotes the Moreau envelope of any function $\varphi\in \gmRn$ which is defined by
\begin{equation*}
M_\varphi(x):=\inf_{y\in\R^n}\left\{\varphi(y)+\frac{1}{2}\|x-y\|^2\right\}=\varphi\square \frac{\|\cdot\|^2}{2}(x), \quad \forall x\in\Rn.
\end{equation*}

Note that the two minimization problems in~\eqref{eqt: prop31_equal_minimizer} are generalizations of the two minimization problems in the definitions of $M_\varphi$ and $M_{\varphi^*}$. To be specific, when we set $\varphi = J$, $H=\frac{1}{2}\|\cdot\|^2$ and $t=1$, the conclusion in Proposition~\ref{prop: mainresult_Moreau} coincides with the Moreau identity. 

In the sequel, we aim at studying a generalization of this decomposition, that will enable us to link the variational models~\eqref{eqt: mainresult_var1} and~\eqref{eqt: mainresult_var2} and prove Proposition~\ref{prop: mainresult_Moreau}.
In the literature, a generalized version of Moreau identity is proposed in~\cite{Combettes2013Moreau}, which works for general Banach spaces. Here, we present a simpler version that works for Euclidean spaces.
The functions $tH$ and $J$ respectively correspond to $f^*$ and $\varphi$ in~\cite[Theorem~1]{Combettes2013Moreau},
but our assumptions are slightly different from the ones in~\cite[Theorem~1]{Combettes2013Moreau} (we do not assume $x\in t\,\interior\dom H^*$ in this paper). For completeness, we include our results as well as the proof.

To study the generalization of Moreau decomposition, we consider the following two optimization problems:
\begin{equation}\label{eqt: lemma_u_opt}
        \ubar = \argmin_{\uvar\in\R^n} \left\{t H^*\left(\frac{x-\uvar}{t} \right) + J(\uvar)\right\},
    \end{equation}
    and
\begin{equation}
\label{eqt: lemma_p_opt}
\pbar = \argmax_{\pvar\in\R^n} \{\langle \pvar,x \rangle - tH(\pvar) - J^*(\pvar)\}.
\end{equation}
We will prove later that they are equivalent to the minimization problems in~\eqref{eqt: prop31_equal_minimizer} after some change of variables. 
Moreover, when we set $\varphi=J$, $H=\frac{1}{2}\|\cdot\|^2$ and $t=1$, problems~\eqref{eqt: lemma_u_opt} and~\eqref{eqt: lemma_p_opt} are also equivalent to the two minimization problems in the Moreau identity.

In the following proposition, the generalized Moreau identity is proved for  problems~\eqref{eqt: lemma_u_opt} and~\eqref{eqt: lemma_p_opt} via Fenchel duality.

\begin{proposition}\label{proposition: existence_uniqueness}
Assume that (A1) holds, and also assume $t>0$ and $x\in \dom J + t\,\interior\dom H^*$. Then, there exist unique vectors $\bar{u}$ and $\bar{p}$ in $\R^n$ solving the optimization problems~\eqref{eqt: lemma_u_opt} and~\eqref{eqt: lemma_p_opt}, respectively.
Moreover, the following two statements are equivalent:
\begin{enumerate}
\item[(a)] The vectors $\ubar$ and $\pbar$ 
solve the optimization problems~\eqref{eqt: lemma_u_opt} and~\eqref{eqt: lemma_p_opt}, respectively.
\item[(b)]  There hold
\begin{equation}\label{eqt: lemma_i} 
    x  = \ubar + t \nabla H (\pbar), \quad\text{ and }\quad 
    J(\ubar) + J^*(\pbar) - \langle \pbar, \ubar\rangle = 0.
\end{equation}
\end{enumerate}
Additionally, the optimal values in~\eqref{eqt: lemma_u_opt} and~\eqref{eqt: lemma_p_opt} are the same, i.e., we have
\begin{equation}\label{eqt: lem_equality_optimal_value}
    \min_{\uvar\in\R^n} \left\{t H^*\left(\frac{x-\uvar}{t} \right) + J(\uvar)\right\}
    = \max_{\pvar\in\R^n} \{\langle \pvar,x \rangle - tH(\pvar) - J^*(\pvar)\}.
\end{equation}
\end{proposition}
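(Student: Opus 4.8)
The plan is to derive everything from Fenchel's duality theorem applied to the primal problem~\eqref{eqt: lemma_u_opt}. First I would rewrite~\eqref{eqt: lemma_u_opt} in the standard Fenchel form $\min_u \{\phi(u) + \psi(x-u)\}$ by setting $\phi = J$ and $\psi(\cdot) = tH^*(\cdot/t)$, noting that the Legendre–Fenchel transform of $\psi$ is $\psi^*(p) = tH(p)$ (a one-line computation from the definition of $H^*$ and the scaling rules). The Fenchel dual of $\inf_u\{\phi(u)+\psi(x-u)\} = (\phi \square \psi)(x)$ is then $\sup_p\{\langle p,x\rangle - \phi^*(p) - \psi^*(p)\} = \sup_p\{\langle p,x\rangle - J^*(p) - tH(p)\}$, which is exactly~\eqref{eqt: lemma_p_opt}. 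So the equality of optimal values~\eqref{eqt: lem_equality_optimal_value} and the equivalence of (a) and (b)---the latter being precisely the Fenchel primal-dual optimality conditions $p \in \partial\psi(x-u)$ and $p \in \partial\phi(u)$---will follow once I verify the qualification condition that makes strong duality hold and guarantees attainment.

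The qualification condition I would use is the standard one: $\ri(\dom\phi) \cap \ri(\dom\psi(x-\cdot)) \neq \emptyset$, i.e., there is a point in the relative interior of $\dom J$ whose difference from $x$ lies in the relative interior of $t\dom H^*$. The hypothesis $x \in \dom J + t\,\interior\dom H^*$ gives a point $u_0 \in \dom J$ with $(x-u_0)/t \in \interior\dom H^*$; combining this with a relative-interior point of $\dom J$ via the line-segment principle ($\ri$ of a convex set is "absorbing" along segments into the set) produces a genuinely relative-interior common point, so the qualification holds and strong duality applies: the dual value is attained and equals the primal value. This is the step I'd expect to need the most care, since it must be carried out without assuming $x \in t\,\interior\dom H^*$ (the weaker hypothesis is the whole point of this version versus~\cite{Combettes2013Moreau}); the asymmetry between the two terms---$J$ being $1$-coercive with full-domain conjugate, while $H^*$ is merely Legendre---means I should lean on $\dom J^* = \R^n$ from Remark~\ref{rem: properties_HandJ} to get attainment on the primal side cleanly.

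For existence and uniqueness of $\bar u$: the objective in~\eqref{eqt: lemma_u_opt} is proper (nonempty domain by the qualification condition), lower semicontinuous, and $1$-coercive because $J$ is $1$-coercive and $tH^*(\cdot/t)$ is bounded below (a Legendre function, hence proper convex, has an affine minorant), so a minimizer exists; strict convexity is delicate since neither summand need be strictly convex, so instead I would get uniqueness of $\bar u$ from the optimality system~\eqref{eqt: lemma_i} together with uniqueness of $\bar p$. For $\bar p$: the dual objective $p \mapsto \langle p,x\rangle - tH(p) - J^*(p)$ is concave, and since $H$ is Legendre it is \emph{strictly} convex on $\interior\dom H$, while any maximizer must lie in $\interior\dom H$ (from $x = \bar u + t\nabla H(\bar p)$ in~\eqref{eqt: lemma_i}, using Remark~\ref{rem: properties_HandJ}), giving a unique $\bar p$; then the first relation in~\eqref{eqt: lemma_i}, $\bar u = x - t\nabla H(\bar p)$, pins down $\bar u$ uniquely as well. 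Finally I'd assemble the pieces: Fenchel duality yields (a)$\Leftrightarrow$(b) and~\eqref{eqt: lem_equality_optimal_value}, the coercivity/lsc argument plus the strict convexity of $H$ on its interior domain yield existence and uniqueness of both $\bar u$ and $\bar p$, and I'd double-check that the subdifferential condition $\bar p \in \partial\psi(x-\bar u)$ unpacks (via the scaling $\psi(\cdot)=tH^*(\cdot/t)$ and the Legendre property) to $\bar p = \nabla H^*((x-\bar u)/t)$, equivalently $x - \bar u = t\nabla H(\bar p)$, matching the first equation of~\eqref{eqt: lemma_i}, while $\bar p \in \partial J(\bar u)$ is the Fenchel equality $J(\bar u) + J^*(\bar p) = \langle\bar p,\bar u\rangle$, matching the second.
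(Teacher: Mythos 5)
Your proposal is correct and takes essentially the same route as the paper: both apply Fenchel duality to the decomposition $\phi=J$, $\psi=tH^*(\cdot/t)$, identify the dual with~\eqref{eqt: lemma_p_opt}, obtain strong duality and dual attainment from the hypothesis $x\in \dom J + t\,\interior\dom H^*$, get primal attainment from the $1$-coercivity of $J$, and unpack the primal-dual optimality conditions $\bar p\in\partial J(\bar u)$, $\bar p\in\partial H^*\bigl((x-\bar u)/t\bigr)$ via the Legendre property of $H$ into~\eqref{eqt: lemma_i}, with uniqueness coming from strict convexity on $\interior\dom H$. The only cosmetic differences are that you invoke the relative-interior constraint qualification (verified by a line-segment argument) where the paper uses the Ekeland--Temam continuity-point qualification, and you deduce uniqueness of $\bar u$ from the unique $\bar p$ through $x=\bar u + t\nabla H(\bar p)$ rather than from strict convexity of $H^*$; both variants are sound.
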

\begin{proof}
We apply~\cite[Remark~III.4.2]{ekeland1976}, where we set $\Lambda$ to be the identity map, $G\colon u\mapsto t H^*\left(\frac{x-\uvar}{t} \right)$ and $F=J$ to hold. 
Note that in this proof, the notation $F$ is used as  in~\cite{ekeland1976}, whose meaning is different from what we defined in~\eqref{eqt: mainresult_defF}.
By~\cite[Proposition~E.1.3.1]{hiriart2004}, we have
\begin{equation*}
    G^*(p) = \langle x,p\rangle + tH(-p), \quad \forall p\in\Rn.
\end{equation*}
Under this setup, the primal problem $\inf\mathcal{P}$ can be written as the problem in~\eqref{eqt: lemma_u_opt}, and the dual problem $\sup \mathcal{P}^*$ (see (4.18) in Chap.III in~\cite{ekeland1976}) can be written as
\begin{equation*}
    \sup_{p\in\Rn}\left\{-F^*(p) - G^*(-p)\right\} 
    = \sup_{p\in\Rn}\left\{-J^*(p) + \langle x,p\rangle - tH(p)\right\}, 
\end{equation*}
which is the problem in~\eqref{eqt: lemma_p_opt}. Then, by Theorem~4.2 and Remark~4.2 in Chapter III in~\cite{ekeland1976}, to prove the existence of $\ubar$ and $\pbar$, it suffices to prove the following two statements
\begin{equation}\label{eqt: prop43_pf_cond1}
    \lim_{\|u\|\to+\infty} \left\{t H^*\left(\frac{x-\uvar}{t} \right) + J(\uvar)\right\} = +\infty,
\end{equation}
and 
\begin{equation}\label{eqt: prop43_pf_cond2}
    \exists u_0\in\Rn \text{ s.t. }J(u_0)<+\infty,\, 
    G(u_0)<+\infty,
    \text{  $G$ being continuous at $u_0$.}
\end{equation}
Note that~\eqref{eqt: prop43_pf_cond1} holds since $J$ is assumed to be 1-coercive. It remains to check~\eqref{eqt: prop43_pf_cond2}. By assumption, $x\in \dom J + t\,\interior\dom H^*$ holds, and hence there exists $u_0\in\dom J$ such that $\frac{x-u_0}{t}$ is in the interior of $\dom H^*$, which implies the continuity of $H^*$ at $\frac{x-u_0}{t}$. As a result,~\eqref{eqt: prop43_pf_cond2} holds for this $u_0$. The existence of the optimizers $\ubar$ and $\pbar$ follows.
Moreover, Theorem~4.2 and Remark~4.2 in Chapter III in~\cite{ekeland1976} imply~\eqref{eqt: lem_equality_optimal_value}.

Now, we prove the equivalence of the two statements (a) and (b).
By~\eqref{eqt: lem_equality_optimal_value} and~\cite[Proposition~III.4.1 and Remark~III.4.2]{ekeland1976}, $\ubar$ and $\pbar$ satisfy~\eqref{eqt: lemma_u_opt} and~\eqref{eqt: lemma_p_opt} if and only if there hold
\begin{equation}\label{eqt: prop43pf_optimality_FG}
-\pbar\in \partial G(\ubar) \text{ and }
\pbar\in \partial F(\ubar).
\end{equation}
As it is proved above, there exists $u_0\in\Rn$ such that $\frac{x-u_0}{t}$ is in the interior of $\dom H^*$. As a result,~\cite[Theorem~XI.3.2.1 and (3.2.2)]{hiriart2004} hold, by which we have
\begin{equation*}
    \partial G(u) = 
    \begin{dcases}
    -\partial H^*\left(\frac{x-u}{t}\right), & \text{if }u\in x-t\,\interior\dom H^*,\\
    \emptyset, & \text{otherwise}.
    \end{dcases}
\end{equation*}
Therefore,~\eqref{eqt: prop43pf_optimality_FG} is equivalent to
\begin{equation}\label{eqt: prop43pf_optimality}
    \pbar\in \partial H^*\left(\frac{x-\ubar}{t}\right),
    \text{ and } \pbar\in\partial J(\ubar).
\end{equation}
Since $H$ is a Legendre function, the first inclusion in~\eqref{eqt: prop43pf_optimality} holds
if and only if $\frac{x-\ubar}{t}=\nabla H(\pbar)$ is verified,
which is equivalent to the first equality in~\eqref{eqt: lemma_i}. Also, by Proposition~\ref{prop: bkgd_conjugate_equal_inverse}, the second inclusion in~\eqref{eqt: prop43pf_optimality} is equivalent to the second equality in~\eqref{eqt: lemma_i}. Therefore, the equivalence of statements (a) and (b) follows. 

It remains to prove the uniqueness of the two optimizers $\ubar$ and $\pbar$. Let $\ubar$ be any minimizer in~\eqref{eqt: lemma_u_opt}, and $\pbar$ be any maximizer in~\eqref{eqt: lemma_p_opt}. From the argument above, $\ubar$ and $\pbar$ satisfy~\eqref{eqt: prop43pf_optimality}, which implies that $\pbar\in\interior\dom H$ and $\frac{x-\ubar}{t}\in\interior \dom H^*$ hold since $H$ is Legendre (see Remark~\ref{rem: properties_HandJ}). Then, the uniqueness of $\ubar$ and $\pbar$ follows since $H$ and $H^*$ are strictly convex in the interior of their domains. 
\qed
\end{proof}

\begin{remark}
    Note that the 1-coercivity (supercoercivity) of $J$ is the main tool in showing existence of the above solutions. However, one can ensure the existence of the optimizers in~\eqref{eqt: lemma_u_opt} and~\eqref{eqt: lemma_p_opt} under less restrictive assumptions, namely conditions that yield~\eqref{eqt: prop43_pf_cond1}.
\end{remark}

\begin{remark} \label{rem: domS_interior}
Let $t>0$.
Since $H^*$ is Legendre (see Remark~\ref{rem: properties_HandJ}), the interior of $\dom H^*$ is non-empty, and hence the set $\dom J + t\,\dom H^*$ also has non-empty interior. Moreover, by~\cite[Proposition~A.2.1.11,~A.2.1.12]{hiriart2004} and the discussion after these propositions,
we have
\begin{equation}\label{eqt: rem_int_domF}
\begin{split}
\interior\left(\dom J + t\,\dom H^*\right) &= \ri\left(\dom J + t\,\dom H^*\right) = \ri\dom J + t\,\ri \dom H^*\\
&= \ri\dom J + t\,\interior \dom H^*  \subseteq \dom J + t\,\interior \dom H^*.
\end{split}
\end{equation}
Also, we have $\dom J + t\,\interior \dom H^*\subseteq \dom J + t\,\dom H^*$.
Since the set $\dom J + t\,\interior \dom H^*$ is the sum of two sets in $\R^n$ where one of them is open, then $\dom J + t\,\interior \dom H^*$ is also an open set, which implies 
\begin{equation}\label{eqt:rem43inclusion2}
    \dom J + t\,\interior \dom H^*  \subseteq \interior\left(\dom J + t\,\dom H^*\right).
\end{equation}
Combining~\eqref{eqt: rem_int_domF} and~\eqref{eqt:rem43inclusion2}, we get
\begin{equation}\label{eqt: rem47_interior}
    \interior\left(\dom J + t\,\dom H^*\right) = \dom J + t\,\interior \dom H^*.
\end{equation}
Therefore, the assumption $x\in \dom J + t\,\interior\dom H^*$ in Proposition~\ref{proposition: existence_uniqueness} is equivalent to $x\in \interior\left(\dom J + t\,\dom H^*\right)$.
\end{remark}

\begin{remark} \label{rem: weaker_assumption_x}
Let $t>0$. 
According to Proposition~\ref{prop: bkgd_sum_Legendre_infconv}, 
the optimal value in~\eqref{eqt: lemma_u_opt} is finite if and only if $x\in\dom J + t\,\dom H^*$ holds. Moreover, whenever $x\in\dom J + t\,\dom H^*$ holds, the minimizer $\ubar$ exists, and we have
\begin{equation*}
    \min_{\uvar\in\R^n} \left\{t H^*\left(\frac{x-\uvar}{t} \right) + J(\uvar)\right\}
    = \sup_{\pvar\in\R^n} \{\langle \pvar,x \rangle - tH(\pvar) - J^*(\pvar)\}.
\end{equation*}
Note that this feasible set for $x$ is slightly larger than the set $\dom J + t\,\interior\dom H^*$ in the assumption of Proposition~\ref{proposition: existence_uniqueness}, which is actually the interior of the larger set $\dom J + t\,\dom H^*$ by Remark~\ref{rem: domS_interior}. 
We employ the stronger assumption on $x$ in Proposition~\ref{proposition: existence_uniqueness}, in order to have the existence of the maximizer $\pbar$ in~\eqref{eqt: lemma_p_opt}. A counterexample where the maximizer $\pbar$ does not exist under the weaker assumption $x\in\dom J + t\,\dom H^*$ is when $x=0$, $J=\ind_{\{0\}}$ and $H(p)=e^p$ for all $p\in\R$.
\end{remark}

\bigbreak
Now, we apply Proposition~\ref{proposition: existence_uniqueness} to prove our main result in Proposition~\ref{prop: mainresult_Moreau}.

\textbf{Proof of Proposition~\ref{prop: mainresult_Moreau}.}
Let $\imageq\in \interior\dom H^*$. Since $H^*$ is Legendre (see Remark~\ref{rem: properties_HandJ}), $H^*$ is differentiable at $\imageq$. Let $p = \nabla H^*(\imageq)$, which implies $\imageq = \nabla H(p)$ as also $H$ is Legendre. By straightforward computation, we obtain
\begin{equation}\label{eqt: prop32pf_Bregman1}
\begin{split}
    tD_{H^*}\left(\frac{x}{t}, \imageq\right) + J^*(\nabla H^*(\imageq))
    &= tD_{H^*}\left(\frac{x}{t}, \nabla H(p)\right) + J^*(p)
    = D_{(tH)^*}\left(x, t\nabla H(p)\right) + J^*(p),
\end{split}
\end{equation}
where the second equality holds by Proposition~\ref{prop: Breg_scaling} whose assumption is satisfied since $\frac{x}{t}$ and $\nabla H(p)$ are both in $\dom H^*$.
Since $tH$ is also Legendre, the function $(tH)^*$ is differentiable at $t\nabla H(p) = \nabla (tH)(p)$, and the gradient equals $\nabla (tH)^*\left(\nabla (tH)(p)\right) = p$.
As a result, the assumption of Proposition~\ref{prop: Breg_equal_twodef} is satisfied, which implies 
\begin{equation}\label{eqt: prop32pf_Bregman2}
\begin{split}
    D_{(tH)^*}\left(x, t\nabla H(p)\right) + J^*(p)
    &= d_{(tH)^*}\left(x, p\right) + J^*(p)= d_{tH}\left(p, x\right) + J^*(p) \\
    &= tH(p) - \langle p,x\rangle + J^*(p) + tH^*\left(\frac{x}{t}\right),
\end{split}
\end{equation}
where the second equality holds by~\eqref{eqt: equal_Bregd}.
By~\eqref{eqt: prop32pf_Bregman1} and~\eqref{eqt: prop32pf_Bregman2}, we get
\begin{equation} \label{eqt: prop32pf_changeofvar_equality}
    tD_{H^*}\left(\frac{x}{t}, \imageq\right) + J^*(\nabla H^*(\imageq))
    =tH(p) - \langle p,x\rangle + J^*(p) + tH^*\left(\frac{x}{t}\right),
\end{equation}
and hence there holds
\begin{equation}\label{eqt: prop31pf_changeofvar}
\begin{split}
    \inf_{\imageq\in\,\interior \dom H^*} \left\{tD_{H^*}\left(\frac{x}{t}, \imageq\right) + J^*(\nabla H^*(\imageq))\right\}
    &\geq \inf_{p\in\Rn}\left\{tH(p) - \langle p,x\rangle + J^*(p) + tH^*\left(\frac{x}{t}\right)\right\} \\
    &= \inf_{p\in\Rn}\{tH(p) - \langle p,x\rangle + J^*(p)\} + tH^*\left(\frac{x}{t}\right),
\end{split}
\end{equation}
where the last equality holds since $tH^*\left(\frac{x}{t}\right)$ is a real number.
Note that the minimization problem on the right hand side of~\eqref{eqt: prop31pf_changeofvar} is equivalent to the maximization problem in~\eqref{eqt: lemma_p_opt} by taking the negative sign.
Then, by Proposition~\ref{proposition: existence_uniqueness}, the minimizer on the right hand side of~\eqref{eqt: prop31pf_changeofvar} exists and is unique. We denote this unique minimizer by $\pbar$. Moreover, Proposition~\ref{proposition: existence_uniqueness} implies that $\pbar$ is in the interior of $\dom H$, and $H$ is differentiable at $\pbar$.
Let $\bar v=\nabla H(\pbar)$, which implies
$\pbar =\nabla H^*(\bar v)$ since $H$ is Legendre.
From the calculation above, we deduce that~\eqref{eqt: prop32pf_changeofvar_equality} holds for $\bar v$ and $\pbar$. Hence, by~\eqref{eqt: prop31pf_changeofvar} and the fact that $\bar v$ is in $\interior \dom H^*$ (according to Remark~\ref{rem: properties_HandJ}), we obtain
\begin{equation}\label{eqt:prop1_pf_eqt37}
\begin{split}
    \inf_{p\in\Rn}\left\{tH(p) - \langle p,x\rangle + J^*(p) + tH^*\left(\frac{x}{t}\right)\right\}
    &= tH(\pbar) - \langle \pbar,x\rangle + J^*(\pbar) + tH^*\left(\frac{x}{t}\right)\\
    &= tD_{H^*}\left(\frac{x}{t}, \bar\imageq\right) + J^*(\nabla H^*(\bar\imageq))\\
    &\geq \inf_{\imageq\in\,\interior \dom H^*} \left\{tD_{H^*}\left(\frac{x}{t}, \imageq\right) + J^*(\nabla H^*(\imageq))\right\}.
\end{split}
\end{equation}
Comparing~\eqref{eqt:prop1_pf_eqt37} with~\eqref{eqt: prop31pf_changeofvar}, we conclude that both inequalities in~\eqref{eqt:prop1_pf_eqt37} and~\eqref{eqt: prop31pf_changeofvar} become equalities,
$\bar v$ is a minimizer of~\eqref{eqt: mainresult_var1}, and there holds
\begin{equation}\label{eqt: prop32pf_equiv_var1}
    \inf_{\imageq\in\,\interior \dom H^*} \left\{tD_{H^*}\left(\frac{x}{t}, \imageq\right) + J^*(\nabla H^*(\imageq))\right\}
    = \inf_{p\in\Rn}\{tH(p) - \langle p,x\rangle + J^*(p)\} + tH^*\left(\frac{x}{t}\right).
\end{equation}
Moreover, if there are two distinct minimizers of~\eqref{eqt: mainresult_var1}, denoted by $\bar v_1$ and $\bar{v}_2$, then with the argument above, we conclude that $\nabla H^*(\bar v_1)$ and $\nabla H^*(\bar v_2)$ are both minimizers of the minimization problem on the right hand side of~\eqref{eqt: prop31pf_changeofvar}. In other words, we have $\nabla H^*(\bar v_1) =\nabla H^*(\bar v_2)=\pbar$, which implies $\bar v_1 = \bar v_2$, since $H$ is Legendre.
Therefore, the minimizer of~\eqref{eqt: mainresult_var1} exists and is unique, which equals $\nabla H(\pbar)$.

On the other hand, by the change of variable $v=\frac{x-u}{t}$, we deduce that 
\begin{equation}\label{eqt: prop32pf_equiv_var2}
    \min_{\uvar\in\R^n} \left\{t H^*\left(\frac{x-\uvar}{t} \right) + J(\uvar)\right\}
    = \min_{\imageq\in\R^n} \{tH^*(\imageq) + J(x-t\imageq) \}.
\end{equation}
Let $\bar{u}$ and $\bar{v}$ be the minimizers of the two optimization problems in~\eqref{eqt: prop32pf_equiv_var2}, respectively. According to the change of variable, we have $\bar{v} = \frac{x-\bar{u}}{t}$, and hence there holds
\begin{equation}\label{eqt: prop32pf_equiv_var2_argmin}
    \frac{1}{t}\left(x - \argmin_{\uvar\in\R^n} \left\{t H^*\left(\frac{x-\uvar}{t} \right) + J(\uvar)\right\}\right)
    = \argmin_{\imageq\in\R^n} \{tH^*(\imageq) + J(x-t\imageq) \}.
\end{equation}
In other words,  problem~\eqref{eqt: mainresult_var2} is related to~\eqref{eqt: lemma_u_opt} via the change of variable.
By Proposition~\ref{proposition: existence_uniqueness}, the minimizer in~\eqref{eqt: lemma_u_opt} exists and is unique, which implies the existence and uniqueness of the minimizer in  problem~\eqref{eqt: mainresult_var2}.
Recall that $\ubar$ is a minimizer in~\eqref{eqt: lemma_u_opt}.
By~\eqref{eqt: prop32pf_equiv_var2_argmin}, the unique minimizer in  problem~\eqref{eqt: mainresult_var2} equals $\frac{x-\ubar}{t}$,
which equals $\nabla H(\pbar)$ by Proposition~\ref{proposition: existence_uniqueness}. Therefore,~\eqref{eqt: prop31_equal_minimizer} is proved.
Moreover,~\eqref{eqt: prop31_minimal_value} follows from~\eqref{eqt: prop32pf_equiv_var1},~\eqref{eqt: prop32pf_equiv_var2} and Proposition~\ref{proposition: existence_uniqueness}.
\qed

\section{Variational models and HJ PDEs} \label{sec: HJPDE}

In this section, we investigate the functions $S$ and $F$ defined in~\eqref{eqt: mainresult_defS} and~\eqref{eqt: mainresult_defF}, respectively. We provide the proofs for Propositions~\ref{prop: mainresult_HJS} and~\ref{prop: mainresult_HJF}, and some other properties. 
By change of variable $u=x-tv$ in the optimization problem in the first line~\eqref{eqt: mainresult_defS}, 
the function $S$ satisfies
\begin{equation} \label{eqt: S_equiv_formula}
    S(x,t) = 
    \begin{dcases}
    \inf_{u\in\R^n} \left\{t H^*\left(\frac{x-u}{t} \right) + J(u)\right\}, &\text{if }x\in\R^n, t>0,\\
    (J^* + \ind_{\dom H})^*(x), &\text{if }x\in\R^n, t=0,\\
    +\infty, &\text{if }x\in\R^n, t<0.
    \end{dcases}
\end{equation}
We show some properties of the function $S$ and its connection with the HJ PDE~\eqref{eqt: mainresult_HJ_S} in the following proposition.

\begin{proposition}  \label{prop: HJPDE}
Assume that (A1) holds. Let $S\colon \R^n\times \R\to\R\cup\{+\infty\}$ be the function defined in~\eqref{eqt: mainresult_defS}. Then, the following statements hold.
\begin{itemize}
    \item[(a)] $S$ is a convex and lower semi-continuous function with respect to the joint variable. The Legendre-Fenchel transform of $S$ is given by
    \begin{equation} \label{eqt: prop_Legendre_S}
        S^*(p, E^-) = J^*(p) + \ind_{\{(p,E^-)\in(\dom H)\times\R\colon E^- + H(p)\leq 0\}}(p,E^-),
    \end{equation}
    for all $p\in\R^n$ and $E^-\in\R$.
    
    \item[(b)] The domain of $S$ equals
    \begin{equation} \label{eqt: prop_pdeb_domS}
    \begin{split}
        \dom S = \left(\bigcup_{t>0} (\dom J + t\,\dom H^*)\times\{t\}\right) \bigcup\left((\dom J + \dom (\ind_{\dom H}^*))\times \{0\}\right).
    \end{split}
    \end{equation}
    Moreover, the interior of $\dom S$ equals
    \begin{equation}\label{eqt: prop51_b_interiordomS}
        \interior\dom S = \bigcup_{t>0} (\dom J + t\,\interior\dom H^*)\times\{t\}.
    \end{equation}
    \item[(c)] Let $(x,t)$ be an arbitrary point in $\interior\dom S$. Then, $S$ is differentiable at $(x,t)$, and its gradient equals
    \begin{equation} \label{eqt: gradS}
        \nabla S(x,t) = (\pbar, -H(\pbar)),
    \end{equation}
    where $\pbar$ is the unique maximizer in~\eqref{eqt: lemma_p_opt} at $(x,t)$.
    
    \item[(d)] $S$ solves the HJ PDE~\eqref{eqt: mainresult_HJ_S},
    where $\icHJS\colon \R^n\to\R\cup\{+\infty\}$ is a function in $\gmRn$ whose Legendre-Fenchel transform $\icHJS^*$ satisfies $\dom \icHJS^* = \cl \dom H$ and $\icHJS^* = J^*$ in the domain of $\icHJS^*$.
    
    \item[(e)] For all $x\in\R^n$, we have $S(x,0)\leq J(x)$. If $x\in\dom J$ satisfies $\partial J(x)\cap \cl \dom H\neq \emptyset$, then there hold $S(x,0)= J(x)$ and $\partial_x S(x,0) = \partial J(x) \cap \cl \dom H$, where $\partial_x S(x,0)$ denotes the subdifferential of the function $y\mapsto S(y,0)$. 
\end{itemize}
\end{proposition}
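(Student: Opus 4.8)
The plan is to treat $S$, in the form~\eqref{eqt: S_equiv_formula}, as an infimal convolution in the joint variable $(x,t)$ of two explicit closed convex functions, and then to read off (a)--(e) from the conjugacy of inf-convolution together with Proposition~\ref{proposition: existence_uniqueness}. Let $\widehat{g}\colon\R^n\times\R\to\R\cup\{+\infty\}$ be the closed perspective function of $H^*$, namely $\widehat{g}(x,t)=tH^*(x/t)$ for $t>0$, $\widehat{g}(x,0)=(H^*)'_\infty(x)=\ind_{\dom H}^*(x)$, and $\widehat{g}(x,t)=+\infty$ for $t<0$; and let $\widehat{J}(u,s)=J(u)+\ind_{\{0\}}(s)$. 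Both are in $\Gamma_0(\R^n\times\R)$, and $S=\widehat{g}\,\conv\,\widehat{J}$ after the change of variables already recorded in~\eqref{eqt: S_equiv_formula} (at $t=0$ this uses $\dom J^*=\R^n$, which gives $(J^*+\ind_{\dom H})^*=J\,\conv\,\ind_{\dom H}^*$). For (a) I would compute $S^*=\widehat{g}^*+\widehat{J}^*$, where $\widehat{J}^*(p,E^-)=J^*(p)$ and the conjugate of the perspective function of $H^*$ is the indicator of $\{(p,E^-)\colon (H^*)^*(p)+E^-\le 0\}=\{(p,E^-)\colon H(p)+E^-\le 0\}$, which forces $p\in\dom H$; this is exactly~\eqref{eqt: prop_Legendre_S}. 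Convexity of $S$ is automatic, and for lower semicontinuity (i.e.\ $S=S^{**}\in\Gamma_0$) I would verify the recession-function criterion for closedness of an inf-convolution: $\widehat{g}$ is positively homogeneous of degree one, hence $\widehat{g}'_\infty=\widehat{g}$; $J$ is $1$-coercive, so $J'_\infty(d)=+\infty$ for all $d\ne 0$; thus for a nonzero direction $(d_x,d_t)$ one gets $\widehat{g}'_\infty(d_x,d_t)+\widehat{J}'_\infty(-d_x,-d_t)=+\infty$ unless $d_t=0$, in which case it equals $\ind_{\dom H}^*(d_x)+J'_\infty(-d_x)=+\infty>0$. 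Properness follows from finiteness at, say, $(u_0+v_0,1)$ with $u_0\in\dom J$, $v_0\in\dom H^*$, together with properness of $S^*$.

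For (b) I would use that the domain of an inf-convolution is the Minkowski sum of domains: in each slice $t>0$ this is $\dom J+t\,\dom H^*$, and at $t=0$ the function $(J^*+\ind_{\dom H})^*=J\,\conv\,\ind_{\dom H}^*$ has domain $\dom J+\dom(\ind_{\dom H}^*)$, which gives~\eqref{eqt: prop_pdeb_domS}. For the interior, write $S(x,t)=\inf_u\Phi(x,t,u)$ with $\Phi(x,t,u)=\widehat{g}(x-u,t)+J(u)$ jointly in $\Gamma_0$, and use that relative interior commutes with the linear projection $(x,t,u)\mapsto(x,t)$ and with the set operations defining $\dom\Phi$: $\ri\dom\Phi$ forces $t>0$, $(x-u)/t\in\interior\dom H^*$ and $u\in\ri\dom J$, so projecting yields $\ri\dom S=\{(x,t)\colon t>0,\ x\in\ri\dom J+t\,\interior\dom H^*\}$, and this equals $\{(x,t)\colon t>0,\ x\in\dom J+t\,\interior\dom H^*\}$ by the accessibility lemma together with the identity $\ri\dom J+t\,\interior\dom H^*=\dom J+t\,\interior\dom H^*$ already established in Remark~\ref{rem: domS_interior}. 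Since $\dom S$ is full-dimensional in $\R^n\times\R$, $\interior\dom S=\ri\dom S$, which is~\eqref{eqt: prop51_b_interiordomS}.

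For (c) and (d), fix $(x,t)\in\interior\dom S$, so $t>0$ and $x\in\dom J+t\,\interior\dom H^*$, and apply Proposition~\ref{proposition: existence_uniqueness}: it supplies the unique maximizer $\pbar\in\interior\dom H$ of~\eqref{eqt: lemma_p_opt} with $S(x,t)=\langle\pbar,x\rangle-tH(\pbar)-J^*(\pbar)$. Since $S$ is finite and continuous at $(x,t)$, $\partial S(x,t)\neq\emptyset$; for $(q,\eta)\in\partial S(x,t)$, restricting the subgradient inequality to the slice $s=t$ forces $q\in\argmax_p\{\langle p,x\rangle-tH(p)-J^*(p)\}=\{\pbar\}$ (as $S(\cdot,t)=(tH+J^*)^*$ near $x$ and the maximizer is unique), while restricting to $x$ fixed forces $\eta$ to be a subgradient at $t$ of the one-variable function $s\mapsto S(x,s)=\sup_p\{(\langle p,x\rangle-J^*(p))-sH(p)\}$, a supremum over $p$ of affine functions of $s$ with slopes $-H(p)$ whose unique maximizer at $s=t$ is $\pbar$, hence $\eta=-H(\pbar)$. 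Therefore $\partial S(x,t)=\{(\pbar,-H(\pbar))\}$, so $S$ is differentiable at $(x,t)$ with $\nabla S(x,t)=(\pbar,-H(\pbar))$, which is (c) and~\eqref{eqt: gradS}; substituting into the equation gives $\partial_t S(x,t)+H(\nabla_x S(x,t))=-H(\pbar)+H(\pbar)=0$, the first line of~\eqref{eqt: mainresult_HJ_S}. The initial condition follows by noting $S(\cdot,0)=(J^*+\ind_{\dom H})^*=(J^*+\ind_{\cl\dom H})^*$ — adding the finite continuous function $J^*$ commutes with taking the closure of $\ind_{\dom H}$, and $\cl\ind_{\dom H}=\ind_{\cl\dom H}$ — and the last expression is exactly $(\icHJS^*)^*=\icHJS$ for the function $\icHJS$ described in the statement.

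Finally, for (e): the inequality is immediate, since $S(x,0)=\sup_{p\in\dom H}\{\langle p,x\rangle-J^*(p)\}\le\sup_{p\in\R^n}\{\langle p,x\rangle-J^*(p)\}=J(x)$. If $x\in\dom J$ and $p_0\in\partial J(x)\cap\cl\dom H$, then $\langle p_0,x\rangle-J^*(p_0)=J(x)$ by Fenchel--Young, and since $J^*$ is finite and continuous on $\R^n$ the supremum over $\dom H$ equals the supremum over $\cl\dom H\ni p_0$, so $S(x,0)\ge J(x)$ and hence $S(x,0)=J(x)$. For the subdifferential, use $S(\cdot,0)=\icHJS$: a vector $p$ lies in $\partial_x S(x,0)=\partial\icHJS(x)$ iff $p\in\cl\dom H$ and $\icHJS(x)+J^*(p)=\langle p,x\rangle$; since $\icHJS(x)=S(x,0)=J(x)$ under the stated hypothesis, this reads $p\in\cl\dom H$ and $p\in\partial J(x)$, i.e.\ $\partial_x S(x,0)=\partial J(x)\cap\cl\dom H$. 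I expect the main obstacle to be the convex-analytic bookkeeping in (a) and (b): verifying that the inf-convolution stays in $\Gamma_0(\R^n\times\R)$ (ruling out recession directions tangential to $\{t=0\}$) and pinning down $\interior\dom S$ precisely, since this mixes the Minkowski-sum and perspective-function structures and relies on relative-interior calculus; once these are in place, (c)--(e) follow mechanically from Proposition~\ref{proposition: existence_uniqueness} and the conjugacy of inf-convolution.
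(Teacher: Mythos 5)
Your overall strategy is sound and genuinely different from the paper's in its mechanics: you realize $S$ as the joint-variable inf-convolution of the closed perspective function of $H^*$ with $\widehat{J}(u,s)=J(u)+\ind_{\{0\}}(s)$, obtain \eqref{eqt: prop_Legendre_S} from the identity $(\text{inf-convolution})^*=\text{sum of conjugates}$, and get lower semicontinuity from a recession-cone criterion; the paper instead writes down the candidate conjugate $\Sstar=J^*+\ind_\Vconstraint$ and verifies $\Sstar^*=S$ by direct computation, which makes closedness of $S$ automatic. For the interior of $\dom S$ you use relative-interior calculus (image of $\ri$ under a linear map, plus Remark~\ref{rem: domS_interior}), where the paper gives a hands-on $\varepsilon$--$\delta$ openness proof; both work. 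Parts (d) and (e) follow the paper's line essentially verbatim.

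There is, however, one genuine gap, in part (c). Having forced the spatial component $q=\pbar$ by slicing in $x$, you claim the time component is forced because $s\mapsto S(x,s)=\sup_{p\in\dom H}\{\langle p,x\rangle-J^*(p)-sH(p)\}$ is a supremum of affine functions of $s$ ``whose unique maximizer at $s=t$ is $\pbar$, hence $\eta=-H(\pbar)$.'' Uniqueness of the attaining index at a single point does not pin down the subdifferential of a supremum of affine functions: for example, the family consisting of the zero function together with $s\mapsto ps-(1-p)$, $p\in[0,1)$, has pointwise supremum $\max(0,s)$ near $s=0$, the supremum at $s=0$ is attained only by the zero function, yet the subdifferential there is $[0,1]$, not $\{0\}$. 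A Danskin-type conclusion needs compactness and continuity in the index, which you do not have here ($p$ ranges over $\dom H$, possibly unbounded, and $H$ is only lower semicontinuous), so near-maximizers with slopes $-H(p)$ far from $-H(\pbar)$ are not excluded by your argument. The fix is immediate from your own part (a): by Fenchel--Young, $(q,\eta)\in\partial S(x,t)$ iff $q\in\dom H$, $\eta\le -H(q)$ and $S(x,t)=\langle q,x\rangle+\eta t-J^*(q)\le\langle q,x\rangle-tH(q)-J^*(q)\le S(x,t)$, which forces $q$ to be the unique maximizer $\pbar$ of \eqref{eqt: lemma_p_opt} and, since $t>0$, $\eta=-H(\pbar)$; this is in substance the paper's route, which computes $\partial S^*$ via normal cones to $\Vconstraint$ (using essential smoothness of $H$ on the boundary of $\dom H$) and then inverts. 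With that repair, your proof of (a)--(e) is complete.
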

\begin{proof}
(a) Denote by $\Vconstraint$ the set 
\begin{equation} \label{eqt: Vconstraint}
    \Vconstraint := {\{(p,E^-)\in(\dom H)\times\R\colon E^- + H(p)\leq 0\}}.
\end{equation}
Define a function $\Sstar\colon \R^n\times\R\to \R\cup\{+\infty\}$ by 
\begin{equation} \label{eqt: defSstar}
    \Sstar(p,E^-) = J^*(p) + \ind_\Vconstraint(p,E^-),
\end{equation}
for all $p\in\R^n$ and $E^-\in\R$.
Since $\dom J^*=\R^n$ holds (see Remark~\ref{rem: properties_HandJ}), then $\Sstar$ is well-defined and proper.
Note that $\Sstar$ is convex and lower semi-continuous, i.e., $\Sstar\in \Gamma_0(\R^n\times \R)$, since $J^*$ and $H$ are both convex and lower semi-continuous.
Now, we prove that $\Sstar^* = S$. 

First, consider the case when $x\in\R^n$ and $t>0$. By definition, we have
\begin{equation}\label{eqt: pf_pde_Sstar_tpos}
    \Sstar^*(x,t) = \sup_{p\in\dom H}\left\{\sup_{E^-\in\R} \{\langle p,x\rangle + tE^- - J^*(p) - \ind_\Vconstraint(p,E^-)\}\right\}.
\end{equation}
We calculate the inner supremum as follows,
\begin{equation} \label{eqt: pf_pde_Sstar_Em}
\begin{split}
    \sup_{E^-\in\R} \{\langle p,x\rangle + tE^- - J^*(p) - \ind_\Vconstraint(p,E^-)\}
    &= \sup_{E^-\leq -H(p)} \{\langle p,x\rangle + tE^- - J^*(p) \}\\
    &= \max_{E^-\leq -H(p)} \{\langle p,x\rangle + tE^- - J^*(p) \}\\
    &= \langle p,x\rangle - tH(p)- J^*(p),
\end{split}
\end{equation}
where the first equality is implied by the definition of $V$ in~\eqref{eqt: Vconstraint}, and the second and third equalities hold because the maximizer is given by $E^- = -H(p)$.
Then, combining~\eqref{eqt: pf_pde_Sstar_tpos} and~\eqref{eqt: pf_pde_Sstar_Em}, we obtain
\begin{equation*}
\begin{split}
    \Sstar^*(x,t)
    &= \sup_{p\in\dom H}\{ \langle p,x\rangle - J^*(p) - tH(p)\} 
    = (J^* + tH)^*(x).
\end{split}
\end{equation*}
Recall that the domain of $J^*$ is the whole space $\R^n$. Then, by Proposition~\ref{prop: bkgd_sum_Legendre_infconv}, we have 
\begin{equation*}
    (J^*+tH)^*(x) = (J\conv (tH)^*)(x) = \inf_{u\in\R^n} \left\{t H^*\left(\frac{x-u}{t} \right) + J(u)\right\} = S(x,t),
\end{equation*}
where the second equality holds since $(tH)^* = tH^*(\frac{\cdot}{t})$ (see~\cite[Proposition E.1.3.1]{hiriart2004}).
Therefore, we have proved that $\Sstar^*(x,t) = S(x,t)$ for all $x\in\R^n$ and $t>0$.

Then, we consider the case when $t=0$. For all $x\in\R^n$, we have
\begin{equation*}
\begin{split}
    \Sstar^*(x,0) &= \sup_{p\in\dom H}\left\{\sup_{E^-\in\R} \{\langle p,x\rangle  - J^*(p) - \ind_\Vconstraint(p,E^-)\}\right\}\\
    &= \sup_{p\in\dom H}\{ \langle p,x\rangle - J^*(p)\} \\
    &= (J^* + \ind_{\dom H})^*(x) = S(x,0).
\end{split}
\end{equation*}

It remains to consider the case when $t<0$. In this case, for all $p\in\dom H$, it holds
\begin{equation*}
\sup_{E^-\in\R} \{tE^- - \ind_\Vconstraint(p,E^-)\} = \sup_{E^-\leq -H(p)} tE^-  = +\infty.
\end{equation*}
Hence, for all $x\in\R^n$ and $t<0$, it follows that
\begin{equation*}
\begin{split}
    \Sstar^*(x,t) &= \sup_{p\in\dom H}\left\{\sup_{E^-\in\R} \{\langle p,x\rangle + tE^- - J^*(p) - \ind_\Vconstraint(p,E^-)\}\right\} = +\infty = S(x,t).
\end{split}
\end{equation*}
Therefore, we have proved that $\Sstar^* = S$, which implies  convexity and lower semi-continuity of $S$. Since $\Sstar$ is also convex and lower semi-continuous, we have $S^* = (\Sstar^*)^* = \Sstar$ and thus,~\eqref{eqt: prop_Legendre_S}.

(b) First, for all $t>0$, by definition of $S$ in~\eqref{eqt: mainresult_defS} and definition of inf-convolution, we deduce that
\begin{equation}\label{eqt: prop51pf_b_eq1}
    \dom (x\mapsto S(x,t))
    = \dom J + t\,\dom H^*.
\end{equation}
Now, consider the case when $t=0$. By~\cite[Proposition~B.1.2.5]{hiriart2004}, the lower semi-continuous hull of $J^* + \ind_{\dom H}$ is $J^* + \ind_{\cl\dom H}$, and hence we have
\begin{equation} \label{eqt: prop51pf_S0star}
    (x\mapsto S(x,0))^* = ((J^* + \ind_{\dom H})^*)^* = J^* + \ind_{\cl\dom H}.
\end{equation}
By definition, the function $x\mapsto S(x,0)$ is in $\gmRn$, and hence there holds
\begin{equation*} 
    S(x,0) = ((x\mapsto S(x,0))^*)^* = (J^* + \ind_{\cl\dom H})^*.
\end{equation*}
Since $\dom J^*=\R^n$, 
applying Proposition~\ref{prop: bkgd_sum_Legendre_infconv} to $J^*$ and $\ind_{\cl\dom H}$, we have 
\begin{equation}\label{eqt: pf_pdeb_S0}
    S(x,0) = (J^* + \ind_{\cl\dom H})^*(x) = J\conv \ind_{\cl\dom H}^*(x) = J\conv \ind_{\dom H}^*(x).
\end{equation}
As a result, we obtain
\begin{equation}\label{eqt: prop51pf_b_eq2}
    \dom (x\mapsto S(x,0)) = \dom J + \dom (\ind_{\dom H}^*).
\end{equation}
Then,~\eqref{eqt: prop_pdeb_domS} follows from~\eqref{eqt: prop51pf_b_eq1} and~\eqref{eqt: prop51pf_b_eq2}. 

Now, we calculate the interior of $\dom S$.
Since $\dom S$ is a subset of $\R^n\times [0,+\infty)$, there holds
\begin{equation*}
    \interior\dom S\subseteq \interior \left(\R^n\times [0,+\infty)\right) = \R^n\times (0,+\infty).
\end{equation*}
Also, for all $t>0$, the intersection $(\interior\dom S) \cap (\R^n\times \{t\})$ is open in $\R^n\times\{t\}$ with respect to the subspace topology. As a result, we get
\begin{equation*}
\begin{split}
   (\interior\dom S) \cap (\R^n\times \{t\})\subseteq \ri \left(\dom S\cap (\R^n\times \{t\})\right) &= \left(\interior (\dom J + t\,\dom H^*)\right)\times \{t\}\\
   &= \left(\dom J + t\,\interior \dom H^*\right)\times \{t\},
\end{split}
\end{equation*}
where the first equality holds by~\eqref{eqt: prop51pf_b_eq1} and the definition of relative interior, and the last equality holds by~\eqref{eqt: rem47_interior}.
Therefore, we obtain
\begin{equation} \label{eqt: prop51pf_b_inclusion}
    \interior\dom S = \bigcup_{t>0}\left((\interior\dom S) \cap (\R^n\times \{t\})\right)\subseteq \bigcup_{t>0} (\dom J + t\,\interior\dom H^*)\times\{t\}\subseteq \dom S.
\end{equation}
Then, it remains to prove that the set 
\begin{equation}\label{eqt: prop51pf_b_interiorset}
    \bigcup_{t>0} (\dom J + t\,\interior\dom H^*)\times\{t\}
\end{equation}
is open. Let $(x,t)$ be a point in this set. We have $t>0$, and there exists $v\in \interior\dom H^*$ satisfying $u:=x-tv\in\dom J$.
Then, there exists $\delta>0$ such that the $\delta-$neighborhood of $v$, denoted by $\mathcal{N}_\delta(v)$, is included in $\interior\dom H^*$. Let $\epsilon\in (0,t)$ satisfy $\frac{\epsilon(1+\|v\|)}{t-\epsilon} < \delta$. For all $(y,s)\in\R^n\times\R$ satisfying $\|y-x\|<\epsilon$ and $|t-s|<\epsilon$, we have 
\begin{equation*}
    \left\|\frac{y- u}{s} - v\right\| = \left\|\frac{y- (x-tv)}{s} - v\right\|= \left\|\frac{y-x}{s} + \frac{t-s}{s}v\right\|\leq \frac{\epsilon}{s} + \frac{\epsilon \|v\|}{s}\leq \frac{\epsilon(1+\|v\|)}{t-\epsilon}<\delta.
\end{equation*}
As a result, we have $\frac{y- u}{s}\in\mathcal{N}_\delta(v)$, and hence $\frac{y- u}{s}$ is in $\interior\dom H^*$. Recall that $u$ is in $\dom J$. Then, $(y,s)$ is in the set in~\eqref{eqt: prop51pf_b_interiorset}. Therefore, the set in~\eqref{eqt: prop51pf_b_interiorset} is open, and then~\eqref{eqt: prop51_b_interiordomS} follows from~\eqref{eqt: prop51pf_b_inclusion}.

(c)  
We still use the notations $\Vconstraint$ and $\Sstar$ as in the proof of (a), whose definitions are in~\eqref{eqt: Vconstraint} and~\eqref{eqt: defSstar}, respectively.
Our goal is to calculate the subdifferential of $S$. To this aim, it suffices to calculate the subdifferential of $\Sstar$,
since $(p,E^-)\in\partial S(x,t)$ holds if and only if $(x,t)\in\partial \Sstar(p,E^-)$ is satisfied. Since $J^*$ is finite-valued, which implies that $J^*$ is continuous in the whole space $\Rn$, then by Proposition~\ref{prop: bkgd_linear_subdiff}, the subdifferential operator is linear with respect to the summation in $\Sstar$. Hence, we have 
\begin{equation} \label{eqt: pf_subdiff_G}
    \partial \Sstar (p,E^-) = \partial J^*(p)\times\{0\} + \partial \ind_\Vconstraint(p,E^-) = \partial J^*(p)\times\{0\} + N_\Vconstraint(p,E^-),
\end{equation}
where $N_\Vconstraint(p,E^-)$ denotes the normal cone of $\Vconstraint$ at $(p,E^-)$, and the last equality follows from~\eqref{eqt:subgrad_indicator}.

Now, we calculate the normal cone of $\Vconstraint$. 
First, if $p\in \interior\dom H$ and $E^-<-H(p)$ hold, then $(p,E^-)$ is in the interior of $\Vconstraint$ where the normal cone is $\{0\}$. 
Then, we consider the case when $p\in\interior\dom H$ and $E^-=-H(p)$. Since the set $\Vconstraint$ is the reflection of the epigraph of the Legendre function $H$,~\cite[Proposition~D.1.3.1]{hiriart2004} implies 
\begin{equation}\label{eqt: prop51cpf_normalcone_formula1}
    N_\Vconstraint(p,E^-) = \{\alpha(\nabla H(p), 1)\colon \alpha \geq 0\}.
\end{equation}
It remains to consider the case when $p$ is a point on the boundary of $\dom H$. Let $p$ be a vector in $(\dom H)\setminus(\interior\dom H)$, and $E^-$ be a scalar satisfying $E^-\leq-H(p)$. By definition of normal cone, $(y,s)\in N_\Vconstraint(p,E^-)$ holds if and only if $(y,s)$ verifies
\begin{equation}\label{eqt: prop51cpf_normalcone}
    0\geq \langle y, \tilde{p} - p\rangle + s(\tilde{E}^- - E^-),\quad \forall\, \tilde{p}\in\dom H, \, \tilde{E}^-\leq - H(\tilde{p}).
\end{equation}
Let $(y,s)$ satisfy~\eqref{eqt: prop51cpf_normalcone}. Choosing $\tilde{p}=p$ and $\tilde{E}^-<E^-$ yield $s\geq 0$. If $s>0$, letting $\tilde{p}$ be an arbitrary point in $\dom H$ and $\tilde{E}^- = -H(\tilde{p})$, and dividing~\eqref{eqt: prop51cpf_normalcone} by $s$, we get
\begin{equation*}
    0\geq \left\langle \frac{y}{s}, \tilde{p} - p\right\rangle  -H(\tilde{p}) - E^-\geq \left\langle \frac{y}{s}, \tilde{p} - p\right\rangle -H(\tilde{p}) +H(p).
\end{equation*}
Since the inequality holds for an arbitrary point $\tilde{p}\in\dom H$, we conclude that $\frac{y}{s}\in\partial H(p)$, which contradicts with $\partial H(p)=\emptyset$ by the assumption that $H$ is Legendre and $p$ is a boundary point. Therefore, we conclude that $s=0$ holds, and then~\eqref{eqt: prop51cpf_normalcone} is equivalent to $y\in N_{\dom H}(p)$. In other words, we have
\begin{equation}\label{eqt: prop51cpf_normalcone_formula2}
    N_\Vconstraint(p,E^-) = N_{\dom H}(p)\times \{0\}.
\end{equation}

So far, we have calculated the normal cone $N_\Vconstraint(p,E^-)$
at different points $(p,E^-)$ in~\eqref{eqt: prop51cpf_normalcone_formula1} and~\eqref{eqt: prop51cpf_normalcone_formula2}, from which we get for any $(p,E^-)\in \Vconstraint$
\begin{equation} \label{eqt: pf_subdiff_ind}
    \partial \ind_\Vconstraint(p,E^-) = \begin{dcases}
    \{(0,0)\}, &\text{if }p\in\interior\dom H,\, E^-<-H(p),\\
    \{\alpha(\nabla H(p), 1)\colon \alpha \geq 0\}, &\text{if }p\in\interior\dom H,\, E^-= -H(p),\\
    N_{\dom H}(p)\times \{0\}, & \text{if }
    p\not\in \interior\dom H,
    \, E^-\leq-H(p).
    \end{dcases}
\end{equation}
By combining~\eqref{eqt: pf_subdiff_G} and~\eqref{eqt: pf_subdiff_ind}, we obtain for any $(p,E^-)\in \Vconstraint$
\begin{equation}\label{eqt: prop51_pf_subdiff_G}
    \partial \Sstar (p,E^-) = \begin{dcases}
    \partial J^*(p)\times \{0\}, &\text{if }p\in\interior\dom H,\, E^-<-H(p),\\
    \{(u+\alpha \nabla H(p), \alpha)\colon \alpha \geq 0, u\in\partial J^*(p)\}, &\text{if }p\in\interior\dom H,\, E^-= -H(p),\\
    \left(\partial J^*(p) + N_{\dom H}(p)\right)\times \{0\}, & \text{if }p\not\in \interior\dom H,\, E^-\leq-H(p).
    \end{dcases}
\end{equation}
Let $(x,t)\in\interior\dom S$, which implies $t>0$.
Since $t>0$ holds, we conclude that $(x,t)\in\partial \Sstar(p,E^-)$ holds if and only if $(x,t)$ satisfies the second line in~\eqref{eqt: prop51_pf_subdiff_G}, i.e., 
there hold $p\in\interior\dom H$, $E^-+H(p)= 0$ and $u+t\nabla H(p)=x$ for some $u\in\partial J^*(p)$. As a result, we have
\begin{equation} \label{eqt: pf_subdiff_S}
\begin{split}
    \partial S(x,t) &= 
    \{(p,E^-)\in\R^n\times \R\colon (x,t)\in\partial \Sstar(p,E^-)\} \\
    &= \{(p,-H(p))\in\R^n\times \R \colon p\in\interior\dom H,
    u+t\nabla H(p)=x \text{ for some } u\in\partial J^*(p)\}\\
    &= \{(p,-H(p))\in\R^n\times \R \colon
    u+t\nabla H(p)=x \text{ for some } u\in\partial J^*(p)\},
\end{split}
\end{equation}
where in the last equality the constraint $p\in\interior\dom H$ is removed since it is automatically satisfied when $H$ is differentiable at $p$ (by definition of Legendre function).
According to (b), the assumption $(x,t)\in\interior\dom S$ implies $x\in \dom J + t\,\interior\dom H^*$, and hence the assumption in Proposition~\ref{proposition: existence_uniqueness} is satisfied. 
Note that the constraint in the set in the last line of~\eqref{eqt: pf_subdiff_S} is $u+t\nabla H(p)=x$ for some $u\in\partial J^*(p)$. Hence, a point $p$ verifies the constraint if and only if there exists $u\in\R^n$ such that $u$ and $p$ satisfy statement (b) in Proposition~\ref{proposition: existence_uniqueness}, which is true if and only if $p$ is the unique maximizer of~\eqref{eqt: lemma_p_opt}.
Therefore, the set $\partial S(x,t)$ is a singleton set containing $(\pbar, -H(\pbar))$, where $\pbar$ is the unique maximizer of~\eqref{eqt: lemma_p_opt}. 
Since $(x,t)$ is in the interior of $\dom S$,
applying~\cite[Corollary~D.2.1.4]{hiriart2004} to a neighborhood of $(x,t)$ contained in $\dom S$, we conclude that
$S$ is differentiable at $(x,t)$, and the gradient is the element in the singleton set $\partial S(x,t)$, which proves~\eqref{eqt: gradS}.

(d) We have proved in (c) that $S$ is differentiable at $(x,t)\in\interior\dom S$, and the gradient satisfies~\eqref{eqt: gradS}. As a result, the first line in~\eqref{eqt: mainresult_HJ_S} is satisfied. Then, it suffices to check the initial condition in~\eqref{eqt: mainresult_HJ_S}. 
By the assumption on $\icHJS$, we conclude that $\icHJS^* = J^*+\ind_{\cl\dom H}$ holds. 
Since $\icHJS$ is in $\gmRn$, we have
\begin{equation*}
    \icHJS(x) = (\icHJS^*)^*(x) =  (J^*+\ind_{\cl \dom H})^*(x) = S(x,0),
\end{equation*}
where the last equality holds by~\eqref{eqt: pf_pdeb_S0}.
Therefore, the initial condition is satisfied, and $S$ solves the HJ PDE~\eqref{eqt: mainresult_HJ_S}.

(e) 
For all $x\in\R^n$, by~\eqref{eqt: pf_pdeb_S0}, we have
\begin{equation}\label{eqt: pf_S0_equals_J}
    S(x,0) = \sup_{p\in\cl \dom H} \{\langle x,p\rangle - J^*(p)\} \leq \sup_{p\in\R^n} \{\langle x,p\rangle - J^*(p)\} = J(x).
\end{equation}
Now, let $x\in\R^n$ satisfy $\partial J(x)\cap \cl\dom H\neq \emptyset$. Let $q$ be a vector in $\partial J(x)\cap \cl\dom H$. 
Since $q\in\partial J(x)$, then $q$ is a maximizer of $p\mapsto \langle x,p\rangle - J^*(p)$. And $q$ is also in $\cl \dom H$, which implies that
\begin{equation*}
    q \in \argmax_{p\in\cl\dom H} \{\langle x,p\rangle - J^*(p)\}.
\end{equation*}
Then, $q$ is a maximizer of the two maximization problems in~\eqref{eqt: pf_S0_equals_J}, and hence the inequality in~\eqref{eqt: pf_S0_equals_J} becomes equality, which implies $S(x,0)=J(x)$. Moreover, by~\eqref{eqt: prop51pf_S0star}, the Legendre-Fenchel transform of $y\mapsto S(y,0)$ is $J^* + \ind_{\cl\dom H}$. As a result, $p\in \partial_x S(x,0)$ is verified if and only if there holds
\begin{equation*}
    \langle x,p\rangle = S(x,0) + J^*(p) + \ind_{\cl\dom H}(p) = J(x) + J^*(p) + \ind_{\cl \dom H}(p),
\end{equation*}
which is true if and only if $p$ is in $\partial J(x)\cap \cl\dom H$. Therefore, we have $\partial_x S(x,0) = \partial J(x) \cap \cl\dom H$. 
\qed
\end{proof}

\begin{remark}
We use the notation $(p,E^-)$ to denote the dual variables due to their physics meaning. In HJ theory, the variables $x$ and $t$ give the location and time of a particle. The solution to the HJ PDE gives the action, which is denoted by $S$. The dual variable of $x$ gives the momentum of the particle, which is usually denoted by $p$. The dual variable of $t$ is the time derivation of the action, which equals the negative energy ($-H(p)$) by the HJ PDE. Hence, we use the notation $E^-$, where $E$ denotes the energy and the minus sign in the superscript denotes the negative sign in the negative energy.
\end{remark}

\begin{remark}
For the sake of  completeness, we calculate the subdifferential of $S$ at $(x,t)$ for all $(x,t)$ in $\dom S$.
The case for $(x,t)\in\interior\dom S$ is calculated in Proposition~\ref{prop: HJPDE}(c).
Then, it remains to consider the case for $(x,t)\in (\dom S)\setminus (\interior \dom S)$.

First, let $t>0$, and hence $x$ is a point in $(\dom J + t\,\dom H^*)\setminus (\dom J + t\,\interior\dom H^*)$ by Proposition~\ref{prop: HJPDE}(b).
Note that~\eqref{eqt: prop51_pf_subdiff_G} and~\eqref{eqt: pf_subdiff_S} still hold since $t$ is not equal to zero. As a result, if $(p,E^-)$ is a point in $\partial S(x,t)$, then there exists $u\in\partial J^*(p)\subseteq \dom J$ satisfying $u+t\nabla H(p) = x$. Let $v=\nabla H(p)$, which implies that $v$ is in the interior of $\dom H^*$ (see Remark~\ref{rem: properties_HandJ}). Hence, we have $x = u+tv\in\dom J + t\,\interior\dom H^*$, which leads to a contradiction. Therefore, we get $\partial S(x,t) = \emptyset$ in this case.

Then, we consider the case when $t=0$. By~\eqref{eqt: prop51_pf_subdiff_G}, $(x,0)\in \partial \Sstar(p,E^-)$ is satisfied if and only if one of the following two cases hold
\begin{itemize}
    \item[(a)] $x\in \partial J^*(p)$, $p\in\interior\dom H$ and $E^-\leq -H(p)$ (which corresponds to the first and second lines in~\eqref{eqt: prop51_pf_subdiff_G}),
    \item[(b)] $x\in\partial J^*(p) + N_{\dom H}(p)$, $p\in(\dom H)\setminus(\interior\dom H)$ and $E^-\leq -H(p)$ (which corresponds to the third line in~\eqref{eqt: prop51_pf_subdiff_G}).
\end{itemize}
Since $N_{\dom H}(p) = \{0\}$ if $p$ is in the interior of $\dom H$, the two cases above can be combined to the conditions that $x\in\partial J^*(p) + N_{\dom H}(p)$, $p\in \dom H$ and $E^-\leq -H(p)$. Note that there holds $\partial J^*(p) + N_{\dom H}(p) = \partial (J^* + \ind_{\cl\dom H})(p)$ for all $p\in \dom H$.
As a result, $(p,E^-)\in\partial S(x,0)$ is verified if and only if 
\begin{equation*}
    p\in (\partial (J^* + \ind_{\cl\dom H})^*(x)) \cap \dom H
    = \partial_x S(x,0)\cap \dom H\text{ and } E^-\leq -H(p),
\end{equation*}
where $\partial_x S(x,0)$ denotes the subdifferential of the map $y\mapsto S(y,0)$. Hence, in this case we have
\begin{equation*}
    \partial S(x,0) = \{(p,E^-)\in(\partial_x S(x,0)\cap \dom H)\times \R\colon E^-\leq -H(p)\}.
\end{equation*}
Therefore, the subdifferential of $S$ at each point $(x,t)$ in $\dom S$ reads
\begin{equation*}
\partial S(x,t) = \begin{dcases}
\{(\pbar, -H(\pbar))\}, & \text{if }(x,t)\in\interior\dom S,\\
\{(p,E^-)\in(\partial_x S(x,0)\cap \dom H)\times \R\colon E^-\leq -H(p)\}, & \text{if }(x,t)\in \dom S,\,t=0\\
\emptyset, & \text{otherwise},
\end{dcases}
\end{equation*}
where $\pbar$ in the first line is the unique maximizer in~\eqref{eqt: lemma_p_opt} at $(x,t)$.
\end{remark}

\begin{remark}
The initial condition in~\eqref{eqt: S_equiv_formula} is the initial condition of~\eqref{eqt: mainresult_HJ_S}, i.e, $\icHJS = (J^*+ \ind_{\dom H})^*$ holds. 
Note that the initial condition in~\eqref{eqt: intro_HJ_S} is $J$ which may be different from $\icHJS$.
Specifically, if $\dom J^*$ is included in $\cl\dom H$, then the initial condition $\icHJS$ in~\eqref{eqt: mainresult_HJ_S} equals the initial condition $J$ in~\eqref{eqt: intro_HJ_S}, i.e., $\icHJS = (J^*+ \ind_{\dom H})^* = J$ is satisfied.
\end{remark}

In the sequel, we provide the proofs of Propositions~\ref{prop: mainresult_HJS} and~\ref{prop: mainresult_HJF}.

\textbf{Proof of Proposition~\ref{prop: mainresult_HJS}.}
By Proposition~\ref{prop: HJPDE}(d), the function $S$ solves the HJ PDE~\eqref{eqt: mainresult_HJ_S}.

Let $t>0$ and $x\in \dom J + t\,\interior\dom H^*$. Let $\ubar$ and $\pbar$ be the optimizers in the optimization problems~\eqref{eqt: lemma_u_opt} and~\eqref{eqt: lemma_p_opt}, whose existence and uniqueness are established by Proposition~\ref{proposition: existence_uniqueness}.
Due to Proposition~\ref{prop: HJPDE}(b), $(x,t)$ is a point in $\interior\dom S$.
By Proposition~\ref{prop: HJPDE}(c), $S$ is differentiable at $(x,t)$, and the spatial gradient $\nabla_x S(x,t)$ satisfies
\begin{equation}\label{eqt: prop33pf_gradS}
    \nabla_x S(x,t) = \pbar.
\end{equation}
Then, by~\eqref{eqt: prop32pf_equiv_var2_argmin}, Proposition~\ref{proposition: existence_uniqueness} and~\eqref{eqt: prop33pf_gradS}, the unique minimizer $\bar{\imageq}$ in~\eqref{eqt: mainresult_var2} verifies
\begin{equation*}
\bar\imageq = \frac{x-\ubar}{t} = \nabla H(\pbar) = \nabla H(\nabla_x S(x,t)),
\end{equation*}
which proves~\eqref{eqt: mainresult_vbar_S}.
\qed

\bigbreak

\textbf{Proof of Proposition~\ref{prop: mainresult_HJF}.}
First, let $(x,t)\in\interior\domF$. We show that $F$ is continuously differentiable and satisfies~\eqref{eqt: mainresult_HJ_F} at $(x,t)$. By definition of $\domF$, we have $t>0$ and $\frac{x}{t}\in\interior \dom H^*$. By assumption that $0\in\dom J$, we have $x\in \dom J + t\, \interior \dom H^*$, and hence $(x,t)$ satisfies the assumption of Proposition~\ref{prop: mainresult_Moreau}.
By Proposition~\ref{prop: mainresult_Moreau} and definition of $F$ and $S$, we obtain
\begin{equation}\label{eqt: pf_HJF_1}
    F(x,t) + S(x,t) = (tH)^*(x) = tH^*\left(\frac{x}{t}\right).
\end{equation}
Since $H^*$ is differentiable at $\frac{x}{t}$ (see Remark~\ref{rem: properties_HandJ}), and $S$ is differentiable at $(x,t)$ by Proposition~\ref{prop: HJPDE}(b)-(c), it follows that $F$ is differentiable at $(x,t)$. Moreover, taking derivative in~\eqref{eqt: pf_HJF_1} and applying the HJ PDE~\eqref{eqt: mainresult_HJ_S}, we get
\begin{equation}\label{eqt: pf_F_grad}
\begin{split}
    \nabla_x F(x,t) &= \nabla H^*\left(\frac{x}{t}\right) - \nabla_x S(x,t),\\
    \frac{\partial F(x,t)}{\partial t} &= H^*\left(\frac{x}{t}\right) - \left\langle \frac{x}{t}, \nabla H^*\left(\frac{x}{t}\right)\right\rangle -\frac{\partial S(x,t)}{\partial t}\\
    &= H^*\left(\frac{x}{t}\right) - \left\langle \frac{x}{t}, \nabla H^*\left(\frac{x}{t}\right)\right\rangle + H(\nabla_x S(x,t))\\
    &= H^*\left(\frac{x}{t}\right) - \left\langle \frac{x}{t}, \nabla H^*\left(\frac{x}{t}\right)\right\rangle + H\left(\nabla H^*\left(\frac{x}{t}\right) - \nabla_x F(x,t)\right).
\end{split}
\end{equation}
Applying Proposition~\ref{prop: bkgd_conjugate_equal_inverse} to the function $H^*$, we get
\begin{equation*}
    \begin{split}
    \frac{\partial F(x,t)}{\partial t} 
    &= -H\left(\nabla H^*\left(\frac{x}{t}\right)\right) + H\left(\nabla H^*\left(\frac{x}{t}\right) - \nabla_x F(x,t)\right).
\end{split}
\end{equation*}
Therefore, the differential equation in~\eqref{eqt: mainresult_HJ_F} is satisfied. 

Now, let $d$ be a vector in $\interior\dom H^*$ and $x$ be a vector in $\dom (\ind_{\dom H}^*)$. We prove that~\eqref{eqt: mainresult_HJ_F_continuity} holds.
By Proposition~\ref{prop: bkgd_asym2}, we have $\ind_{\dom H}^* = (H^*)'_\infty$. As a result, $x$ is a vector in $\dom (H^*)'_\infty$, and hence by definition of the asymptotic function we have $\frac{x}{t}+d\in \dom H^*$ for all $t>0$. Since $d$ is in the interior of $\dom H^*$,~\cite[Lemma~A.2.1.6]{hiriart2004} implies that $\frac{x}{t} + d$ is also in $\interior\dom H^*$ for all $t>0$. Then, by Proposition~\ref{prop: HJPDE}(a)-(b) and the assumption that $0\in\dom J$, the point $(x+td,t)$ is in the interior of $\dom S$. Hence, by Proposition~\ref{prop:bkgd_convex_cont}, $S$ is continuous on the straight line connecting $(x,0)$ and $(x+td,t)$, i.e., there holds
\begin{equation}\label{eqt:pf_prop34_contS}
    \lim_{t\to 0^+} S(x+td,t) = S(x,0).
\end{equation}
Also, by definition~\eqref{eqt:defasym}, we obtain
\begin{equation}\label{eqt:pf_prop34_contHstar}
    \lim_{t\to 0^+} tH^*\left(\frac{x+td}{t}\right)=
    \lim_{t\to 0^+} t\left(H^*\left(d+\frac{x}{t}\right) - H^*(d)\right)
    = (H^*)'_\infty(x).
\end{equation}
Note that~\eqref{eqt: pf_HJF_1} holds at $(x+td,t)$ since we have proved that $\frac{x}{t} + d$ is a point in $\interior\dom H^*$. Then, by straightforward computation using~\eqref{eqt: pf_HJF_1},~\eqref{eqt:pf_prop34_contS},~\eqref{eqt:pf_prop34_contHstar}, Proposition~\ref{prop: bkgd_asym2},~\eqref{eqt: mainresult_defS} and~\eqref{eqt: mainresult_defF}, we have
\begin{equation*}
\begin{split}
    \lim_{t\to 0^+} F(x+td,t)
    &= \lim_{t\to 0^+} tH^*\left(\frac{x+td}{t}\right) - \lim_{t\to 0^+} S(x+td,t)
    = (H^*)'_\infty(x) - S(x,0)\\
    &= \ind_{\dom H}^*(x) - (J^*+\ind_{\dom H})^*(x)
    = F(x,0),
\end{split}
\end{equation*}
which proves~\eqref{eqt: mainresult_HJ_F_continuity}.

Thus, it remains to show~\eqref{eqt: mainresult_barv_F}. Let $t>0$ and $x\in t\,\interior\dom H^*$. Denote by $\bar{\imageq}$ be the unique minimizer in~\eqref{eqt: mainresult_var1}. By Propositions~\ref{prop: mainresult_Moreau} and~\ref{prop: mainresult_HJS}, we have
\begin{equation*}
    \bar{\imageq} = \nabla H(\nabla_x S(x,t)) = \nabla H\left(\nabla H^*\left(\frac{x}{t}\right) - \nabla_x F(x,t)\right),
\end{equation*}
where the second equality holds according to the first equation in~\eqref{eqt: pf_F_grad}.
\qed

\section{Poisson noise model} \label{sec: Poisson}
 The following variational problem is usually employed in the literature to handle Poisson noise (see, e.g., \cite{bertero_2009,poisson_primal_dual,le_chartrand_asaki}),
\begin{equation}\label{eqt: Poisson_q_opt}
    \min_{\imageq\in(0,+\infty)^n} \left\{\sum_{i=1}^n\left(t\imageq_i - x_i\log \imageq_i + x_i\log\left(\frac{x_i}{t}\right) - x_i\right) + 
    f(\imageq)\right\},
\end{equation}
where $f(\imageq)$ is a regularization term. 
Define $H\in\gmRn$ by
\begin{equation}\label{eqt: defH_Poisson}
    H(\pvar) = \sum_{i=1}^n e^{\pvar_i}, \quad \forall \, \pvar=(\pvar_1,\dots, \pvar_n)\in\R^n.
\end{equation}
The function $H$ is a Legendre function, and hence it satisfies assumption (A1). By straightforward computation, its Legendre-Fenchel transform $H^*$ is given by
\begin{equation*}
    H^*(y) = \sup_{\pvar\in\R^n} \sum_{i=1}^n (\pvar_i y_i - e^{\pvar_i}) = 
    \begin{dcases}
    \sum_{i=1}^n (y_i\log y_i - y_i), & \text{if } y\in [0,+\infty)^n,\\
    +\infty, & \text{otherwise}.
    \end{dcases}
\end{equation*}
Hence, the Bregman distance $D_{H^*}(\frac{x}{t}, \imageq)$ reads
\begin{equation*}
\begin{split}
D_{H^*}\left(\frac{x}{t}, \imageq\right)
&= \sum_{i=1}^n \left(\frac{x_i}{t} \log \frac{x_i}{t} - \frac{x_i}{t} - \imageq_i\log \imageq_i + \imageq_i - \left(\frac{x_i}{t} - \imageq_i\right)\log \imageq_i\right)\\
&= \sum_{i=1}^n \left(\frac{x_i}{t} \log \frac{x_i}{t} - \frac{x_i}{t} + \imageq_i - \frac{x_i}{t}\log \imageq_i\right).
\end{split}
\end{equation*}
Therefore, the variational model~\eqref{eqt: Poisson_q_opt}
with Poisson noise can be expressed as 
\begin{equation}\label{eqt:variational_model_Bregman_f}
    \min_{\imageq\in(0,+\infty)^n} \left\{tD_{H^*}\left(\frac{x}{t},\imageq\right) + f(\imageq)\right\}.
\end{equation}
If there exists a 1-coercive function $J\in\gmRn$ satisfying
\begin{equation}\label{eqt:poissonf}
    f(\imageq) = J^*(\nabla H^*(\imageq)) = J^*(\log \imageq_1, \dots, \log \imageq_n),
\end{equation}
for all $\imageq\in(0,+\infty)^n$, then the variational model~\eqref{eqt: Poisson_q_opt} is in the same form as~\eqref{eqt: mainresult_var1} with $H$ defined in~\eqref{eqt: defH_Poisson}. 

By Proposition~\ref{prop: mainresult_Moreau}, this is related to an additive model in~\eqref{eqt: mainresult_var2}, which in this specific example reads
\begin{equation} \label{eqt: Poisson_additive_q}
    \min_{\imageq\in\R^n} \left\{J(x-t\imageq) + t\sum_{i=1}^n (\imageq_i\log \imageq_i - \imageq_i)\right\}.
\end{equation}
Furthermore, by Propositions~\ref{prop: mainresult_HJS} and~\ref{prop: mainresult_HJF}, these variational problems are related to two PDEs~\eqref{eqt: mainresult_HJ_S} and~\eqref{eqt: mainresult_HJ_F}, which,
in this case, become
\begin{equation*}
\frac{\partial S}{\partial t}(x,t) + \sum_{i=1}^n \exp\left(\frac{\partial S}{\partial x_i}(x,t)\right) = 0,
\end{equation*}
and
\begin{equation*}
    \frac{\partial F}{\partial t}(x,t)  - \frac{1}{t}\sum_{i=1}^n x_i\left(\exp\left(-\frac{\partial F}{\partial x_i}(x,t)\right)-1\right)=0.
\end{equation*}

\begin{remark}
An example of penalizing function in~\eqref{eqt:poissonf} is
\begin{equation}\label{eqt:tvlog_prior}
f(v)=\tv(\log v_1,\dots, \log v_n).
\end{equation}
This  can be written as in~\eqref{eqt:poissonf}, where the function $J^*$ is the total variation, while
  $J$  is the indicator ball of Meyer's norm  (see \cite[Definition 10, page 30]{meyer2001oscillating}) and  satisfies assumption (A1). The corresponding variational denoising model~\eqref{eqt: Poisson_q_opt} becomes
\begin{equation*}
    \min_{v\in\R^n\colon v_i>0} \left\{\sum_{i=1}^n\left(tv_i - x_i\log v_i + x_i\log\left(\frac{x_i}{t}\right) - x_i\right) + \tv(\log v_1, \dots, \log v_n)\right\}.
\end{equation*}
This particular regularization  has been dealt with in the literature - see~\cite{poisson_logtv1,poisson_mult_logtv}.
\end{remark}

\begin{remark}\label{rem:poisson_tv_prior}

A widely used regularization term is the total variation $\tv(v)$. However, this cannot be expressed as~\eqref{eqt:poissonf}, since there is no convex lower semi-continuous function $J$ satisfying $J^*(\log v_1, \dots, \log v_n) = \tv(v)$. 
\end{remark}

\begin{remark} \label{rem:meaningt_poisson}
Note that the parameter $t$ in~\eqref{eqt: Poisson_q_opt} (which is the time variable in the corresponding HJ PDEs) is related to the exposure time of the sensor. To be specific, let $v$ be the gray level array of the original image, and assume there is no motion and the gray level image $v$ does not change over time. The observed image is a sample from a Poisson distribution whose rate equals $tv$, where $t$ is the exposure time of the sensor (see~\cite{Tendero2016Coded}). The probability mass function of the Poisson distribution at $x\in\mathbb{Z}^n$ equals
\begin{equation}\label{eqt:stat_poisson}
    P(x|v) = \prod_{i=1}^n\frac{(tv_i)^{x_i}e^{-tv_i}}{x_i!} = \exp\left(-\sum_{i=1}^n (tv_i - x_i\log v_i - x_i\log t + \log(x_i!))\right).
\end{equation}
Then, the corresponding MAP estimator for the denoising problem with Poisson noise reads
\begin{equation*}
\begin{split}
    \bar{v} &= \argmin_{v\in (0,+\infty)^n} \left\{\sum_{i=1}^n\left(tv_i - x_i\log v_i - x_i\log t + \log(x_i!)\right) + f(v)\right\}\\
    &= \argmin_{v\in (0,+\infty)^n} \left\{\sum_{i=1}^n(tv_i - x_i\log v_i) + f(v)\right\},
\end{split}
\end{equation*}
which is equivalent to  model~\eqref{eqt: Poisson_q_opt}.
Therefore, the parameter $t$ in~\eqref{eqt: Poisson_q_opt} is the exposure time of the sensor, and the parameter $x$ in~\eqref{eqt: Poisson_q_opt} is the total number of photons the sensor received during the exposure time.
\end{remark}

\section{Multiplicative noise model} \label{sec: multiplicative}
The following variational problem~\cite{Aubert2008Variational}  has been quite often  employed for denoising problems with multiplicative noise, 
\begin{equation} \label{eqt: multiplicative_q}
    \min_{\imageq\in(0,+\infty)^n} \left\{t\sum_{i=1}^n\left(-1+\log \imageq_i +  \frac{x_i/t}{\imageq_i} -\log\frac{x_i}{t}\right)+ f(\imageq)\right\},
\end{equation}
where $\frac{x}{t}$ is the observed image, $t>0$ is a positive parameter, and $f(\imageq)$ is the regularization term. 
Let $H\in\gmRn$ be defined by
\begin{equation} \label{eqt: defH_multiplicative}
    H(p):= 
    \begin{dcases}
    \sum_{i=1}^n (-1-\log(-p_i)), & \text{if } p = (p_1,\dots, p_n)\in (-\infty, 0)^n,\\
    +\infty, &\text{otherwise.}
    \end{dcases}
\end{equation}
The function $H$ is a Legendre function with domain $(-\infty,0)^n$, hence it satisfies assumption (A1). Its Legendre-Fenchel transform $H^*$ reads
\begin{equation*}
    H^*(y) = \sup_{p\in\R^n} \sum_{i=1}^n (y_ip_i + 1+\log(-p_i))
    = 
    \begin{dcases}
    -\sum_{i=1}^n \log y_i, & \text{if }y=(y_1,\dots, y_n)\in (0,+\infty)^n,\\
    +\infty, &\text{otherwise},
    \end{dcases}
\end{equation*}
which is the Burg entropy.
The Bregman distance $D_{H^*}(\frac{x}{t}, \imageq)$ equals
\begin{equation*}
D_{H^*}\left(\frac{x}{t}, \imageq\right)
= \sum_{i=1}^n\left( -\log\frac{x_i}{t} + \log \imageq_i + \frac{1}{\imageq_i}\left(\frac{x_i}{t} - \imageq_i\right)\right)
= \sum_{i=1}^n\left( -\log\frac{x_i}{t} + \log \imageq_i + \frac{x_i/t}{\imageq_i}-1\right).
\end{equation*}
Note that this Bregman distance $D_{H^*}$ is the Itakura-Saito distance. One can see that the variational model~\eqref{eqt: multiplicative_q} can be written as~\eqref{eqt:variational_model_Bregman_f}, with $H$ defined in~\eqref{eqt: defH_multiplicative}.

If there exists a 1-coercive function $J\in\gmRn$ such that $f(\imageq)$ satisfies
\begin{equation}\label{eqt:prior_multiplicative}
    f(\imageq) = J^*(\nabla H^*(\imageq)) = J^*\left(-\frac{1}{\imageq_1},\dots, -\frac{1}{\imageq_n}\right),\quad\forall\, \imageq\in(0,+\infty)^n,
\end{equation}
then the variational problem~\eqref{eqt: multiplicative_q} is in the form of~\eqref{eqt: mainresult_var1} with $H$ defined in~\eqref{eqt: defH_multiplicative}.

By Proposition~\ref{prop: mainresult_Moreau}, this variational problem is related to the additive model~\eqref{eqt: mainresult_var2}, which in this case reads
\begin{equation*}
    \min_{\imageq\in(0,+\infty)^n} \left\{J(x-t\imageq) - t\sum_{i=1}^n \log\imageq_i\right\}.
\end{equation*}
Furthermore, by Propositions~\ref{prop: mainresult_HJS} and~\ref{prop: mainresult_HJF}, these variational problems are related to two PDEs~\eqref{eqt: mainresult_HJ_S} and~\eqref{eqt: mainresult_HJ_F}, which
in this case become
\begin{equation*}
\frac{\partial S}{\partial t}(x,t) -\sum_{i=1}^n\left(1+\log \left(-\frac{\partial S}{\partial x_i}(x,t)\right) \right)= 0,
\end{equation*}
and
\begin{equation*}
    \frac{\partial F}{\partial t}(x,t)  + \sum_{i=1}^n \log\left(1+\frac{x_i}{t}\frac{\partial F}{\partial x_i}(x,t)\right) =0.
\end{equation*}

\begin{remark} \label{rem:mul_tvlog_prior}
The non-convex regularization term in~\eqref{eqt:tvlog_prior} has been employed by several authors for multiplicative denoising (see~\cite{Shi2008Nonlinear,Jin2010Analysis,huang_2009}). The corresponding variational model reads
\begin{equation}\label{eqt:mul_tvlog}
    \min_{\imageq\in(0,+\infty)^n} \left\{t\sum_{i=1}^n\left(-1+\log \imageq_i +  \frac{x_i/t}{\imageq_i} -\log\frac{x_i}{t}\right)+ \tv(\log\imageq_1,\dots, \log\imageq_n)\right\}.
\end{equation}
Note that  $\tv(\log\imageq_1,\dots, \log\imageq_n)$ cannot be a particular instance  of~\eqref{eqt:prior_multiplicative} for some convex function $J$ in $\gmRn$. Therefore, our model~\eqref{eqt: mainresult_var1} does not cover the variational model~\eqref{eqt:mul_tvlog}.
\end{remark}

\begin{remark} \label{rem:meaningt_multiplicative}
Here we discuss the meaning of the parameter $t$ in~\eqref{eqt: multiplicative_q}, which is the time variable in the corresponding HJ PDEs. 
The statistical model for denoising problems with multiplicative noise is explained in~\cite{Aubert2008Variational}.  The observation $J_i$ on the $i$-th pixel  in the model is the average of $L$ observations $I_1,\dots, I_L$, which are i.i.d. sampled from the exponential distribution with rate $\frac{1}{v_i}$.
Hence, the distribution of $J_i$ is the Gamma distribution with parameters $L$ and $\frac{L}{v_i}$, whose density function is
\begin{equation*}
    f_{J_i}(J_i=z_i | v_i) = \left(\frac{L}{v_i}\right)^L\frac{1}{\Gamma(L)} z_i^{L-1}e^{-Lz_i/v_i}, \quad \forall z_i\in [0,+\infty).
\end{equation*}
Since the pixels $J_i$ are independent from each other, the density function of the whole image $J = (J_1,\dots, J_n)$ equals
\begin{equation}\label{eqt:stat_multiplicative}
\begin{split}
    f_{J}(J=z | v) &= \prod_{i=1}^n \left(\frac{L}{v_i}\right)^L\frac{1}{\Gamma(L)} z_i^{L-1}e^{-Lz_i/v_i}\\
    &= \exp\left(-\sum_{i=1}^n\left(-L\log L + L\log v_i + \log \Gamma(L) - (L-1)\log z_i + \frac{Lz_i}{v_i}\right)\right),
\end{split}
\end{equation}
for all $z\in [0,+\infty)^n$.
Hence, the corresponding MAP estimator for the denoising problem with multiplicative noise reads
\begin{equation*}
\begin{split}
    \bar{v} &= \argmin_{v\in(0,+\infty)^n}\left\{\sum_{i=1}^n\left(-L\log L + L\log v_i + \log \Gamma(L) - (L-1)\log z_i + \frac{Lz_i}{v_i}\right) + f(v)\right\}\\
    &= \argmin_{v\in(0,+\infty)^n}\left\{\sum_{i=1}^n\left(L\log v_i  + \frac{Lz_i}{v_i}\right) + f(v)\right\},
\end{split}
\end{equation*}
which is equivalent to~\eqref{eqt: multiplicative_q} with $t=L$ and $x= Lz$. 

In other words, the meaning of the time variable $t$ is the number of the observed images, and the meaning of the spatial variable $x$ is the summation of the $t$ observed images.
\end{remark}

\section{An asymptotic result for the variational models}\label{sec:asymptotic}
In this section, we provide an asymptotic result for the variational problems in~\eqref{eqt: prop31_equal_minimizer}. We prove a convergence result of the minimizers under the following assumption.
\begin{itemize}
    \item[(A2)] Let $\vz$ be a vector in $\interior\dom H^*$ and
    $\{\tk\}$ be a sequence of positive numbers going to $+\infty$. Let $\{\xk\}$ be a sequence in $\Rn$ such that $\displaystyle\left\{\frac{\xk}{\tk}\right\}$ converges to $\vz$ as $k$ goes to infinity. Assume $\xk - \tk\vz$ is a vector in $\dom J$ for all $k\in\N$. Further assume that $\displaystyle{\left\{\frac{J(\xk-\tk \vz)}{\tk}\right\}}$ is bounded from above.
\end{itemize}

In the remainder of this paper, the notation $\xk$ stands for the $k$-th vector in a sequence, to avoid ambiguity with $k$-th power of a number $x$ (denoted by $x^k$) and the $k$-th component of a vector $x$ (denoted by $x_k$).
Note that the last statement in (A2) is a technical assumption which is for instance satisfied if $J$ is bounded in its domain (e.g., when $J$ is the indicator function of a closed convex set).
Under this setup, the following proposition proves that the sequence of the minimizers of the two models in~\eqref{eqt: prop31_equal_minimizer} converges to $\vz$ as $k$ goes to infinity. 

\begin{proposition} \label{prop:asymp_tinf}
Assume (A1) and (A2) hold. 
Let $\vk\in\Rn$ be the minimizer in~\eqref{eqt: prop31_equal_minimizer} at $(\xk, \tk)$. Then there is $N\in\N$ such that $\vk$ exists and is unique for all $k\geq N$. Moreover, there holds $\lim_{k\to\infty} \vk = \vz$.
\end{proposition}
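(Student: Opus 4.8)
The plan is to combine the equivalence of the two variational models in Proposition~\ref{prop: mainresult_Moreau} with the $1$-coercivity of $J$ and the Legendre structure of $H^*$. Throughout, set $\wk:=\xk/\tk$ and $p^{(0)}:=\nabla H^*(\vz)$, and recall that, by the change of variables in the proof of Proposition~\ref{prop: mainresult_Moreau} (cf.~\eqref{eqt: prop32pf_equiv_var2_argmin}), whenever it is defined $\vk$ is the unique minimizer of $v\mapsto \tk H^*(v)+J(\xk-\tk v)$, and it also minimizes the functional in~\eqref{eqt: mainresult_var1}. First I would settle existence and uniqueness: since $\wk\to\vz\in\interior\dom H^*$ and $\interior\dom H^*$ is open, there is $N\in\N$ with $\wk\in\interior\dom H^*$ for all $k\ge N$; moreover $\xk=(\xk-\tk\vz)+\tk\vz\in\dom J+\tk\,\interior\dom H^*$ for every $k$ by (A2). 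Hence the hypothesis of Proposition~\ref{prop: mainresult_Moreau} holds at $(\xk,\tk)$ for $k\ge N$, so $\vk$ exists and is unique there.

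Next I would show that $\{\vk\}_{k\ge N}$ is bounded. Testing $\vk$ against $\vz$ in the additive model and dividing by $\tk$ gives
\[
H^*(\vk)+\tfrac{1}{\tk}J(\xk-\tk\vk)\ \le\ H^*(\vz)+\tfrac{1}{\tk}J(\xk-\tk\vz)\ \le\ B,
\]
where $B:=H^*(\vz)+\sup_k \tk^{-1}J(\xk-\tk\vz)<+\infty$ by (A2). If $\|\vk\|\to\infty$ along a subsequence, then $\xk-\tk\vk=\tk(\wk-\vk)$ has norm tending to $+\infty$ (as $\{\wk\}$ is bounded), so $1$-coercivity of $J$ yields, for any fixed $R>\|p^{(0)}\|$ and all large indices, $\tk^{-1}J(\xk-\tk\vk)\ge R\|\wk-\vk\|\ge R(\|\vk\|-\sup_k\|\wk\|)$; together with the convexity bound $H^*(\vk)\ge H^*(\vz)+\langle p^{(0)},\vk-\vz\rangle$, the left-hand side above exceeds $(R-\|p^{(0)}\|)\|\vk\|$ plus a constant, hence $\to+\infty$, contradicting $\le B$. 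So $\{\vk\}_{k\ge N}$ is bounded.

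It then remains to show every subsequential limit equals $\vz$. Let $\vk\to\tilde v$ along a subsequence. If $\tilde v\notin\interior\dom H^*$ then $\tilde v\ne\vz$, so $\wk-\vk\to\vz-\tilde v\ne0$ and $\|\xk-\tk\vk\|\to\infty$; by $1$-coercivity of $J$, $\tk^{-1}J(\xk-\tk\vk)=\|\wk-\vk\|\cdot J(\xk-\tk\vk)/\|\xk-\tk\vk\|\to+\infty$, while lower semicontinuity of $H^*$ gives $\liminf H^*(\vk)\ge H^*(\tilde v)>-\infty$ (the value being $+\infty$, hence already contradicting $H^*(\vk)\le B$, if $\tilde v\notin\dom H^*$); this contradicts the bound $\le B$. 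So $\tilde v\in\interior\dom H^*$. Now, using $\wk\in\interior\dom H^*$ as a competitor in~\eqref{eqt: mainresult_var1} and $D_{H^*}(\wk,\wk)=0$, we have $J^*(\nabla H^*(\vk))+\tk D_{H^*}(\wk,\vk)\le J^*(\nabla H^*(\wk))$; since $\nabla H^*$ is continuous on $\interior\dom H^*$ and $J^*$ is finite, hence continuous, on $\Rn$, along the subsequence $J^*(\nabla H^*(\vk))\to J^*(\nabla H^*(\tilde v))$, $J^*(\nabla H^*(\wk))\to J^*(p^{(0)})$, and $D_{H^*}(\wk,\vk)\to D_{H^*}(\vz,\tilde v)\ge0$, all finite. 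If $D_{H^*}(\vz,\tilde v)>0$ the term $\tk D_{H^*}(\wk,\vk)$ blows up and the inequality fails for large indices; hence $D_{H^*}(\vz,\tilde v)=0$, and strict convexity of the Legendre function $H^*$ on $\interior\dom H^*$ forces $\tilde v=\vz$. A bounded sequence whose only limit point is $\vz$ converges to $\vz$.

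The hard part is the equicoercivity step that keeps $\{\vk\}$ away from $\partial\dom H^*$: a Legendre $H^*$ need not blow up on $\partial\dom H^*$ (its domain may even be bounded), so the value bound on $H^*(\vk)$ alone does not prevent $\vk$ from approaching the boundary — the needed leverage comes from the growth imposed on the data-fidelity term by the $1$-coercivity of $J$ combined with lower semicontinuity of $H^*$. The remaining ingredients are the equivalence of the two models and standard continuity and strict-convexity properties of Legendre conjugates.
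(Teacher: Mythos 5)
Your proof is correct. The existence/uniqueness step is the same as the paper's (verify the hypotheses of Proposition~\ref{prop: mainresult_Moreau} at $(\xk,\tk)$ for $k\ge N$), and your boundedness and boundary-exclusion estimates use exactly the paper's mechanism: test $\vz$ to bound the scaled optimal value above by $H^*(\vz)+C$ via (A2), and use the $1$-coercivity of $J$ on the residual $\xk-\tk\vk$ to force blow-up of the scaled value when $\vk$ stays away from $\vz$. Where you genuinely diverge is the identification of the limit. The paper runs a single contradiction: if $\|\vk-\vz\|\ge\epsilon$ along a subsequence, the residual norm diverges, and comparing the resulting lower bound $(M-\|p\|)\epsilon+a+\langle p,\vz\rangle$ (with $M$ arbitrarily large and $p,a$ from any affine minorant of $H^*$) with the upper bound $H^*(\vz)+C$ gives the contradiction outright, with no need for boundedness of $\{\vk\}$, subsequences, strict convexity, or the non-additive formulation. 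You instead pass to a cluster point $\tilde v$, exclude $\tilde v\notin\interior\dom H^*$ by the coercivity estimate, and then identify $\tilde v=\vz$ by testing $\wk=\xk/\tk$ in the Bregman form~\eqref{eqt: mainresult_var1}, noting that $\tk D_{H^*}(\wk,\vk)$ must blow up unless $D_{H^*}(\vz,\tilde v)=0$, and concluding by strict convexity of the Legendre function $H^*$; this step leans on the model equivalence of Proposition~\ref{prop: mainresult_Moreau}, the finiteness (hence continuity) of $J^*$, and the continuity of $\nabla H^*$ on $\interior\dom H^*$, all of which are available, so the argument is sound. It is, however, a detour: the same coercivity estimate you use for boundary cluster points excludes \emph{every} cluster point $\tilde v\ne\vz$, interior or not, which is precisely the paper's one-shot argument, so the Bregman/strict-convexity identification, while correct and perhaps more structural, is not needed.
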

\begin{proof}
Let $\dk  := \xk - \tk\vz$. 
By assumption (A2), we have $\dk \in \dom J$ for all $k\in\N$, $\{\frac{\dk }{\tk}\}$ converges to zero as $k$ goes to infinity, and $\{\frac{J(\dk )}{\tk}\}$ is bounded from above.
Since the vector $\dk $ is in $\dom J$, and the vector $\vz$ is in $\interior \dom H^*$, we have $\xk\in \dom J + \tk\interior\dom H^*$ for all $k\in\N$. 
Moreover, since $\{\frac{\xk}{\tk}\}$ converges to $\vz\in \interior\dom H^*$, there exists $N\in\N$ such that $\xk$ is in the set $\tk\dom H^*$ for $k\geq N$.
Therefore, the assumptions of Proposition~\ref{prop: mainresult_Moreau} hold at $(\xk,\tk)$ for all $k\geq N$, and hence the minimizer $\vk$ exists and is unique for all $k\geq N$.

Now, we prove the convergence of $\{\vk\}$ to $\vz$ by contradiction. Assume that the sequence $\{\vk\}$ does not converge to $\vz$. Then, there exists a subsequence, still denoted by $\{\vk\}$, which satisfies $\|\vk-\vz\|\geq \epsilon$ for all $k\in\N$ for some positive constant $\epsilon$. Hence, we have
\begin{equation*}
\begin{split}
    \liminf_{k\to\infty} \|\xk-\tk\vk\| &= \liminf_{k\to\infty}\|\dk  + \tk(\vz - \vk)\|
    = \liminf_{k\to\infty}\tk\left\|\frac{\dk }{\tk} + \vz - \vk\right\|\\
    &\geq 
    \liminf_{k\to\infty}\tk\frac{\epsilon}{2} = +\infty,
\end{split}
\end{equation*}
where the inequality holds since $\frac{\dk }{\tk}$ converges to zero by assumption (A2) and $\|\vk-\vz\|\geq \epsilon$ holds.
Recall that $J$ is 1-coercive, and hence for all $M>0$, there exists $K\in\mathbb{N}$ such that there holds
\begin{equation*}
    J(\xk-\tk\vk) \geq M\|\xk-\tk\vk\|, \quad \forall\,k\geq K.
\end{equation*}
Since $H^*$ is a convex lower semi-continuous function, there exist $p\in\Rn$ and $a\in\R$ satisfying $H^*(x)\geq \langle p,x\rangle +a$ for all $x\in\Rn$.
Therefore, for all $k\geq K$, we have
\begin{equation*}
\begin{split}
    \frac{S(\xk,\tk)}{\tk} &= H^*(\vk) + \frac{1}{\tk}J(\xk-\tk\vk) \geq \langle p,\vk\rangle + a + \frac{M\|\xk-\tk\vk\|}{\tk} \\
    &= \langle p, \vk-\vz\rangle + a +\langle p,\vz\rangle + M\left\|\frac{\dk }{\tk}+ \vz-\vk\right\|\\
    &\geq - \|p\|\|\vk-\vz\| + a +\langle p,\vz\rangle + M\|\vz-\vk\| - M\frac{\|\dk \|}{\tk}\\
    &= (M-\|p\|) \|\vz-\vk\| +a+\langle p,\vz\rangle - M\frac{\|\dk \|}{\tk}\\
    &\geq (M-\|p\|) \epsilon +a+\langle p,\vz\rangle - M\frac{\|\dk \|}{\tk},
\end{split}
\end{equation*}
where the first equality holds by definition of $\vk$ (recall that $S(\xk,\tk)$ equals the optimal value in~\eqref{eqt: prop31_equal_minimizer}), and we assume $M > \|p\|$ in the last inequality.
As $k$ goes to infinity, we have
\begin{equation}\label{eqt:prop91pf_ineqt_geq}
    \liminf_{k\to\infty}\frac{S(\xk,\tk)}{\tk} \geq (M-\|p\|) \epsilon +a+\langle p,\vz\rangle - \limsup_{k\to\infty} M\frac{\|\dk \|}{\tk} = (M-\|p\|) \epsilon +a+\langle p,\vz\rangle,
\end{equation}
where the equality holds since $\{\frac{\dk }{\tk}\}$ converges to zero by assumption (A2).
Moreover, by straightforward computation, we have
\begin{equation*}
    \frac{S(\xk,\tk)}{\tk} = \min_{v\in\Rn} \left\{H^*(v) + \frac{1}{\tk}J(\xk-\tk v)\right\} \leq H^*(\vz) + \frac{1}{\tk}J(\dk ),
\end{equation*}
where the inequality holds by taking $v=\vz$ in the minimization problem. Therefore, we get
\begin{equation}\label{eqt:prop91pf_ineqt_leq}
    \limsup_{k\to\infty} \frac{S(\xk,\tk)}{\tk}\leq 
    \limsup_{k\to\infty}\left\{H^*(\vz) + \frac{1}{\tk}J(\dk )\right\} \leq H^*(\vz) + C,
\end{equation}
 where the last inequality holds since $\{\frac{J(\dk )}{\tk}\}$ is bounded from above by some constant $C$ due to assumption (A2).
Combining~\eqref{eqt:prop91pf_ineqt_geq} and~\eqref{eqt:prop91pf_ineqt_leq}, we obtain
\begin{equation*}
    (M-\|p\|) \epsilon +a+\langle p,\vz\rangle\leq \liminf_{k\to\infty}\frac{S(\xk,\tk)}{\tk} \leq \limsup_{k\to\infty}\frac{S(\xk,\tk)}{\tk} \leq H^*(\vz) + C.
\end{equation*}
Note that the right hand side is a fixed number in $\R$. However, the left hand side can be arbitrarily large, since $M$ is an arbitrary positive number satisfying $M>\|p\|$, and $\epsilon$ is positive. This leads to a contradiction. Therefore, we conclude that $\vk$ converges to $\vz$ as $k$ goes to infinity.
\qed
\end{proof}

\bigbreak

The minimizer of the variational models in~\eqref{eqt: prop31_equal_minimizer} is an 
estimator for some unknown quantity in the corresponding 
imaging
model. Thus, for denoising problems involving Poisson noise, we estimate $v=(v_1,\dots,v_n)$ in the rate parameter $tv$ in the Poisson distribution~\eqref{eqt:stat_poisson}. For multiplicative noise, this estimator is used to evaluate the unknown quantity $v=(v_1,\dots, v_n)$ in the rate parameter $(\frac{L}{v_1}, \dots, \frac{L}{v_n})$ in the Gamma distribution~\eqref{eqt:stat_multiplicative}. From this perspective, Proposition~\ref{prop:asymp_tinf} provides a
convergence result of  the estimator $\vk$ to the vector $\vz$
under assumptions (A1) and (A2). 
Now, we will explain why the limit $\vz$ in Proposition~\ref{prop:asymp_tinf} equals the parameter $v$ in the case of Poisson noise or multiplicative noise.

In the Poisson noise situation, $\tk$ is the exposure time of the sensor, and $\xk$ is the total number of photons the sensor received in this time period (see Remark~\ref{rem:meaningt_poisson}). If $\tk$ is an integer, then the $i$-th component of $\xk$, denoted by $\xk_i$, can be regarded as the sum of $\tk$ independent samples from the Poisson distribution with rate $v_i$. According to the law of large numbers, for all $i\in\{1,\dots,n\}$, as $\tk$ goes to infinity, $\frac{\xk_i}{\tk}$ converges to the expectation of the Poisson distribution with rate $v_i$, which equals $v_i$. In other words, the limit $\vz$ in assumption (A2) equals the parameter $v$ in the 
imaging model. Therefore, in this case, Proposition~\ref{prop:asymp_tinf} shows the convergence of the estimator $\vk$ to the desired quantity $\vz = v$ under assumptions (A1) and (A2).

In the case of multiplicative noise, $\tk$ is the number of images, and $\xk_i$ is the sum of i.i.d. samples from the exponential distribution with rate $\frac{1}{v_i}$ for each $i\in\{1,\dots, n\}$ (see Remark~\ref{rem:meaningt_multiplicative}). 
According to the law of large numbers, for each $i\in\{1,\dots, n\}$, as $\tk$ goes to infinity, $\frac{\xk_i}{\tk}$ converges to the expectation of the exponential distribution with rate $\frac{1}{v_i}$, which equals $v_i$.
Hence, the limit $\vz$ in assumption (A2) equals the parameter $v$ in the imaging model. 
Therefore, in this case, Proposition~\ref{prop:asymp_tinf} shows the convergence of the estimator $\vk$ to the desired quantity $\vz = v$ under assumptions (A1) and (A2).
\begin{remark}
From the proof of Proposition~\ref{prop:asymp_tinf}, we can obtain the value of the asymptotic function $S'_\infty(\vz,1)$ for all $\vz\in \interior\dom H^*$. Let $\xk-\tk\vz$ be a fixed vector $d\in\dom J$. Then,~\eqref{eqt:prop91pf_ineqt_leq} becomes
\begin{equation}\label{eqt:rem82_1}
    \limsup_{k\to\infty} \frac{S(\xk,\tk)}{\tk}\leq 
    \limsup_{k\to\infty}\left\{H^*(\vz) + \frac{1}{\tk}J(d)\right\} = H^*(\vz).
\end{equation}
Moreover, for all $p\in \dom H$, we have
\begin{equation}\label{eqt:rem82_2}
    \liminf_{k\to\infty} \frac{S(\xk,\tk)}{\tk}
    \geq \liminf_{k\to\infty} \frac{\langle \xk,p\rangle - J^*(p) - \tk H(p)}{\tk}
    = \langle \vz,p\rangle - H(p),
\end{equation}
where the inequality holds by~\eqref{eqt: lem_equality_optimal_value} and~\eqref{eqt: S_equiv_formula}, and the equality holds since $\{\frac{\xk}{\tk}\}$ converges to $\vz$ and $J^*(p)$ is finite for all $p\in\Rn$. Taking the supremum over all possible $p\in \dom H$ in~\eqref{eqt:rem82_2} and combining with~\eqref{eqt:rem82_1}, we obtain
\begin{equation*}
    H^*(\vz) = \sup_{p\in\dom H}\{\langle \vz,p\rangle - H(p)\}\leq \liminf_{k\to\infty} \frac{S(\xk,\tk)}{\tk}\leq \limsup_{k\to\infty} \frac{S(\xk,\tk)}{\tk}\leq H^*(\vz),
\end{equation*}
and hence all the inequalities become equalities. Therefore, we have
\begin{equation*}
    S'_\infty(\vz,1) = \lim_{t\to+\infty} \frac{S(d+t\vz,t)-S(d,0)}{t} = \lim_{t\to+\infty} \frac{S(d+t\vz,t)}{t} = H^*(\vz).
\end{equation*}

When $0\in\dom J$, we can obtain the value of the asymptotic function $F'_\infty(\vz,1)$ using~\eqref{eqt: prop31_minimal_value} and~\eqref{eqt: mainresult_defF}. Let $\xk=\tk\vz$ and $\vz\in\interior \dom H^*$ hold. By straightforward calculation, we get
\begin{equation*}
\begin{split}
    F'_\infty(\vz,1) &= \lim_{t\to+\infty} \frac{F(t\vz,t)-F(0,0)}{t} = \lim_{t\to+\infty} \frac{F(t\vz,t)}{t}
    = \lim_{t\to+\infty} \frac{(tH)^*(t\vz) - S(t\vz,t)}{t}\\
    &= H^*(\vz) - S'_\infty(\vz,1) = 0.
\end{split}
\end{equation*}
\end{remark}

\section{Numerical experiments}\label{sec:numerics}
We show some numerical results of the variational models for denoising problems with Poisson noise in section~\ref{sec:numerical_poisson} and multiplicative noise in section~\ref{sec:numerical_multiplicative}, respectively.
For each noise type, we solve  the variational models with the same data fidelity, but with different regularization terms: the one in our model~\eqref{eqt: mainresult_var1} and another one widely  used in the literature. The matlab codes are provided in \url{https://github.com/TingweiMeng/HJ_nonadditive_denoise}. To solve the non-convex problem~\eqref{eqt: mainresult_var1}, we apply the Alternating Direction Method of Multipliers (ADMM) method to the equivalent convex optimization problem~\eqref{eqt: mainresult_var2}, whose $k$-th step reads
\begin{equation}\label{eqt:numerical_admm}
\begin{split}
    \vkp &= \argmin_{v\in \Rn} \left\{ t H^*(v) + \frac{\lambda}{2} \left\| \wk+ tv -x+ \yk\right\|^2\right\},\\
     \wkp &= \argmin_{w\in \Rn} \left\{ J(w) + \frac{\lambda}{2} \|w + t\vkp - x + \yk\|^2\right\},\\
     \ykp &= \yk + \wkp + t\vkp - x.
\end{split}
\end{equation}

\subsection{Poisson noise} \label{sec:numerical_poisson}
We approach the Poisson denoising problem by solving 
\begin{equation} \label{eqt:num_poisson}
\min_{\imageq\in(0,+\infty)^n} \left\{\sum_{i=1}^n\left(t\imageq_i - x_i\log \imageq_i \right) + \alpha \tv(\log\imageq_1,\dots, \log\imageq_n)\right\},
\end{equation}
where $t$ is the positive parameter which denotes the exposure time of the sensor (see Remark~\ref{rem:meaningt_poisson}), $\frac{x}{t}$ is the input noisy image, 
$\alpha$ is a positive parameter in the model, and $\tv(\cdot)$ is the anisotropic total variation. 
This model is a particular instance  of~\eqref{eqt: mainresult_var1} (see section~\ref{sec: Poisson}),
where the function $J$ is the Legendre-Fenchel transform of $\alpha \tv$, which equals the indicator function of the Meyer ball with radius $\alpha$. 
Hence, assumption (A1) is satisfied.
By Proposition~\ref{prop: mainresult_Moreau},  model~\eqref{eqt:num_poisson} is equivalent to the following convex optimization problem
\begin{equation}\label{eqt:num_opt2_poisson}
    \min_{\imageq\in(0,+\infty)^n} \left\{J(x-t\imageq) + t\sum_{i=1}^n (\imageq_i\log \imageq_i - \imageq_i)\right\} = \min_{\imageq\in(0,+\infty)^n} \left\{\tv^*\left(\frac{x-t\imageq}{\alpha}\right) + t\sum_{i=1}^n (\imageq_i\log \imageq_i - \imageq_i)\right\}.
\end{equation}
We apply the ADMM method to solve~\eqref{eqt:num_opt2_poisson} numerically. The update scheme in the $k$-th iteration of ADMM is given as follows
\begin{equation}\label{eqt:admm2_poisson}
\begin{split}
    \vkp &= \argmin_{\imageq\in(0,+\infty)^n} \left\{ t\sum_{i=1}^n (\imageq_i\log \imageq_i - \imageq_i) + \frac{\lambda}{2} \left\| \wk+ tv -x+ \yk\right\|^2\right\},\\
     \wkp &= \argmin_{w\in \Rn} \left\{ J(w) + \frac{\lambda}{2} \|w + t\vkp - x + \yk\|^2\right\}\\
     &= -t\vkp + x - \yk - \argmin_{z\in \Rn} \left\{ \alpha \tv(z) + \frac{1}{2} \|z + t\vkp - x + \yk\|^2\right\},\\
     \ykp &= \yk + \wkp + t\vkp - x.
\end{split}
\end{equation}
The first line in~\eqref{eqt:admm2_poisson} is equivalent to finding $\vkp = (\vkp_1, \dots, \vkp_n)\in {(0,+\infty)^n}$, where each $\vkp_i$ satisfies
\begin{equation}
    t\log \vkp_i + \lambda t^2 \vkp_i = \lambda t(-\wk_i + x_i - \yk_i),
\end{equation}
that is numerically solved using Newton's method.
The second line in~\eqref{eqt:admm2_poisson} involves the proximal map of the anisotropic total variation, which is solved using the algorithm in~\cite{chambolle.09.ijcv,darbon2006imageI,hochbaum.01.jacm}.

We compare the numerical result of~\eqref{eqt:num_poisson} with the following widely used variational model for Poisson denoising problems~\cite{bertero_2009,poisson_primal_dual,le_chartrand_asaki}
\begin{equation}\label{eqt:num_literature_poisson}
    \min_{\imageq\in(0,+\infty)^n} \sum_{i=1}^n\left(t\imageq_i - x_i\log \imageq_i \right) + \alpha \tv(\imageq),
\end{equation}
where the meaning of $x,t,\alpha$ and $\tv$ are the same as in~\eqref{eqt:num_poisson}.
Recall that this model cannot be expressed in the form of~\eqref{eqt: mainresult_var1}, cf. Remark~\ref{rem:poisson_tv_prior}.
We apply ADMM to solve~\eqref{eqt:num_literature_poisson}, where the $k$-th step reads
\begin{equation}\label{eqt:admm_literature_poisson}
\begin{split}
    \vkp &= \argmin_{\imageq\in(0,+\infty)^n} \left\{ \sum_{i=1}^n \left(t\imageq_i - x_i\log \imageq_i \right) + \frac{\lambda}{2} \left\| v - \wk + \yk\right\|^2\right\},\\
     \wkp &= \argmin_{w\in \Rn} \left\{ \alpha \tv(w) + \frac{\lambda}{2} \|\vkp - w + \yk\|^2\right\},\\
     \ykp &= \yk + \vkp - \wkp.
\end{split}
\end{equation}
The first order optimality condition of the optimization problem in the first line of~\eqref{eqt:admm_literature_poisson} gives an analytical formula for $\vkp$, whose $i$-th component reads
\begin{equation*}
    \vkp_i = s_i + \sqrt{s_i^2 + \frac{x_i}{\lambda}}, \quad \text{ where } \quad s_i = \frac{1}{2}\left(\wk_i - \yk_i - \frac{t}{\lambda}\right).
\end{equation*}
The second line in~\eqref{eqt:admm_literature_poisson} is also solved using the algorithm in~\cite{chambolle.09.ijcv,darbon2006imageI,hochbaum.01.jacm}.

\begin{figure}[htbp]
    \centering
    \begin{subfigure}{0.32\textwidth}
        \centering \includegraphics[width=0.9\textwidth]{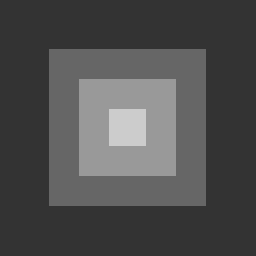}
        \caption{}
    \end{subfigure}
    \begin{subfigure}{0.32\textwidth}
        \centering \includegraphics[width=0.88\textwidth]{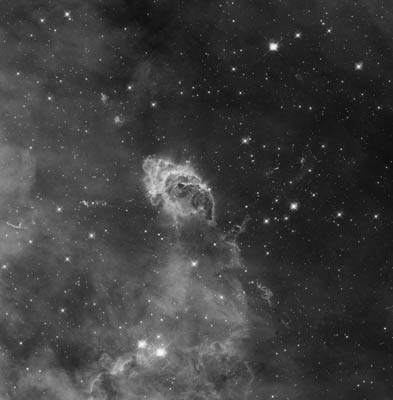}
        \caption{}
    \end{subfigure}
    \begin{subfigure}{0.32\textwidth}
        \centering \includegraphics[width=0.9\textwidth]{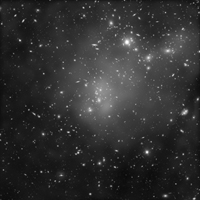}
        \caption{}
    \end{subfigure}
    \caption{The original images in the three testing examples. The middle image is provided by NASA, ESA, and the Hubble SM4 ERO Team in \url{https://hubblesite.org/image/2622/gallery/3-nebulas}.
    The right image is provided by NASA, ESA, J. Merten (Institute for Theoretical Astrophysics, Heidelberg/Astronomical Observatory of Bologna), and D. Coe (STScI) in \url{https://hubblesite.org/contents/media/images/2011/17/2856-Image.html}.}
    \label{fig:ori_images}
\end{figure}

\begin{figure}[htbp]
    \centering
    \begin{subfigure}{0.32\textwidth}
        \centering \includegraphics[width=0.9\textwidth]{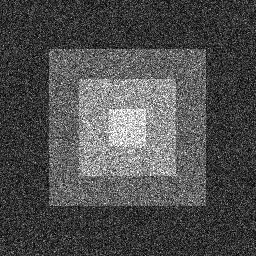}
        \caption{}
    \end{subfigure}
    \begin{subfigure}{0.32\textwidth}
        \centering \includegraphics[width=0.88\textwidth]{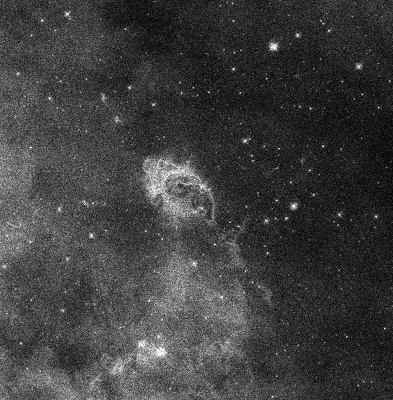}
        \caption{}
    \end{subfigure}
    \begin{subfigure}{0.32\textwidth}
        \centering \includegraphics[width=0.9\textwidth]{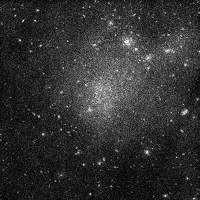}
        \caption{}
    \end{subfigure}
    \caption{The noisy images in the three testing examples corrupted by Poisson noise.}
    \label{fig:noisy_images_poisson}
\end{figure}

We compare the numerical results of the two models~\eqref{eqt:num_poisson} and~\eqref{eqt:num_literature_poisson} on the three test images shown in Figure~\ref{fig:ori_images}, each of them being corrupted by Poisson noise. The noisy image is generated by Poisson distribution as described in Remark~\ref{rem:meaningt_poisson}. The generated noisy images are shown in Figure~\ref{fig:noisy_images_poisson}. To have a fair comparison, we tune the parameters $\alpha$ in different models such that the residual images $\frac{x}{t}-\bar{v}$  have similar $\ell^2$-norms, where $\bar{v}$ is the minimizer in the variational problems.
We also plot the difference between the observed image and the reconstructed image  as proposed in~\cite{Buades2010Image} as the ``method noise".

\begin{figure}[htbp]
    \centering
    \begin{subfigure}{0.49\textwidth}
        \centering \includegraphics[width=0.7\textwidth]{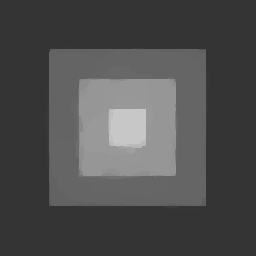}
        \caption{The restored image using~\eqref{eqt:num_poisson}}
    \end{subfigure}
    \begin{subfigure}{0.49\textwidth}
        \centering \includegraphics[width=0.7\textwidth]{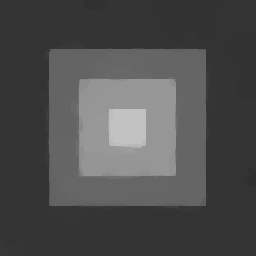}
        \caption{The restored image using~\eqref{eqt:num_literature_poisson}}
    \end{subfigure}\\
    \begin{subfigure}{0.49\textwidth}
        \centering \includegraphics[width=0.7\textwidth]{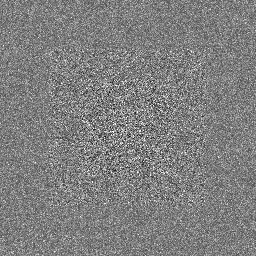}
        \caption{The residual image (+0.5) using~\eqref{eqt:num_poisson}}
    \end{subfigure}
    \begin{subfigure}{0.49\textwidth}
        \centering \includegraphics[width=0.7\textwidth]{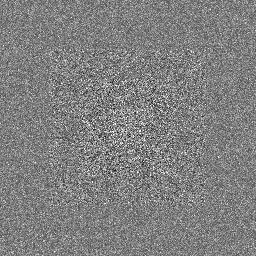}
        \caption{The residual image (+0.5) using~\eqref{eqt:num_literature_poisson}}
    \end{subfigure}
    \caption{The four figures show the restored and residual images of the model~\eqref{eqt:num_poisson} with $\alpha = 6.0$, $t=20$ and~\eqref{eqt:num_literature_poisson} with $\alpha= 12.5$, $t=20$. 
The parameters $\alpha$ are chosen such that the $\ell^2$-norms of the residual images of the two models are close to each other (both are around $31.3$).
\label{fig:poisson_eg1_images}}
\end{figure}

\begin{figure}[htbp]
    \centering
    \begin{subfigure}{0.49\textwidth}
        \centering \includegraphics[width=0.7\textwidth]{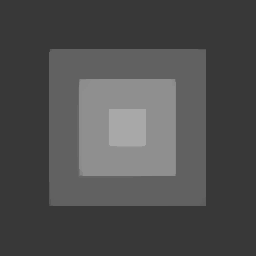}
        \caption{The restored image using~\eqref{eqt:num_poisson}}
    \end{subfigure}
    \begin{subfigure}{0.49\textwidth}
        \centering \includegraphics[width=0.7\textwidth]{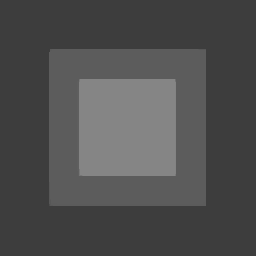}
        \caption{The restored image using~\eqref{eqt:num_literature_poisson}}
    \end{subfigure}\\
    \begin{subfigure}{0.49\textwidth}
        \centering \includegraphics[width=0.7\textwidth]{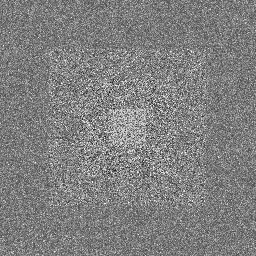}
        \caption{The residual image (+0.5) using~\eqref{eqt:num_poisson}}
    \end{subfigure}
    \begin{subfigure}{0.49\textwidth}
        \centering \includegraphics[width=0.7\textwidth]{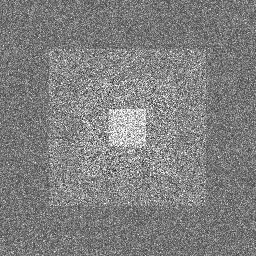}
        \caption{The residual image (+0.5) using~\eqref{eqt:num_literature_poisson}}
    \end{subfigure}
    \caption{The four figures show the restored and residual images of the model~\eqref{eqt:num_poisson} with $\alpha = 6.0$, $t=5$ and~\eqref{eqt:num_literature_poisson} with $\alpha= 12.5$, $t=5$.
\label{fig:poisson_eg1_images_t5}}
\end{figure}

\begin{figure}[htbp]
    \centering
    \begin{subfigure}{0.49\textwidth}
        \centering \includegraphics[width=0.7\textwidth]{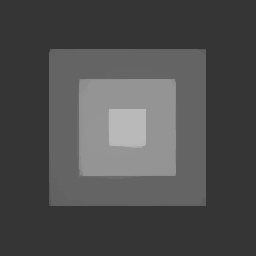}
        \caption{The restored image using~\eqref{eqt:num_poisson}}
    \end{subfigure}
    \begin{subfigure}{0.49\textwidth}
        \centering \includegraphics[width=0.7\textwidth]{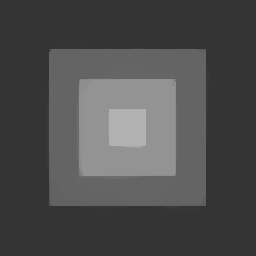}
        \caption{The restored image using~\eqref{eqt:num_literature_poisson}}
    \end{subfigure}\\
    \begin{subfigure}{0.49\textwidth}
        \centering \includegraphics[width=0.7\textwidth]{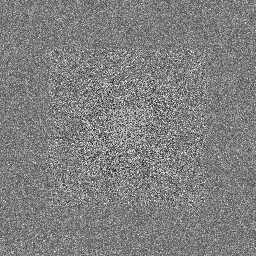}
        \caption{The residual image (+0.5) using~\eqref{eqt:num_poisson}}
    \end{subfigure}
    \begin{subfigure}{0.49\textwidth}
        \centering \includegraphics[width=0.7\textwidth]{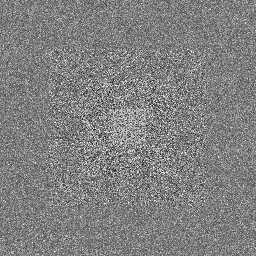}
        \caption{The residual image (+0.5) using~\eqref{eqt:num_literature_poisson}}
    \end{subfigure}
    \caption{The four figures show the restored and residual images of the model~\eqref{eqt:num_poisson} with $\alpha = 6.0$, $t=10$ and~\eqref{eqt:num_literature_poisson} with $\alpha= 12.5$, $t=10$.
\label{fig:poisson_eg1_images_t10}}
\end{figure}

\begin{figure}[htbp]
    \centering
    \begin{subfigure}{0.49\textwidth}
        \centering \includegraphics[width=0.7\textwidth]{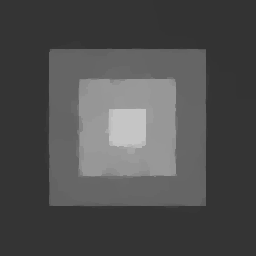}
        \caption{The restored image using~\eqref{eqt:num_poisson}}
    \end{subfigure}
    \begin{subfigure}{0.49\textwidth}
        \centering \includegraphics[width=0.7\textwidth]{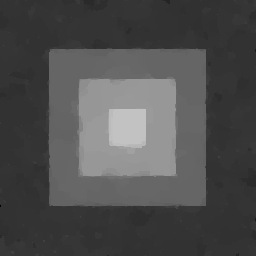}
        \caption{The restored image using~\eqref{eqt:num_literature_poisson}}
    \end{subfigure}\\
    \begin{subfigure}{0.49\textwidth}
        \centering \includegraphics[width=0.7\textwidth]{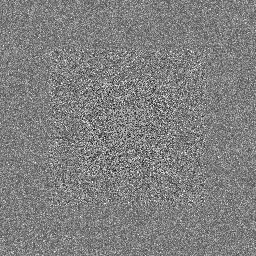}
        \caption{The residual image (+0.5) using~\eqref{eqt:num_poisson}}
    \end{subfigure}
    \begin{subfigure}{0.49\textwidth}
        \centering \includegraphics[width=0.7\textwidth]{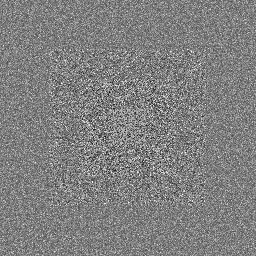}
        \caption{The residual image (+0.5) using~\eqref{eqt:num_literature_poisson}}
    \end{subfigure}
    \caption{The four figures show the restored and residual images of the model~\eqref{eqt:num_poisson} with $\alpha = 6.0$, $t=30$ and~\eqref{eqt:num_literature_poisson} with $\alpha= 12.5$, $t=30$.
\label{fig:poisson_eg1_images_t30}}
\end{figure}

First, we show the numerical results with the input image $\frac{x}{t}$ to be the left image in Figure~\ref{fig:noisy_images_poisson}, where
$t=20.0$. The parameter $\alpha$ in~\eqref{eqt:num_poisson} is set to be $6.0$, while the parameter $\alpha$ in~\eqref{eqt:num_literature_poisson} is  $12.5$. The restored images $\bar{v}$ and the residual images $\frac{x}{t}-\bar{v}$ of these two models are shown in Figure~\ref{fig:poisson_eg1_images}.
We observe similar output images from the two models in this example. Actually both models perform well in the sense of removing noise and preserving edges.
To show the influence of the parameter $t$, we fix the parameter $\alpha$ in both models and show the output images with $t=5$, $t=10$, and $t=30$ in Figures~\ref{fig:poisson_eg1_images_t5},~\ref{fig:poisson_eg1_images_t10}, and~\ref{fig:poisson_eg1_images_t30}, respectively. From these numerical results, we conclude that the parameter $t$ behaves like a regularization parameter (the smaller $t$ is, the stronger the regularization is).

\begin{figure}[htbp]
    \centering
    \begin{subfigure}{0.49\textwidth}
        \centering \includegraphics[width=0.7\textwidth]{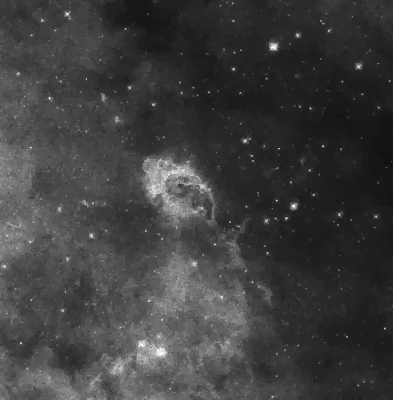}
        \caption{The restored image using~\eqref{eqt:num_poisson}}
    \end{subfigure}
    \begin{subfigure}{0.49\textwidth}
        \centering \includegraphics[width=0.7\textwidth]{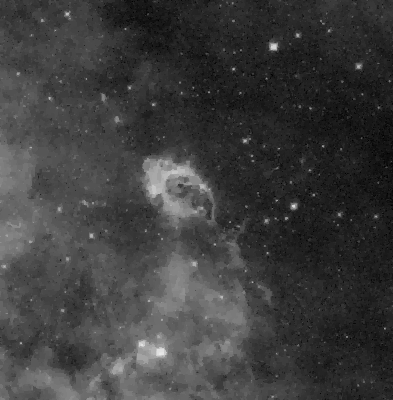}
        \caption{The restored image using~\eqref{eqt:num_literature_poisson}}
    \end{subfigure}\\
    \begin{subfigure}{0.49\textwidth}
        \centering \includegraphics[width=0.7\textwidth]{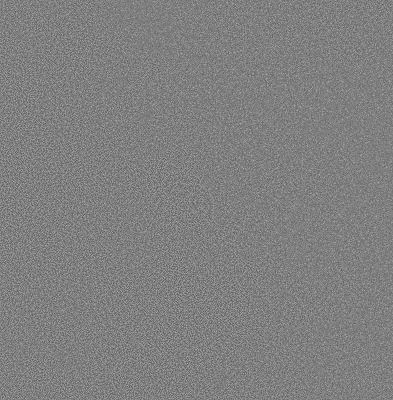}
        \caption{The residual image (+0.5) using~\eqref{eqt:num_poisson}}
    \end{subfigure}
    \begin{subfigure}{0.49\textwidth}
        \centering \includegraphics[width=0.7\textwidth]{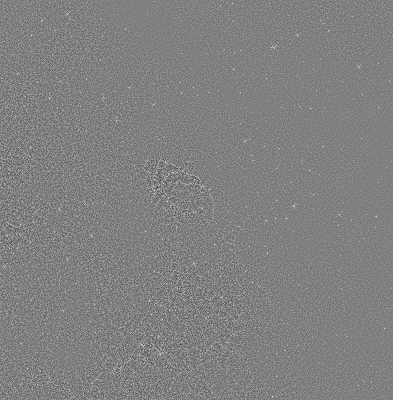}
        \caption{The residual image (+0.5) using~\eqref{eqt:num_literature_poisson}}
    \end{subfigure}
    \caption{The four figures show the restored and residual images of the model~\eqref{eqt:num_poisson} with $\alpha=2.0$, $t=60$ and the model~\eqref{eqt:num_literature_poisson} with $\alpha=7.21875$, $t=60$. 
    The parameters $\alpha$ are chosen such that the $\ell^2$-norms of the residual images of the two models are close to each other (both are around $21.2$).
\label{fig:poisson_eg2_images}}
\end{figure}

Then, we focus on the observed image in Figure~\ref{fig:noisy_images_poisson}(b).
The positive parameter $t$ is $60.0$,  $\alpha$ in~\eqref{eqt:num_poisson} is set to be $2.0$, and the parameter $\alpha$ in~\eqref{eqt:num_literature_poisson} is  $7.21875$. 
 Figure~\ref{fig:poisson_eg2_images}  shows less staircase effect in the restored image of our model~\eqref{eqt:num_poisson} than in the one of~\eqref{eqt:num_literature_poisson}.
Moreover, the residual image of~\eqref{eqt:num_poisson} contains less texture than the one of~\eqref{eqt:num_literature_poisson}.

\begin{figure}[htbp]
    \centering
    \begin{subfigure}{0.49\textwidth}
        \centering \includegraphics[width=0.7\textwidth]{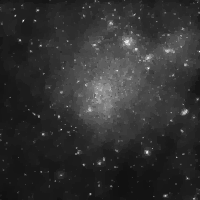}
        \caption{The restored image using~\eqref{eqt:num_poisson}}
    \end{subfigure}
    \begin{subfigure}{0.49\textwidth}
        \centering \includegraphics[width=0.7\textwidth]{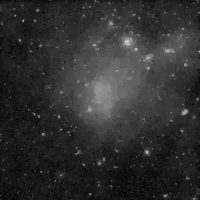}
        \caption{The restored image using~\eqref{eqt:num_literature_poisson}}
    \end{subfigure}\\
    \begin{subfigure}{0.49\textwidth}
        \centering \includegraphics[width=0.7\textwidth]{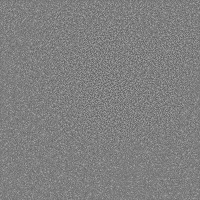}
        \caption{The residual image (+0.5) using~\eqref{eqt:num_poisson}}
    \end{subfigure}
    \begin{subfigure}{0.49\textwidth}
        \centering \includegraphics[width=0.7\textwidth]{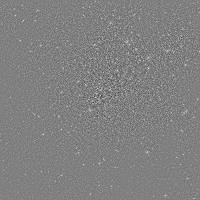}
        \caption{The residual image (+0.5) using~\eqref{eqt:num_literature_poisson}}
    \end{subfigure}
    \caption{The four figures show the restored and residual images of the model~\eqref{eqt:num_poisson} with $\alpha=2.0$ and the model~\eqref{eqt:num_literature_poisson} with $\alpha=8.21875$. 
    The parameters $\alpha$ are chosen such that the $\ell^2$-norms of the residual images of the two models are close to each other (both are around $9.9$).
\label{fig:poisson_eg3_images}}
\end{figure}

In the third example, the input image $\frac{x}{t}$ is the right image in Figure~\ref{fig:noisy_images_poisson}. We consider $t=60.0$,  $\alpha=2.0$ in~\eqref{eqt:num_poisson}, and $\alpha=8.21875$ in~\eqref{eqt:num_literature_poisson}.  Figure~\ref{fig:poisson_eg3_images} illustrates a similar behavior as in the second example. 
Compared with the output image of~\eqref{eqt:num_literature_poisson}, our model~\eqref{eqt:num_poisson} provides a restored image with less staircase effect, and it preserves more texture (see the corresponding residual image).

\subsection{Multiplicative noise}\label{sec:numerical_multiplicative}
In this section, we test our model~\eqref{eqt: mainresult_var1} on denoising problems with multiplicative noise. We consider the following optimization problem
\begin{equation} \label{eqt:numerical_multiplicative}
\min_{\imageq\in(0,+\infty)^n} \left\{t\sum_{i=1}^n\left(-1+\log \imageq_i +  \frac{x_i/t}{\imageq_i} -\log\frac{x_i}{t}\right)+ \alpha \tv\left(-\frac{1}{\imageq_1},\dots, -\frac{1}{\imageq_n}\right)\right\},
\end{equation}
where $t$ is the positive parameter which denotes the number of observed images (cf. Remark~\ref{rem:meaningt_multiplicative}), $\frac{x}{t}$ is the input noisy image, 
$\alpha$ is a positive parameter in the model, and $\tv$ is the anisotropic total variation. 
This model  is presented in section~\ref{sec: multiplicative}, where the function $J$ is the Legendre-Fenchel transform of $\alpha \tv$, that is the indicator function of the Meyer ball with radius $\alpha$. 
Hence, assumption (A1) is satisfied.
By Proposition~\ref{prop: mainresult_Moreau}, the model~\eqref{eqt:numerical_multiplicative} is equivalent to the following convex optimization problem
\begin{equation}\label{eqt:numerical_multiplicative_additive}
\min_{\imageq\in(0,+\infty)^n} \left\{J\left(x-t\imageq\right) - t\sum_{i=1}^n \log\imageq_i\right\} = \min_{\imageq\in(0,+\infty)^n} \left\{\tv^*\left(\frac{x-t\imageq}{\alpha}\right) - t\sum_{i=1}^n \log\imageq_i\right\}.
\end{equation}
We apply ADMM to solve~\eqref{eqt:numerical_multiplicative_additive}, where the $k$-th iteration reads
\begin{equation}\label{eqt:admm_multiplicative_additive}
\begin{split}
    \vkp &= \argmin_{\imageq\in(0,+\infty)^n} \left\{ - t\sum_{i=1}^n \log\imageq_i + \frac{\lambda}{2} \left\| \wk + tv -x+ \yk\right\|^2\right\},\\
     \wkp &= \argmin_{w\in \Rn} \left\{ J(w) + \frac{\lambda}{2}\left\|w + t\vkp -x+ \yk\right\|^2\right\} \\
     &= -t\vkp +x- \yk - \argmin_{z\in \Rn} \left\{ \alpha \tv(z) + \frac{1}{2}\left\|z + t\vkp -x+ \yk\right\|^2\right\},\\
     \ykp &= \yk + \wkp + t\vkp - x.
\end{split}
\end{equation}
Similar to the first line in~\eqref{eqt:admm_literature_poisson}, the first order optimality condition of the optimization problem in the first line of~\eqref{eqt:admm_multiplicative_additive} gives an analytical formula for $\vkp$, whose $i$-th component reads
\begin{equation*}
    \vkp_i = s_i + \sqrt{s_i^2 + \frac{1}{\lambda t}}, \quad \text{ where } \quad s_i = \frac{1}{2t}\left(-\wk_i - \yk_i +x_i\right).
\end{equation*}
The second line in~\eqref{eqt:admm_multiplicative_additive} involves the proximal map of the anisotropic total variation, which is approached  using the algorithm in~\cite{chambolle.09.ijcv,darbon2006imageI,hochbaum.01.jacm}.

We compare the output of~\eqref{eqt:numerical_multiplicative} with the one of the following widely used model in the literature~\cite{Aubert2008Variational} for  denoising problems with multiplicative noise
\begin{equation}\label{eqt:numerical_multiplicative_literature}
    \min_{\imageq\in(0,+\infty)^n} \left\{t\sum_{i=1}^n\left(-1+\log \imageq_i +  \frac{x_i/t}{\imageq_i} -\log\frac{x_i}{t}\right)+ \alpha \tv\left(\log\imageq_1,\dots, \log\imageq_n\right)\right\}.
\end{equation}
Recall that this is not covered by our model~\eqref{eqt: mainresult_var1}, cf.  Remark~\ref{rem:mul_tvlog_prior}. Note that \eqref{eqt:numerical_multiplicative_literature} is    equivalent to the following convex optimization problem
\begin{equation}\label{eqt:numerical_multiplicative_literature_equiv}
    \min_{w\in\R^n} \left\{\sum_{i=1}^n\left(tw_i +  x_i\exp(-w_i) \right)+ \alpha \tv\left(w\right)\right\},
\end{equation}
via the change of variable $w_i = \log \imageq_i$ for each $i\in\{1,\dots,n\}$.
We apply the ADMM method to approximate the minimizer $\bar{w}$ in~\eqref{eqt:numerical_multiplicative_literature_equiv}, and then obtain the minimizer $\bar{v}$ in~\eqref{eqt:numerical_multiplicative_literature} by $\bar{v} = \left(\exp(\bar{w}_1),\dots, \exp(\bar{w}_n)\right)$.
The $k$-th step in the ADMM method reads
\begin{equation}\label{eqt:admm_multiplicative_literature}
\begin{split}
    \ukp &= \argmin_{u\in \Rn} \left\{ \sum_{i=1}^n\left(tu_i +  x_i\exp(-u_i) \right) + \frac{\lambda}{2} \left\|u- \wk + \yk\right\|^2\right\},\\
     \wkp &= \argmin_{w\in \Rn} \left\{ \alpha \tv(w) + \frac{\lambda}{2}\left\|\ukp- w + \yk\right\|^2\right\}, \\
     \ykp &= \yk + \ukp - \wkp,
\end{split}
\end{equation}
where the first line is solved by Newton's method, and the second line is performed using the algorithm in~\cite{chambolle.09.ijcv,darbon2006imageI,hochbaum.01.jacm}.

\begin{figure}[htbp]
    \centering
    \begin{subfigure}{0.32\textwidth}
        \centering \includegraphics[width=0.9\textwidth]{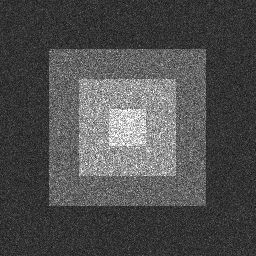}
        \caption{}
    \end{subfigure}
    \begin{subfigure}{0.32\textwidth}
        \centering \includegraphics[width=0.88\textwidth]{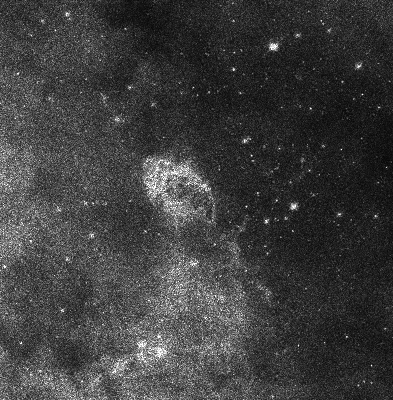}
        \caption{}
    \end{subfigure}
    \begin{subfigure}{0.32\textwidth}
        \centering \includegraphics[width=0.9\textwidth]{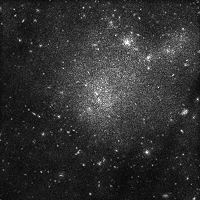}
        \caption{}
    \end{subfigure}
    \caption{The noisy images in the three testing examples corrupted by multiplicative noise.}
    \label{fig:noisy_images_multiplicative}
\end{figure}

We still use the three images in Figure~\ref{fig:ori_images} to compare the output of the two models~\eqref{eqt:numerical_multiplicative} and~\eqref{eqt:numerical_multiplicative_literature}. The corresponding noisy images shown in Figure~\ref{fig:noisy_images_multiplicative} are generated with different parameters $t>0$, as described in Remark~\ref{rem:meaningt_multiplicative}. In each example, we plot both the restored images $\bar{v}$ and the residual images $\frac{x}{t}-\bar{v}$ of the two models.
Similarly as in section~\ref{sec:numerical_poisson}, to compare the numerical results of the two models, we choose different parameters $\alpha$ in different models such that the residual images have similar $\ell^2$-norms.

\begin{figure}[htbp]
    \centering
    \begin{subfigure}{0.49\textwidth}
        \centering \includegraphics[width=0.7\textwidth]{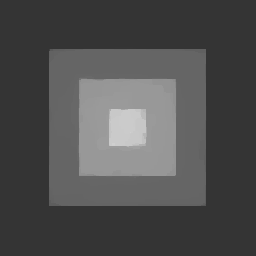}
        \caption{The restored image using~\eqref{eqt:numerical_multiplicative}}
    \end{subfigure}
    \begin{subfigure}{0.49\textwidth}
        \centering \includegraphics[width=0.7\textwidth]{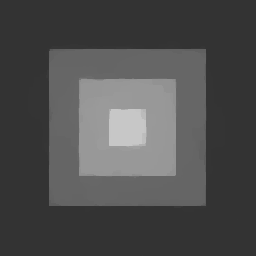}
        \caption{The restored image using~\eqref{eqt:numerical_multiplicative_literature}}
    \end{subfigure}\\
    \begin{subfigure}{0.49\textwidth}
        \centering \includegraphics[width=0.7\textwidth]{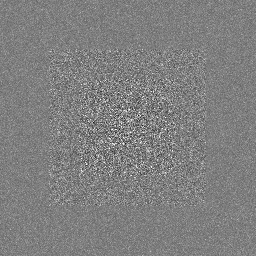}
        \caption{The residual image (+0.5) using~\eqref{eqt:numerical_multiplicative}}
    \end{subfigure}
    \begin{subfigure}{0.49\textwidth}
        \centering \includegraphics[width=0.7\textwidth]{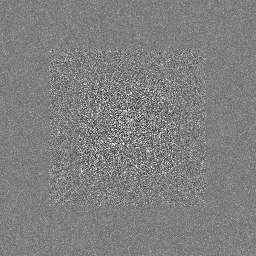}
        \caption{The residual image (+0.5) using~\eqref{eqt:numerical_multiplicative_literature}}
    \end{subfigure}\\
    \caption{The four figures show the restored and residual images of the model~\eqref{eqt:numerical_multiplicative} with $\alpha=4.0$, $t=20$ and the model~\eqref{eqt:numerical_multiplicative_literature} with $\alpha=7.0$, $t=20$. 
    The parameters $\alpha$ are chosen such that the $\ell^2$-norms of the residual images of the two models are close to each other (both are around $19.4$).
\label{fig:multiplicative_eg1}}
\end{figure}

\begin{figure}[htbp]
    \centering
    \begin{subfigure}{0.49\textwidth}
        \centering \includegraphics[width=0.7\textwidth]{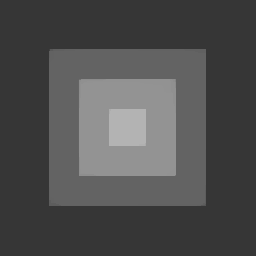}
        \caption{The restored image using~\eqref{eqt:numerical_multiplicative}}
    \end{subfigure}
    \begin{subfigure}{0.49\textwidth}
        \centering \includegraphics[width=0.7\textwidth]{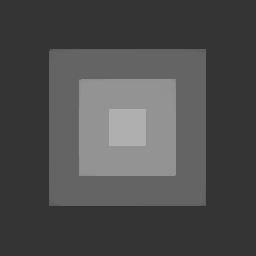}
        \caption{The restored image using~\eqref{eqt:numerical_multiplicative_literature}}
    \end{subfigure}\\
    \begin{subfigure}{0.49\textwidth}
        \centering \includegraphics[width=0.7\textwidth]{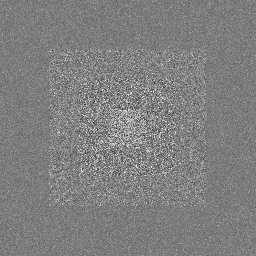}
        \caption{The residual image (+0.5) using~\eqref{eqt:numerical_multiplicative}}
    \end{subfigure}
    \begin{subfigure}{0.49\textwidth}
        \centering \includegraphics[width=0.7\textwidth]{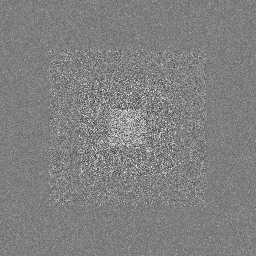}
        \caption{The residual image (+0.5) using~\eqref{eqt:numerical_multiplicative_literature}}
    \end{subfigure}
    \caption{The four figures show the restored and residual images of the model~\eqref{eqt:numerical_multiplicative} with $\alpha = 4.0$, $t=5$ and~\eqref{eqt:numerical_multiplicative_literature} with $\alpha= 7.0$, $t=5$.
\label{fig:multiplicative_eg1_images_t5}}
\end{figure}

\begin{figure}[htbp]
    \centering
    \begin{subfigure}{0.49\textwidth}
        \centering \includegraphics[width=0.7\textwidth]{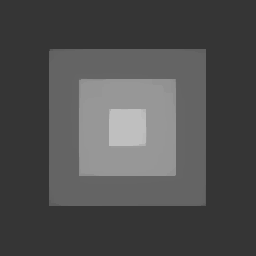}
        \caption{The restored image using~\eqref{eqt:numerical_multiplicative}}
    \end{subfigure}
    \begin{subfigure}{0.49\textwidth}
        \centering \includegraphics[width=0.7\textwidth]{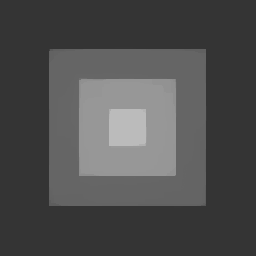}
        \caption{The restored image using~\eqref{eqt:numerical_multiplicative_literature}}
    \end{subfigure}\\
    \begin{subfigure}{0.49\textwidth}
        \centering \includegraphics[width=0.7\textwidth]{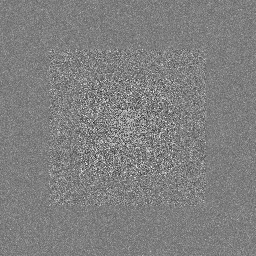}
        \caption{The residual image (+0.5) using~\eqref{eqt:numerical_multiplicative}}
    \end{subfigure}
    \begin{subfigure}{0.49\textwidth}
        \centering \includegraphics[width=0.7\textwidth]{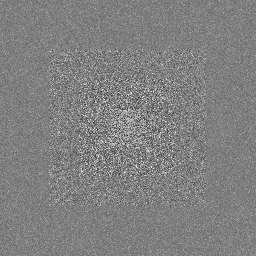}
        \caption{The residual image (+0.5) using~\eqref{eqt:numerical_multiplicative_literature}}
    \end{subfigure}
    \caption{The four figures show the restored and residual images of the model~\eqref{eqt:numerical_multiplicative} with $\alpha = 4.0$, $t=10$ and~\eqref{eqt:numerical_multiplicative_literature} with $\alpha= 7.0$, $t=10$.
\label{fig:multiplicative_eg1_images_t10}}
\end{figure}

\begin{figure}[htbp]
    \centering
    \begin{subfigure}{0.49\textwidth}
        \centering \includegraphics[width=0.7\textwidth]{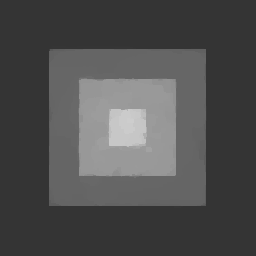}
        \caption{The restored image using~\eqref{eqt:numerical_multiplicative}}
    \end{subfigure}
    \begin{subfigure}{0.49\textwidth}
        \centering \includegraphics[width=0.7\textwidth]{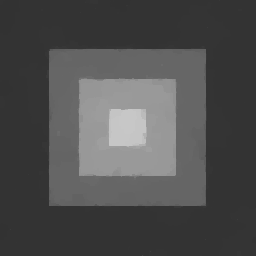}
        \caption{The restored image using~\eqref{eqt:numerical_multiplicative_literature}}
    \end{subfigure}\\
    \begin{subfigure}{0.49\textwidth}
        \centering \includegraphics[width=0.7\textwidth]{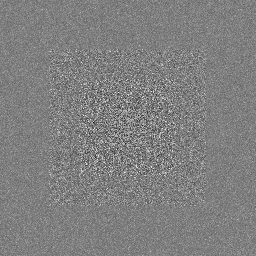}
        \caption{The residual image (+0.5) using~\eqref{eqt:numerical_multiplicative}}
    \end{subfigure}
    \begin{subfigure}{0.49\textwidth}
        \centering \includegraphics[width=0.7\textwidth]{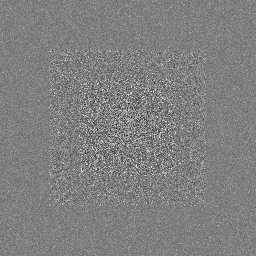}
        \caption{The residual image (+0.5) using~\eqref{eqt:numerical_multiplicative_literature}}
    \end{subfigure}
    \caption{The four figures show the restored and residual images of the model~\eqref{eqt:numerical_multiplicative} with $\alpha = 4.0$, $t=30$ and~\eqref{eqt:numerical_multiplicative_literature} with $\alpha= 7.0$, $t=30$.
\label{fig:multiplicative_eg1_images_t30}}
\end{figure}

We first apply the two variational models~\eqref{eqt:numerical_multiplicative} and~\eqref{eqt:numerical_multiplicative_literature} to the left image in Figure~\ref{fig:noisy_images_multiplicative}, which is generated with  $t = 20$. 
The parameter $\alpha$ in~\eqref{eqt:numerical_multiplicative} is taken as $4.0$, while  $\alpha$ in~\eqref{eqt:numerical_multiplicative_literature} is set to be $7.0$.
The corresponding restored images and residual images of the two models are shown in Figure~\ref{fig:multiplicative_eg1}. From the numerical results, we observe similar outputs of both variational models. These two models  perform well, in the sense of removing noise and preserving edges.
To show the influence of the parameter $t$, we fix the parameter $\alpha$ in both models and show the output images with $t=5$, $t=10$, and $t=30$ in Figures~\ref{fig:multiplicative_eg1_images_t5},~\ref{fig:multiplicative_eg1_images_t10}, and~\ref{fig:multiplicative_eg1_images_t30}, respectively. Similarly as in the Poisson case, we deduce that the parameter $t$ behaves like a regularization parameter (the smaller $t$ is, the stronger the regularization is).

\begin{figure}[htbp]
    \centering
    \begin{subfigure}{0.49\textwidth}
        \centering \includegraphics[width=0.7\textwidth]{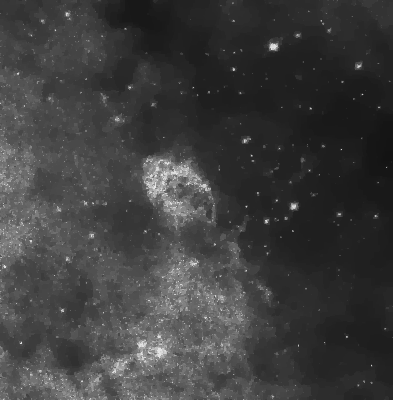}
        \caption{The restored image using~\eqref{eqt:numerical_multiplicative}}
    \end{subfigure}
    \begin{subfigure}{0.49\textwidth}
        \centering \includegraphics[width=0.7\textwidth]{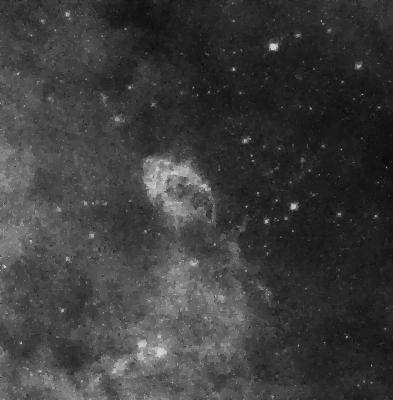}
        \caption{The restored image using~\eqref{eqt:numerical_multiplicative_literature}}
    \end{subfigure}\\
    \begin{subfigure}{0.49\textwidth}
        \centering \includegraphics[width=0.7\textwidth]{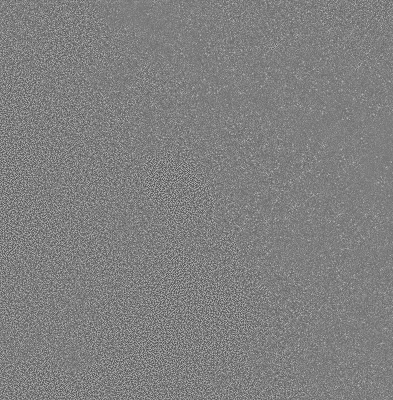}
        \caption{The residual image (+0.5) using~\eqref{eqt:numerical_multiplicative}}
    \end{subfigure}
    \begin{subfigure}{0.49\textwidth}
        \centering \includegraphics[width=0.7\textwidth]{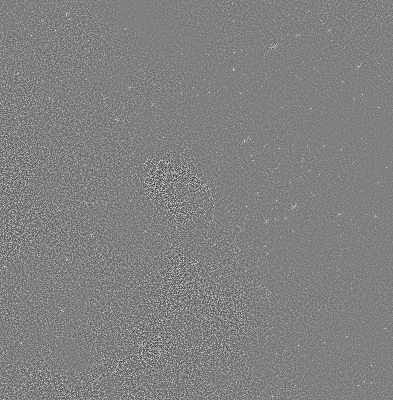}
        \caption{The residual image (+0.5) using~\eqref{eqt:numerical_multiplicative_literature}}
    \end{subfigure}\\
    \caption{The four figures show the restored and residual images of the model~\eqref{eqt:numerical_multiplicative} with $\alpha=0.5$, $t=10$ and the model~\eqref{eqt:numerical_multiplicative_literature} with $\alpha\approx 1.40$, $t=10$. 
    The parameters $\alpha$ are chosen such that the $\ell^2$-norms of the residual images of the two models are close to each other (both are around $27.5$).
\label{fig:multiplicative_eg2_alp_0p5}}
\end{figure}

\begin{figure}[htbp]
    \centering
    \begin{subfigure}{0.49\textwidth}
        \centering \includegraphics[width=0.7\textwidth]{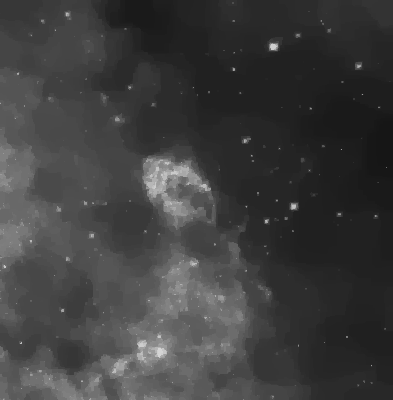}
        \caption{The restored image using~\eqref{eqt:numerical_multiplicative}}
    \end{subfigure}
    \begin{subfigure}{0.49\textwidth}
        \centering \includegraphics[width=0.7\textwidth]{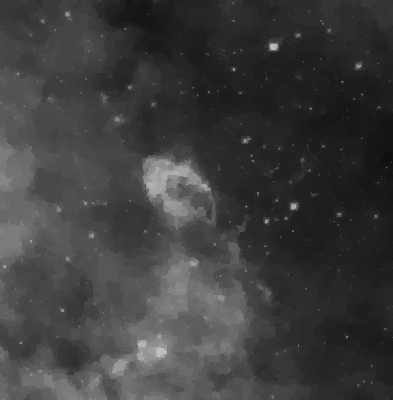}
        \caption{The restored image using~\eqref{eqt:numerical_multiplicative_literature}}
    \end{subfigure}\\
    \begin{subfigure}{0.49\textwidth}
        \centering \includegraphics[width=0.7\textwidth]{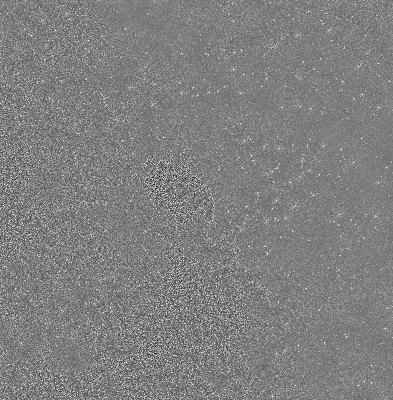}
        \caption{The residual image (+0.5) using~\eqref{eqt:numerical_multiplicative}}
    \end{subfigure}
    \begin{subfigure}{0.49\textwidth}
        \centering \includegraphics[width=0.7\textwidth]{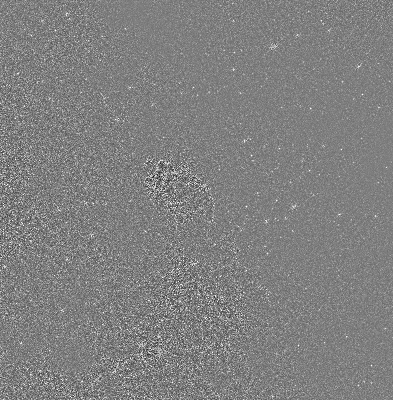}
        \caption{The residual image (+0.5) using~\eqref{eqt:numerical_multiplicative_literature}}
    \end{subfigure}\\
    \caption{The four figures show the restored and residual images of the model~\eqref{eqt:numerical_multiplicative} with $\alpha=1.0$, $t=10$ and the model~\eqref{eqt:numerical_multiplicative_literature} with $\alpha\approx 2.61$, $t=10$. 
    The parameters $\alpha$ are chosen such that the $\ell^2$-norms of the residual images of the two models are close to each other (both are around $33.1$).
\label{fig:multiplicative_eg2_alp1}}
\end{figure}

In the second example, the input is the middle image of Figure~\ref{fig:noisy_images_multiplicative}. It is generated with the parameter $t = 10$. In Figure~\ref{fig:multiplicative_eg2_alp_0p5}, we show the restored images and residual images of the model~\eqref{eqt:numerical_multiplicative} with $\alpha = 0.5$, and of the model~\eqref{eqt:numerical_multiplicative_literature} with $\alpha$ around $1.40$. The outputs of~\eqref{eqt:numerical_multiplicative} with $\alpha = 1.0$ and~\eqref{eqt:numerical_multiplicative_literature} with $\alpha$ around $2.61$ are illustrated in Figure~\ref{fig:multiplicative_eg2_alp1}. When the parameter $\alpha$ increases,  more regularized restored images are obtained by both models. 
We observe less staircase in the restored image of our model~\eqref{eqt:numerical_multiplicative} than in the one of~\eqref{eqt:numerical_multiplicative_literature}, as well as less texture in the residual image of our model~\eqref{eqt:numerical_multiplicative} as compared to the one of~\eqref{eqt:numerical_multiplicative_literature}.

\begin{figure}[htbp]
    \centering
    \begin{subfigure}{0.49\textwidth}
        \centering \includegraphics[width=0.7\textwidth]{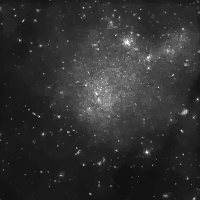}
        \caption{The restored image using~\eqref{eqt:numerical_multiplicative}}
    \end{subfigure}
    \begin{subfigure}{0.49\textwidth}
        \centering \includegraphics[width=0.7\textwidth]{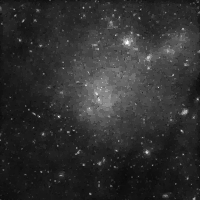}
        \caption{The restored image using~\eqref{eqt:numerical_multiplicative_literature}}
    \end{subfigure}\\
    \begin{subfigure}{0.49\textwidth}
        \centering \includegraphics[width=0.7\textwidth]{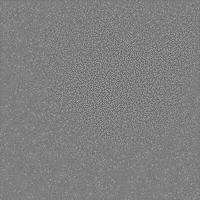}
        \caption{The residual image (+0.5) using~\eqref{eqt:numerical_multiplicative}}
    \end{subfigure}
    \begin{subfigure}{0.49\textwidth}
        \centering \includegraphics[width=0.7\textwidth]{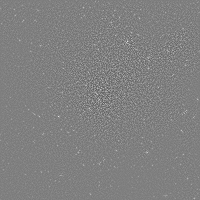}
        \caption{The residual image (+0.5) using~\eqref{eqt:numerical_multiplicative_literature}}
    \end{subfigure}\\
    \caption{The four figures show the restored and residual images of the model~\eqref{eqt:numerical_multiplicative} with $\alpha=0.5$, $t=20$ and the model~\eqref{eqt:numerical_multiplicative_literature} with $\alpha\approx 1.68$, $t=20$. 
    The parameters $\alpha$ are chosen such that the $\ell^2$-norms of the residual images of the two models are close to each other (both are around $7.4$).
\label{fig:multiplicative_eg3}}
\end{figure}

In the third example, the input noisy image $\frac{x}{t}$ is the right image in Figure~\ref{fig:noisy_images_multiplicative}, which is generated with the parameter $t = 20$. 
We set the parameter $\alpha$ in the model~\eqref{eqt:numerical_multiplicative} to be $0.5$, and the parameter $\alpha$ in the model~\eqref{eqt:numerical_multiplicative_literature} to be around $1.68$. The restored images and residual images of the two models are shown in Figure~\ref{fig:multiplicative_eg3}.
The restored image of~\eqref{eqt:numerical_multiplicative_literature} contains less noise, while the residual image of~\eqref{eqt:numerical_multiplicative} contains less texture (similarly to the second example). 
Compared to the widely used model~\eqref{eqt:numerical_multiplicative_literature}, our model~\eqref{eqt:numerical_multiplicative} provides restored images which have less staircase effect and meanwhile preserve more edges or texture in the original images.

\section{Summary}
In this paper, we propose the variational model~\eqref{eqt: mainresult_var1} for denoising problems with non-additive noise and provide its connection with the additive denoising model~\eqref{eqt: mainresult_var2} and certain HJ PDEs. In this way, we point out new possibilities to apply techniques and algorithms from one area to solve problems in the other area. As a by-product, our results show that the solution of certain initial-valued HJ problems in many space dimensions can be computed at some points using optimization solvers already developed for imaging purposes.
We also hope that a future better understanding of these HJ PDEs will help creating more efficient numerical optimization algorithms for the variational models considered in this paper. Moreover, promising numerical results are pointed out for the non-convex model~\eqref{eqt: mainresult_var1} arising in denoising problems involving Poisson noise or multiplicative noise, based on  robust algorithms for solving the associated convex optimization problem~\eqref{eqt: mainresult_var2}.
Note that the ADMM algorithm~\eqref{eqt:numerical_admm} in section~\ref{sec:numerics} works for any model~\eqref{eqt: mainresult_var1} satisfying assumption (A1).

Note that in this paper we consider non-additive noise models whose data fidelity are in the form of Bregman distance $tD_{H^*}\left(\frac{x}{t}, \imageq\right)$ for some function $H$. This framework covers many widely-used denoising problems, including the model where the observed image is assumed to be sampled from certain exponential family $P(\frac{x}{t}|t,v)$ (e.g., Poisson noise and multiplicative noise). 
A future direction is to study the connections between HJ PDEs and the linear inverse problems where the observed data is sampled from $P(\frac{x}{t}|t,Av)$ for some non-invertible linear operator $A$ (e.g., image deblurring problems).
Also, this paper only considers the noise which comes from a single distribution. Another challenge is to investigate the connections of HJ PDEs and the variational problems with mixed noise, such as the mixture of Gaussian and Poisson noise.

\begin{acknowledgements}
The authors gratefully acknowledge the support of IPAM (UCLA), where this collaboration started during the ``High Dimensional Hamilton-Jacobi PDEs" program (2020).
This work was funded by NSF DMS 1820821.
The authors thank the reviewers for providing fruitful comments that improved the presentation of the paper.
\end{acknowledgements}

\bibliographystyle{spmpsci}      % mathematics and physical sciences
\bibliography{references}

\newpage
\appendix
\section{Mathematical background} \label{appendix:bkgd}
In this section, several basic definitions and theorems in convex analysis are reviewed. All the results and notations can be found in \cite{ekeland1976,convex_analy_book1,convex_analy_book2,hiriart2004}. 
We use the angle bracket $\langle \cdot, \cdot \rangle$ to denote the inner product operator in any Euclidean space $\R^n$.

We denote $\gmRn$ to be the set of proper, convex and lower semi-continuous functions from $\Rn$ to $\R\cup\{+\infty\}$. Recall that a function $f\colon\R^n\to\R\cup\{\pm\infty\}$ is called proper if there exists a point $x$ such that $f(x)\in\R$.
In this paper, we assume every function is proper if not mentioned specifically.
The functions in $\gmRn$ have good continuity properties, which are stated below.

\begin{proposition} \cite[Lem.IV.3.1.1 and Chap.I.3.1 - 3.2]{convex_analy_book1}
\label{prop:bkgd_convex_cont} 
Let $f\in \gmRn$. If $x\in \ri \dom f$, then $f$ is continuous at $x$ in $\dom f$. If $x\in \dom f \setminus \ri \dom f$, then for all $y\in \ri \dom f$,
\begin{equation*}
f(x) = \lim_{t\to 0^+} f(x + t(y-x)).
\end{equation*}
\end{proposition}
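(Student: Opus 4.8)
The statement to prove is Proposition~\ref{prop:bkgd_convex_cont}: a function $f\in\gmRn$ is continuous on $\ri\dom f$ relative to $\dom f$, and at boundary points $x\in\dom f\setminus\ri\dom f$ it satisfies $f(x)=\lim_{t\to 0^+}f(x+t(y-x))$ for every $y\in\ri\dom f$. Since the excerpt ends precisely at the statement of this background proposition and it is attributed to \cite{convex_analy_book1}, the natural plan is to give the standard convex-analysis argument rather than anything novel. My plan is to reduce to the case where $\dom f$ is full-dimensional (by passing to the affine hull of $\dom f$, which is itself a Euclidean space, so $\ri$ becomes ordinary interior), and then exploit local boundedness of convex functions.

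For the first assertion, the key step is: a convex function that is bounded above on a neighborhood of a point is locally Lipschitz there. First I would fix $x\in\interior\dom f$ (working in the affine hull) and choose a small simplex $\Delta$ with vertices $v_0,\dots,v_m\in\dom f$ whose interior contains $x$; by convexity, $f\le \max_i f(v_i)<+\infty$ on $\Delta$, giving an upper bound on a neighborhood of $x$. A lower bound on a (possibly smaller) neighborhood then follows by a reflection argument: for $z$ near $x$, write $x$ as the midpoint of $z$ and its reflection $2x-z$, and use $f(x)\le\tfrac12 f(z)+\tfrac12 f(2x-z)$ together with the upper bound on $f(2x-z)$. Local boundedness on both sides then yields local Lipschitz continuity via the standard three-point (monotonicity of slopes) inequality for convex functions, and Lipschitz continuity gives continuity. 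This handles $x\in\ri\dom f$.

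For the boundary assertion, fix $x\in\dom f\setminus\ri\dom f$ and $y\in\ri\dom f$, and set $\varphi(t):=f(x+t(y-x))$ for $t\in[0,1]$. The point $x+t(y-x)$ lies in $\ri\dom f$ for all $t\in(0,1]$ (a standard fact: the segment from a relative-interior point to any point of the domain, excluding possibly the far endpoint, stays in the relative interior), so $\varphi$ is finite and convex on $(0,1]$ and, by the first part, continuous there. Convexity of $\varphi$ on $[0,1]$ forces $\varphi$ to be lower semicontinuous at $0$ from the right only in the trivial direction; the nontrivial inequality $\limsup_{t\to0^+}\varphi(t)\le\varphi(0)$ comes from convexity: $\varphi(t)\le(1-t)\varphi(0)+t\varphi(1)$, so $\limsup_{t\to0^+}\varphi(t)\le\varphi(0)=f(x)$. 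The matching inequality $\liminf_{t\to0^+}\varphi(t)\ge f(x)$ is exactly lower semicontinuity of $f$ at $x$ (since $x+t(y-x)\to x$), which holds because $f\in\gmRn$. Combining the two gives $\lim_{t\to0^+}\varphi(t)=f(x)$, as claimed.

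The only place requiring genuine care — the main obstacle — is the local boundedness/local Lipschitz step in finite dimensions, specifically making the simplex construction and the reflection argument rigorous while correctly working inside the affine hull of $\dom f$ rather than in all of $\R^n$; everything else is bookkeeping with convexity inequalities and the definition of lower semicontinuity. Since this is a cited background result, I would in practice state it, sketch this argument in a line or two, and refer to \cite{convex_analy_book1} for the full details.
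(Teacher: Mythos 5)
Your argument is correct: the paper itself gives no proof of this proposition (it is stated as background and attributed to the cited convex-analysis reference), and what you propose is exactly the standard textbook argument — local boundedness via a simplex plus reflection giving local Lipschitz continuity on $\ri\dom f$ (after passing to the affine hull), and for boundary points the combination of the convexity inequality $\varphi(t)\le(1-t)\varphi(0)+t\varphi(1)$ for the $\limsup$ with lower semicontinuity of $f$ for the $\liminf$, the latter being precisely where the assumption $f\in\gmRn$ is needed. Nothing is missing; citing \cite{convex_analy_book1} as you suggest is the appropriate level of detail here.
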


A vector $p$ is called a subgradient of $f$ at $x$ if it satisfies
\begin{equation*}
f(y)\geq f(x) + \langle p, y-x\rangle, \quad \forall y\in\Rn.
\end{equation*}
The collection of all such subgradients is called the subdifferential of $f$ at $x$, denoted as $\partial f(x)$. It is straightforward to check that $0\in \partial f(x)$ if and only if $x$ is a minimizer of $f$. As a result, one can check whether $x$ is a minimizer by computing the subdifferential.

Moreover, in most cases, the subdifferential operator commutes with summation. One set of assumption is given in the following proposition.
\begin{proposition}
\cite[Proposition~5.6, Ch.~I]{ekeland1976}
\label{prop: bkgd_linear_subdiff}
Let $f,g\in \gmRn$. Assume there exists a point $u\in \dom f\cap \dom g$ where $f$ is continuous.
Then we have $\partial (f+g)(x) = \partial f(x) + \partial g(x)$ for all $x\in \dom f \cap \dom g$.
\end{proposition}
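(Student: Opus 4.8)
I would prove the two inclusions separately; only the reverse one uses the continuity hypothesis. The inclusion $\partial f(x)+\partial g(x)\subseteq\partial(f+g)(x)$ is immediate: if $p\in\partial f(x)$ and $q\in\partial g(x)$, adding $f(y)\geq f(x)+\langle p,y-x\rangle$ and $g(y)\geq g(x)+\langle q,y-x\rangle$ and using $(f+g)(x)=f(x)+g(x)$ (valid since $x\in\dom f\cap\dom g$) gives $(f+g)(y)\geq(f+g)(x)+\langle p+q,y-x\rangle$ for all $y\in\Rn$, i.e.\ $p+q\in\partial(f+g)(x)$. No hypothesis is used here.

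For the reverse inclusion I would first reduce to a minimization statement. Given $p\in\partial(f+g)(x)$, replace $f$ by $f-\langle p,\cdot\rangle\in\gmRn$, which is still continuous at $u$ and satisfies $\partial(f-\langle p,\cdot\rangle)(x)=\partial f(x)-p$; since then $0\in\partial\bigl((f-\langle p,\cdot\rangle)+g\bigr)(x)$, it suffices to prove: \emph{if $x$ is a global minimizer of $f+g$ with $f,g\in\gmRn$ and $f$ continuous at some $u\in\dom f\cap\dom g$, then there is $q\in\Rn$ with $q\in\partial f(x)$ and $-q\in\partial g(x)$}; adding these two relations gives $0\in\partial f(x)+\partial g(x)$, and undoing the reduction gives $p\in\partial f(x)+\partial g(x)$. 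To produce $q$, put $a=f(x)$, $b=g(x)$ and separate the convex sets
\[
A=\{(y,\alpha)\in\Rn\times\R:\alpha\geq f(y)-a\},\qquad B=\{(y,\beta)\in\Rn\times\R:\beta\leq b-g(y)\}
\]
in $\Rn\times\R$; both contain $(x,0)$. Minimality of $x$ means $f(y)+g(y)\geq a+b$ for every $y$, so no point of $\interior A$ (where $\alpha>f(y)-a$) can lie in $B$, i.e.\ $\interior A\cap B=\emptyset$. Continuity of $f$ at $u$ gives an open ball $N\ni u$ and $M\in\R$ with $f\leq M$ on $N$ (so $N\subseteq\dom f$), whence $N\times(M-a,+\infty)\subseteq A$ and $\interior A\neq\emptyset$. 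The Hahn--Banach separation theorem then produces $(q^*,s)\in(\Rn\times\R)\setminus\{0\}$ and $c\in\R$ with $\langle q^*,y\rangle+s\alpha\geq c\geq\langle q^*,z\rangle+s\beta$ for all $(y,\alpha)\in A$ and $(z,\beta)\in B$; letting $\alpha\to+\infty$ forces $s\geq0$.

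The \textbf{main obstacle} is excluding $s=0$, and this is exactly where the continuity hypothesis is indispensable. If $s=0$, then $\langle q^*,y\rangle\geq c$ for all $y\in\dom f\supseteq N$, while $\langle q^*,u\rangle\leq c$ because $(u,b-g(u))\in B$ (as $u\in\dom g$); since $N$ is an open ball around $u$, comparing $\inf_{y\in N}\langle q^*,y\rangle\geq c$ with $\langle q^*,u\rangle\leq c$ forces $q^*=0$, contradicting $(q^*,s)\neq0$. Hence $s>0$, and after rescaling $s=1$. Evaluating the separation at $(x,0)\in A$ and $(x,0)\in B$ gives $c=\langle q^*,x\rangle$, so the two halves of the separation become $f(y)-f(x)\geq\langle -q^*,y-x\rangle$ and $g(z)-g(x)\geq\langle q^*,z-x\rangle$ for all $y,z\in\Rn$, i.e.\ $-q^*\in\partial f(x)$ and $q^*\in\partial g(x)$; taking $q=-q^*$ finishes the proof. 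Alternatively, the reverse inclusion follows from Fenchel--Rockafellar duality for $\inf_y\{f(y)+g(y)\}$ in the form cited from~\cite{ekeland1976}, whose constraint qualification is precisely the existence of a common domain point at which $f$ is continuous, with the dual optimum attained at the desired $q$; I would keep the separation argument as the primary one since it is self-contained.
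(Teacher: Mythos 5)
Your proposal is correct. Note that the paper itself offers no proof of this statement: it is quoted as a background result from Ekeland--Temam (Proposition~5.6, Ch.~I), so there is nothing in the paper to compare against line by line; what you have written supplies a complete, self-contained argument. Your easy inclusion $\partial f(x)+\partial g(x)\subseteq\partial(f+g)(x)$ is fine, the reduction of the hard inclusion to ``$0\in\partial(f+g)(x)$ implies $0\in\partial f(x)+\partial g(x)$'' via the tilt $f\mapsto f-\langle p,\cdot\rangle$ is legitimate (the tilted function stays in $\gmRn$ and is still continuous at $u$), and the separation step is sound: points of $\interior A$ do satisfy $\alpha>f(y)-a$, continuity at $u$ gives $\interior A\neq\emptyset$, the sets $\interior A$ and $B$ are disjoint by minimality, letting $\alpha\to+\infty$ forces $s\geq 0$, and your exclusion of the vertical case $s=0$ correctly exploits that $u$ lies both in the open ball $N\subseteq\dom f$ and in $\dom g$. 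Evaluating at the common point $(x,0)$ and reading off the two subgradient inequalities is also right. This is essentially the classical Moreau--Rockafellar proof of the sum rule, whereas the cited source derives it from Fenchel duality (your closing remark); the separation route is more elementary and self-contained, the duality route is shorter if one already has the Fenchel--Rockafellar theorem with attainment, and either is adequate here since the paper only needs the statement, not a particular proof.
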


One important class of functions in $\gmRn$ is called indicator functions. For any convex set $C$, the indicator function $I_C$ is defined by
\begin{equation*}
    I_C(x) := \begin{cases}
    0, & \text{if }x\in C,\\
    +\infty, &\text{if }x\not\in C.
    \end{cases}
\end{equation*}
One can compute the subdifferential of the indicator function and obtain
\begin{equation} \label{eqt:subgrad_indicator}
    \partial I_C(x) = 
    N_C(x) ,
\end{equation}
where $N_C(x)$ denotes the normal cone of $C$ at $x$. Note that $N_C(x) = \{0\}$ when $x$ is in the interior of the set $C$ (see~\cite[p. 137]{convex_analy_book1} for details).

\bigbreak
Next, we recall one important transform in convex analysis called Legendre-Fenchel transform. For all $f\in \gmRn$, the Legendre-Fenchel transform of $f$, denoted as $f^*$, is defined by
\begin{equation}\label{eqt:Legendre}
f^*(p) := \sup_{x\in \Rn} \{\langle p, x\rangle - f(x)\}.
\end{equation}
The Legendre-Fenchel transform gives a duality relationship between $f$ and $f^*$. In other words, if $f\in \gmRn$ holds, then there hold $f^*\in \gmRn$ and $f^{**} = f$.
Similarly, along with this duality, some properties are dual to others, as stated in the following proposition. Recall that a function $g$ is called 1-coercive if $\lim_{\|x\|\to +\infty} \frac{g(x)}{\|x\|} = +\infty$.

\begin{proposition} 
\cite[Propositions E.1.3.8 and E.1.3.9]{hiriart2004}
\label{prop: bkgd_finite_coercive}
Let $f\in \gmRn$. Then $f$ is finite-valued if and only if $f^*$ is 1-coercive. 
\end{proposition}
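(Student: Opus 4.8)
\medskip
\noindent\textbf{Proof proposal.}
The plan is to prove the two implications separately, using the biconjugation identity $f = f^{**}$ (valid for every $f\in\gmRn$, as recalled above) together with the classical fact that a convex function which is finite on all of $\R^n$ is continuous there, hence bounded on every compact set.

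Suppose first that $f$ is finite-valued; then I would show $f^*$ is $1$-coercive. Fix $R>0$. For any $p\in\R^n$ with $p\neq 0$, bounding the supremum defining $f^*$ from below by its value at $x = Rp/\|p\|$ gives
\begin{equation*}
    f^*(p) \;\geq\; \left\langle p,\, \frac{Rp}{\|p\|}\right\rangle - f\!\left(\frac{Rp}{\|p\|}\right) \;=\; R\|p\| - f\!\left(\frac{Rp}{\|p\|}\right).
\end{equation*}
Since $f$ is finite and convex on $\R^n$ it is continuous, hence bounded above on the sphere $\{x\in\R^n\colon \|x\| = R\}$ by some $M_R<+\infty$; therefore $f^*(p)\geq R\|p\| - M_R$, so that $f^*(p)/\|p\| \geq R - M_R/\|p\|$. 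Letting $\|p\|\to+\infty$ yields $\liminf_{\|p\|\to+\infty} f^*(p)/\|p\| \geq R$, and since $R>0$ was arbitrary, $f^*$ is $1$-coercive.

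Conversely, suppose $f^*$ is $1$-coercive; then I would show $f$ is finite-valued. Fix $x\in\R^n$ and consider $\varphi(p) := \langle p,x\rangle - f^*(p)$, which is upper semicontinuous because $f^*$ is lower semicontinuous, and not identically $-\infty$ because $f^*$ is proper. For every $p$ one has $\varphi(p) \leq \|p\|\,\|x\| - f^*(p) = \|p\|\bigl(\|x\| - f^*(p)/\|p\|\bigr)$, and by $1$-coercivity of $f^*$ the right-hand side tends to $-\infty$ as $\|p\|\to+\infty$; hence $\varphi$ is bounded above outside some closed ball and, being upper semicontinuous, attains a finite maximum on that ball, so $\sup_{p\in\R^n}\varphi(p)<+\infty$. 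But this supremum is exactly $f^{**}(x)$, and $f^{**}=f$ since $f\in\gmRn$, whence $f(x)<+\infty$; since $f\in\gmRn$ is also proper, $f(x)>-\infty$, and thus $f$ is finite-valued.

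Beyond the definition of the Legendre--Fenchel transform, the Cauchy--Schwarz bound $\langle p,x\rangle\leq\|p\|\,\|x\|$, and the biconjugation identity, the only nontrivial input is the standard fact used in the first implication, namely that a convex function finite everywhere on $\R^n$ is continuous (hence locally bounded); this is where I would point to the standard references \cite{convex_analy_book1,hiriart2004}. The one place where a little care is needed is the converse implication: one must rule out $f^{**}(x)=+\infty$, which is precisely what the coercivity of $f^*$ (driving the integrand $\varphi$ to $-\infty$ at infinity) together with upper semicontinuity delivers.
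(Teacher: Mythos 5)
Your proof is correct. Note that the paper does not prove this statement at all: it is listed as background and simply cited from Hiriart-Urruty and Lemar\'echal \cite[Propositions E.1.3.8 and E.1.3.9]{hiriart2004}, so there is no in-paper argument to compare against; what you have written is essentially the classical textbook proof behind that citation. Both directions are sound: the first uses only the minorization $f^*(p)\geq R\|p\|-\sup_{\|x\|=R}f(x)$ together with continuity (hence local boundedness) of a finite convex function, and the second uses biconjugation $f=f^{**}$ plus the fact that $1$-coercivity of $f^*$ forces $p\mapsto\langle p,x\rangle-f^*(p)$ to $-\infty$ at infinity, so its supremum is finite. One cosmetic remark: on the compact ball the upper semicontinuous function $\varphi$ attains its maximum, but that maximum could in principle be $-\infty$ rather than finite if the ball happened to miss $\dom f^*$; this does not affect your conclusion $\sup_p\varphi(p)<+\infty$ (alternatively, enlarge the ball to contain a point of $\dom f^*$, or bound $\varphi$ above on the ball using an affine minorant of $f^*$), so it is a wording issue rather than a gap.
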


In particular, the subdifferential set of $f^*$ is characterized by the maximizers in \eqref{eqt:Legendre}, as stated in the following proposition.

\begin{proposition} \cite[Corollary E.1.4.4]{hiriart2004}
\label{prop: bkgd_conjugate_equal_inverse}
Let $f\in \gmRn$ and $p,x \in \Rn$. Then $p\in \partial f(x)$ holds if and only if $x\in \partial f^*(p)$ holds, which is equivalent to $f(x) + f^*(p) = \langle p, x\rangle$.
\end{proposition}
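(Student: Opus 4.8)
The plan is to deduce everything from the Fenchel--Young inequality together with the biconjugation identity $f^{**}=f$ recalled just above the statement. First I would observe that, directly from the definition \eqref{eqt:Legendre} of the Legendre--Fenchel transform, $\langle p,x\rangle - f(x)\le f^*(p)$ for all $x,p\in\Rn$, hence the Fenchel--Young inequality
\begin{equation*}
f(x)+f^*(p)\ge \langle p,x\rangle,
\end{equation*}
with equality precisely when $x$ attains the supremum defining $f^*(p)$. This is the only analytic ingredient; the rest is bookkeeping.

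Next I would establish the chain $p\in\partial f(x)\iff f(x)+f^*(p)=\langle p,x\rangle$. If $p\in\partial f(x)$, then $f(y)\ge f(x)+\langle p,y-x\rangle$ for every $y\in\Rn$, i.e.\ $\langle p,y\rangle - f(y)\le \langle p,x\rangle - f(x)$ for all $y$; taking the supremum over $y$ yields $f^*(p)\le \langle p,x\rangle - f(x)$, which combined with Fenchel--Young forces equality. Conversely, if $f(x)+f^*(p)=\langle p,x\rangle$, then for every $y\in\Rn$ one has $\langle p,y\rangle - f(y)\le f^*(p)=\langle p,x\rangle - f(x)$, and rearranging gives the subgradient inequality $f(y)\ge f(x)+\langle p,y-x\rangle$, so $p\in\partial f(x)$.

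Finally, to obtain the symmetric characterization I would apply the equivalence just proved to the function $f^*$, which belongs to $\gmRn$ by the duality recalled in the text: $x\in\partial f^*(p)$ holds if and only if $f^*(p)+f^{**}(x)=\langle p,x\rangle$. Since $f\in\gmRn$ implies $f^{**}=f$ (Fenchel--Moreau, also recalled just before the statement), the latter is exactly $f^*(p)+f(x)=\langle p,x\rangle$, which closes the loop with the previous paragraph and proves all three conditions equivalent. I do not expect a genuine obstacle here: the argument is elementary once Fenchel--Young is written down, and the only non-trivial external fact used, the biconjugation identity $f^{**}=f$ on $\gmRn$, is already available in the excerpt.
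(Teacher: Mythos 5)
Your argument is correct: the paper does not prove this proposition at all but simply cites it from Hiriart-Urruty--Lemar\'echal (Corollary E.1.4.4), and your derivation via the Fenchel--Young inequality, the characterization of equality through the subgradient inequality, and biconjugation $f^{**}=f$ on $\gmRn$ is exactly the standard textbook argument behind that citation. The only implicit points worth noting are the finiteness checks in the rearrangements (properness of $f$ forces $x\in\dom f$ when $p\in\partial f(x)$, and the equality case forces both $f(x)$ and $f^*(p)$ finite), which your proof handles correctly in substance.
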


Except from the Legendre-Fenchel transform, there is another operator defined on convex functions, called inf-convolution. Given two functions $f,g\in \gmRn$, assume there exists an affine function $l$ such that $f(x)\geq l(x)$ and $g(x)\geq l(x)$ for all $x\in\Rn$. Then, the inf-convolution between $f$ and $g$, denoted by $f\conv g$, is a convex function taking values in $\R\cup \{+\infty\}$ defined by
\begin{equation} \label{eqt:infconv}
(f\conv g)(x) := \inf_{u\in \Rn} \{f(u) + g(x-u)\}.
\end{equation}
In the following proposition, the relation between Legendre-Fenchel transform and inf-convolution is stated. 

\begin{proposition} 
\cite[Theorem~E.2.3.2]{hiriart2004}
\label{prop: bkgd_sum_Legendre_infconv}
Let $f,g \in \gmRn$. Assume the intersection of $\ri \dom f$ and $\ri \dom g$ is non-empty. Then there hold $f^*\conv g^*\in \gmRn$ and $f^*\conv g^* = (f+g)^*$. 
Moreover, the minimization problem
\begin{equation}
    \min_{u\in \Rn} \{f^*(u) + g^*(x-u)\}
\end{equation}
has at least one minimizer whenever $x$ is in $\dom f^*\conv g^*=\dom f^* + \dom g^*$.
\end{proposition}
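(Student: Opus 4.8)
The plan is to reduce the identity to Fenchel's duality theorem applied pointwise in $x$ --- the same tool used in the proof of Proposition~\ref{proposition: existence_uniqueness} --- and then to read off the remaining assertions. First I would record the elementary conjugacy identity $(\varphi\conv\psi)^* = \varphi^* + \psi^*$, valid for any $\varphi,\psi\in\gmRn$ sharing a common affine minorant: one writes out the Legendre--Fenchel transform of the infimum and substitutes $v = x-u$ in the resulting double supremum, which then factorizes as $\varphi^*(p)+\psi^*(p)$. Here $f$ and $g$ do share an affine minorant, since they lie in $\gmRn$ and the hypothesis $\ri\dom f\cap\ri\dom g\neq\emptyset$ forces $f+g$ to be proper, hence affinely minorized; and of course $f^*,g^*\in\gmRn$ are affinely minorized as well, so the identity is available in both directions.

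The core step is the following. Fix $x\in\Rn$ and set $\varphi(y):=f(y)-\langle x,y\rangle$ and $\psi:=g$, so that $\dom\varphi=\dom f$, $\dom\psi=\dom g$, and the qualification $\ri\dom\varphi\cap\ri\dom\psi\neq\emptyset$ holds by hypothesis. Fenchel's duality theorem then gives strong duality together with attainment of the dual problem:
\[
\inf_{y\in\Rn}\{\varphi(y)+\psi(y)\}=\max_{u\in\Rn}\{-\varphi^*(u)-\psi^*(-u)\}.
\]
A direct computation yields $\varphi^*(u)=f^*(u+x)$ and $\psi^*(-u)=g^*(-u)$, while the left-hand side equals $-(f+g)^*(x)$. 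Reindexing by $w=-u$ turns the right-hand side into $-\min_{w\in\Rn}\{f^*(x-w)+g^*(w)\}=-(f^*\conv g^*)(x)$, with the minimum attained. Hence $(f+g)^*(x)=(f^*\conv g^*)(x)$ for every $x$, and the inf-convolution is exact.

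The rest is bookkeeping. The function $f^*\conv g^*$ is convex (an inf-convolution of convex functions with a common affine minorant) and equals $(f+g)^*$, which belongs to $\gmRn$ because $f+g$ is proper, convex and lower semicontinuous; therefore $f^*\conv g^*\in\gmRn$. For the domain identity: if $x=u+v$ with $u\in\dom f^*$ and $v\in\dom g^*$ then $(f^*\conv g^*)(x)\le f^*(u)+g^*(v)<+\infty$, so $\dom f^*+\dom g^*\subseteq\dom(f^*\conv g^*)$; conversely, if $(f^*\conv g^*)(x)<+\infty$ then by the previous paragraph the minimum is attained at some $w$, whence $w\in\dom g^*$, $x-w\in\dom f^*$, and $x\in\dom f^*+\dom g^*$.

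The substantive content is concentrated entirely in the second step: for general $f,g\in\gmRn$ the inf-convolution $f^*\conv g^*$ need be neither exact nor lower semicontinuous, and it is precisely the relative-interior condition that repairs both defects. So everything hinges on having a strong-duality theorem with attainment available; once it is invoked at each fixed $x$, the remaining manipulations of conjugates are routine.
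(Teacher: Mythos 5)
Your argument is correct. Note first that the paper does not prove this proposition at all: it is quoted as background from Hiriart-Urruty and Lemar\'echal (Theorem~E.2.3.2), so there is no internal proof to compare against. Your derivation — fixing $x$, tilting $f$ by the linear form $\langle x,\cdot\rangle$, and invoking Fenchel duality with dual attainment under the qualification $\ri\dom f\cap\ri\dom g\neq\emptyset$ — is a standard and complete way to obtain both the identity $(f+g)^*=f^*\conv g^*$ and the exactness of the inf-convolution; the sign bookkeeping ($\varphi^*(u)=f^*(u+x)$, reindexing $w=-u$) checks out, and the domain identity and membership of $f^*\conv g^*$ in $\gmRn$ follow exactly as you say. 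Two small points worth making explicit: (i) you must use the relative-interior form of Fenchel--Rockafellar duality in $\Rn$ (e.g.\ Rockafellar, Theorem~31.1), which guarantees attainment of the dual supremum, rather than the continuity-point version from Ekeland--Temam used elsewhere in the paper, since $f$ need not be continuous anywhere on $\dom g$ — you do implicitly invoke the right theorem, but the distinction deserves a citation; (ii) in the degenerate case $(f+g)^*(x)=+\infty$ the primal infimum is $-\infty$ and the dual value is $-\infty$, so both sides of the identity are $+\infty$ and nothing about attainment is claimed — this is consistent with the statement, which asserts a minimizer only for $x\in\dom f^*+\dom g^*$, but a one-line remark closing that case would make the proof airtight. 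Your opening paragraph recording $(\varphi\conv\psi)^*=\varphi^*+\psi^*$ is never actually used in the main argument and could be dropped.
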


Let $f$ be a function in $\gmRn$ and $x_0$ be an arbitrary point in $\dom f$. The asymptotic function of $f$, denoted by $f'_\infty$, is defined by 
\begin{equation}\label{eqt:defasym}
    f'_\infty(d) = \sup_{s>0} \frac{f(x_0 + sd) - f(x_0)}{s} = \lim_{s\to+\infty} \frac{f(x_0 + sd) - f(x_0)}{s},
\end{equation}
for all $d\in \R^n$. In fact, this definition does not depend on the point $x_0$.

\begin{proposition} \cite[Example B.3.2.3]{hiriart2004}
\label{prop: bkgd_asym1}
Let $f\in\gmRn$. Take $x_0\in\dom f$. Then the function defined by 
\begin{equation}
    \R^n\times\R \ni (d,t)\mapsto 
    \begin{dcases}
    t\left(f\left(x_0+\frac{d}{t}\right) - f(x_0)\right), & \text{if }t>0,\\
    f'_\infty(d), &\text{if } t=0,\\
    +\infty, &\text{if }t<0,
    \end{dcases}
\end{equation}
is in $\Gamma_0(\R^n\times\R)$. As a result, we have
\begin{equation}
    \liminf_{t\to 0^+, d\to d_0} t\left(f\left(x_0+\frac{d}{t}\right) - f(x_0)\right)
    \geq f'_\infty(d_0).
\end{equation}
\end{proposition}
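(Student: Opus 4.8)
The plan is to recognize the displayed function as a support function on $\R^n\times\R$ — membership in $\Gamma_0(\R^n\times\R)$ is then automatic — and afterwards to read off the $\liminf$ inequality from lower semicontinuity. Write $g\colon\R^n\times\R\to\R\cup\{+\infty\}$ for the function in the statement. A harmless first step replaces $f$ by $h:=f(x_0+\cdot)-f(x_0)$: then $h\in\gmRn$ with $h(0)=0$, one has $\dom h^*=\dom f^*$ and $h^*(p)=f^*(p)-\langle p,x_0\rangle+f(x_0)$, and $h'_\infty=f'_\infty$ since the asymptotic function (cf.~\eqref{eqt:defasym}) is insensitive both to translating the argument and to adding constants. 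In these terms, $g(d,t)=t\,h(d/t)$ for $t>0$, $g(d,0)=h'_\infty(d)$, and $g(d,t)=+\infty$ for $t<0$.

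The core step is to exhibit $g$ as a support function. Put
\begin{equation*}
C:=\{(p,\sigma)\in\R^n\times\R\colon h^*(p)+\sigma\le 0\},
\end{equation*}
which is convex, closed (because $h^*\in\gmRn$) and nonempty (take $p\in\dom h^*$, $\sigma=-h^*(p)$); I claim $g=\ind_C^*$. To verify this I would simply evaluate $\ind_C^*(d,t)=\sup\{\langle p,d\rangle+\sigma t\colon h^*(p)+\sigma\le 0\}$ by first optimizing over $\sigma$ for $p\in\dom h^*$ fixed, where $\sigma$ ranges over $(-\infty,-h^*(p)]$. If $t>0$ the best $\sigma$ is $-h^*(p)$, so $\ind_C^*(d,t)=\sup_{p}\{\langle p,d\rangle-t\,h^*(p)\}=t\,h^{**}(d/t)=t\,h(d/t)$ using $h^{**}=h$; if $t<0$ one sends $\sigma\to-\infty$ to obtain $\ind_C^*(d,t)=+\infty$; if $t=0$ one gets $\ind_C^*(d,0)=\sup_{p\in\dom h^*}\langle p,d\rangle=\ind_{\dom h^*}^*(d)=h'_\infty(d)$ by Proposition~\ref{prop: bkgd_asym2} (applied to $h^*$, together with $h^{**}=h$). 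These three cases match the formula for $g$, so $g=\ind_C^*$; being the support function of a nonempty set, $g$ is proper, convex and lower semicontinuous, i.e.\ $g\in\Gamma_0(\R^n\times\R)$. (The set $C$ is found by computing $g^*$ head-on, splitting the supremum over $\{t>0\}$, $\{t=0\}$, $\{t<0\}$ and substituting $d=te$ on $\{t>0\}$, which yields $g^*=\ind_C$ — a reflected epigraph of $h^*$, exactly in the spirit of the computation in Proposition~\ref{prop: HJPDE}(a).)

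Granting $g\in\Gamma_0(\R^n\times\R)$, the ``as a result'' statement is immediate from lower semicontinuity of $g$ at $(d_0,0)$:
\begin{equation*}
f'_\infty(d_0)=g(d_0,0)\le\liminf_{(d,t)\to(d_0,0)}g(d,t)\le\liminf_{t\to 0^+,\,d\to d_0}g(d,t)=\liminf_{t\to 0^+,\,d\to d_0}\,t\Big(f\big(x_0+\tfrac{d}{t}\big)-f(x_0)\Big),
\end{equation*}
where the last equality uses $g(d,t)=t(f(x_0+d/t)-f(x_0))$ on $\{t>0\}$ and restricting the $\liminf$ to $t>0$ can only increase it.

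I expect the only genuine difficulty to be the boundary slice $\{t=0\}$: the single nontrivial fact used there is $\ind_{\dom h^*}^*=h'_\infty$ (Proposition~\ref{prop: bkgd_asym2}). If one prefers to be fully self-contained, an alternative route is to note that $(d,t)\mapsto t\,h(d/t)$ is convex and lower semicontinuous on $\R^n\times(0,+\infty)$ — it is the perspective function of the convex function $h$ — extend it by $+\infty$ off $\{t>0\}$, pass to the lower semicontinuous hull, and evaluate that hull along a line segment terminating in $\{t>0\}$; the limit one computes is, by the very definition~\eqref{eqt:defasym}, $h'_\infty(d)$, which re-establishes the $\{t=0\}$ formula without citing Proposition~\ref{prop: bkgd_asym2}. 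Everything else is routine conjugate bookkeeping.
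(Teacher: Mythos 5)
Your proof is correct. Note that the paper offers no argument of its own for this proposition: it is quoted verbatim from \cite[Example~B.3.2.3]{hiriart2004}, so there is no internal proof to compare against. Your route is a clean way to make the citation self-contained: after the harmless normalization $h:=f(x_0+\cdot)-f(x_0)$, you identify the displayed function as the support function of the closed convex set $C=\{(p,\sigma)\colon h^*(p)+\sigma\le 0\}$, which settles properness, convexity and lower semicontinuity in one stroke; the three-case evaluation of $\ind_C^*$ (optimal $\sigma=-h^*(p)$ for $t>0$, blow-up for $t<0$, and $\ind_{\dom h^*}^*=h'_\infty$ via Proposition~\ref{prop: bkgd_asym2} together with $h^{**}=h$ for $t=0$) is carried out correctly, and the final $\liminf$ inequality is exactly lower semicontinuity at $(d_0,0)$ combined with the fact that restricting the limit to $t>0$ can only increase the $\liminf$. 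As you observe, this is the same reflected-epigraph/conjugation device the paper itself uses in the proof of Proposition~\ref{prop: HJPDE}(a), so your argument fits naturally with the paper's toolkit; the alternative perspective-function route you sketch at the end is not needed and is the only part left at sketch level.
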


\begin{proposition} \cite[Proposition~E.1.2.2]{hiriart2004}
\label{prop: bkgd_asym2}
Let $f\in\gmRn$. Then the support function of $\dom f$ is given by the asymptotic function of $f^*$ as follows
\begin{equation}
    I_{\dom f}^* = (f^*)'_\infty.
\end{equation}
\end{proposition}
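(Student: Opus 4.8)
The plan is to deduce the identity $I_{\dom f}^* = (f^*)'_\infty$ from the following self-dual statement: for every $g\in\gmRn$, the asymptotic function $g'_\infty$ coincides with the support function $I_{\dom g^*}^*$ of the domain of its Legendre--Fenchel transform. Granting this, the proposition follows at once, since $f\in\gmRn$ gives $f^{**}=f$, hence $\dom f=\dom(f^*)^*$, and applying the self-dual statement to $g:=f^*\in\gmRn$ yields $(f^*)'_\infty = I_{\dom(f^*)^*}^* = I_{\dom f}^*$. So from now on I would fix $g\in\gmRn$, pick a base point $x_0\in\dom g$ (possible since $g$ is proper), and note that $g^*\in\gmRn$ is proper as well, so $\dom g^*\neq\emptyset$; the two inequalities $g'_\infty\geq I_{\dom g^*}^*$ and $g'_\infty\leq I_{\dom g^*}^*$ are to be proved pointwise in $d\in\R^n$.

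For the inequality ``$\geq$'', I would start from the Fenchel--Young inequality $g(x_0+sd)+g^*(p)\geq\langle p,x_0+sd\rangle$, valid for any $p\in\dom g^*$ and $s>0$, rearrange it to
\[
\frac{g(x_0+sd)-g(x_0)}{s}\;\geq\;\langle p,d\rangle+\frac{\langle p,x_0\rangle-g^*(p)-g(x_0)}{s},
\]
and let $s\to+\infty$: the last fraction has a fixed finite numerator and so vanishes, while the left-hand side tends to $g'_\infty(d)$ by the definition~\eqref{eqt:defasym}. This yields $g'_\infty(d)\geq\langle p,d\rangle$ for every $p\in\dom g^*$, hence $g'_\infty(d)\geq I_{\dom g^*}^*(d)$.

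For the inequality ``$\leq$'', the key idea is to exploit biconjugation. If $I_{\dom g^*}^*(d)=+\infty$ there is nothing to show, so assume $\alpha:=I_{\dom g^*}^*(d)<+\infty$, i.e. $\langle p,d\rangle\leq\alpha$ for all $p\in\dom g^*$. Writing $g=g^{**}$ and splitting the affine part from the linear part inside the supremum, for every $s>0$ one gets
\[
g(x_0+sd)=\sup_{p\in\dom g^*}\bigl\{(\langle p,x_0\rangle-g^*(p))+s\langle p,d\rangle\bigr\}\leq\sup_{p\in\dom g^*}\bigl\{\langle p,x_0\rangle-g^*(p)\bigr\}+s\alpha=g(x_0)+s\alpha,
\]
using that the supremum of a sum is at most the sum of the suprema, together with $\sup_{p\in\dom g^*}\langle p,d\rangle=\alpha$ and $\sup_{p\in\dom g^*}\{\langle p,x_0\rangle-g^*(p)\}=g^{**}(x_0)=g(x_0)$. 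Dividing by $s$ and taking the supremum over $s>0$ gives $g'_\infty(d)\leq\alpha=I_{\dom g^*}^*(d)$, which closes the argument.

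I expect the main (indeed essentially the only) delicate point to be the reverse inequality ``$\leq$'': it is the place where one must genuinely invoke $g=g^{**}$ --- which is exactly what membership in $\gmRn$ buys --- and handle the degenerate case $I_{\dom g^*}^*(d)=+\infty$ together with the fact that $\dom g$ and $\dom g^*$ are nonempty because $g$ and $g^*$ are proper. Everything else, namely the reduction via $f^{**}=f$ and the ``$\geq$'' direction, is routine once the Fenchel--Young inequality and the definition of $(\cdot)'_\infty$ are in place.
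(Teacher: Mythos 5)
Your proof is correct: the reduction to the self-dual identity $g'_\infty = I_{\dom g^*}^*$ for $g\in\gmRn$, the Fenchel--Young bound for ``$\geq$'', and the biconjugation bound $g(x_0+sd)\leq g(x_0)+s\,I_{\dom g^*}^*(d)$ for ``$\leq$'' are all sound, including the handling of the degenerate case and the properness of $g$, $g^*$. The paper offers no proof of this statement (it is quoted from Hiriart-Urruty and Lemar\'echal, Proposition~E.1.2.2), and your argument is essentially the standard conjugacy proof of that cited result, so there is nothing further to reconcile.
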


There is another important class of functions called Legendre functions, whose definition is given as follows. Note that here we use an equivalent definition to simplify our proofs.

\begin{definition}
\cite[Section~26]{Rockafellar1970Convex} Suppose $f$ is a function in $\gmRn$. Then $f$ is Legendre (or a Legendre function or a convex function of Legendre type), if $f$ satisfies
\begin{enumerate}
    \item $\interior \dom f$ is non-empty.
    \item $f$ is differentiable on $\interior \dom f$.
    \item The subdifferential of $f$ satisfies
    \begin{equation}
        \partial f(x) = \begin{dcases}
        \emptyset, & \text{if }x\in (\dom f)\setminus (\interior \dom f),\\
        \{\nabla f(x)\}, & \text{if }x\in\interior \dom f.
        \end{dcases}
    \end{equation}
    \item $f$ is strictly convex on $\interior \dom f$.
\end{enumerate}
\end{definition}

For a Legendre function $f$, we have the following property which is used in proofs of this paper. For more properties of Legendre functions, we refer readers to~\cite[Section~26]{Rockafellar1970Convex} and~\cite{Bauschke1997Legendre}.

\begin{proposition}\cite[Theorem~26.5]{Rockafellar1970Convex}\label{prop: bkgd_Legendre}
Let $f$ be a function in $\gmRn$.
Then, $f$ is a Legendre function
if and only if $f^*$ is a Legendre function.
\end{proposition}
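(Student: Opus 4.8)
The plan is to use the involution $f^{**}=f$: the asserted equivalence is symmetric in $f$ and $f^{*}$, so it suffices to prove that if $f$ is Legendre then $g:=f^{*}$ is Legendre. Note that $g\in\gmRn$ automatically and $g^{*}=f$. The backbone of the argument is the conjugacy correspondence of Proposition~\ref{prop: bkgd_conjugate_equal_inverse}, namely $p\in\partial f(x)\Leftrightarrow x\in\partial g(p)\Leftrightarrow f(x)+g(p)=\langle p,x\rangle$. I would also use one elementary fact: \emph{for $\varphi\in\gmRn$ that is differentiable on the open set $\interior\dom\varphi$, $\varphi$ is strictly convex on $\interior\dom\varphi$ if and only if $\nabla\varphi$ is injective there.} The nontrivial direction is that if $\varphi$ were affine on a nondegenerate segment $[q_{0},q_{1}]\subseteq\interior\dom\varphi$, then at an interior point $q_{m}$ of that segment the supporting affine function $q\mapsto\varphi(q_{m})+\langle\nabla\varphi(q_{m}),q-q_{m}\rangle$ would coincide with $\varphi$ on the whole segment (two affine functions on a segment, one lying below the other and equal at an interior point, agree on the segment), whence $\nabla\varphi(q_{m})\in\partial\varphi(q)=\{\nabla\varphi(q)\}$ for every $q$ in the relative interior of $[q_{0},q_{1}]$, contradicting injectivity. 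Applied to $\varphi=f$, this already gives that $\nabla f$ is injective on $\interior\dom f$.

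Step~1: describe $\partial g$. If $p\in\dom g$ and $x\in\partial g(p)$, then $p\in\partial f(x)$; since $f$ is Legendre, a nonempty subdifferential at $x$ forces $x\in\interior\dom f$ and $p=\nabla f(x)$. Combined with the injectivity of $\nabla f$, this shows that $\partial g(p)$ is a singleton whenever it is nonempty, that $\partial g(p)\ne\emptyset$ precisely when $p$ lies in the range $R:=\nabla f(\interior\dom f)$, and that on $R$ the unique subgradient is $(\nabla f)^{-1}(p)$.

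Step~2 (the crux): show $R=\interior\dom g$. The inclusion $\interior\dom g\subseteq R$ is immediate, because a closed proper convex function is subdifferentiable at every point of the interior of its domain, so $\partial g(p)\ne\emptyset$ there and hence $p\in R$. For the reverse inclusion I would invoke that $\nabla f$ is continuous on $\interior\dom f$ (continuity of the gradient of a convex function on the interior of its domain) and injective, so Brouwer's invariance of domain makes $R$ an \emph{open} subset of $\R^{n}$; since $R\subseteq\dom g$, this gives $R\subseteq\interior\dom g$. Because $\interior\dom f\ne\emptyset$, the set $R$ is nonempty, hence so is $\interior\dom g$. This is condition~1 of the definition of Legendre for $g$; together with Step~1 and \cite[Corollary~D.2.1.4]{hiriart2004} (a singleton subdifferential at an interior point of the domain yields differentiability) it gives condition~2 ($g$ differentiable on $\interior\dom g=R$) and condition~3 ($\partial g(p)=\emptyset$ for $p\in\dom g\setminus\interior\dom g$, since such $p\notin R$). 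Finally, condition~4 follows from the elementary fact above applied to $\varphi=g$: $g$ is differentiable on $\interior\dom g$ with gradient $(\nabla f)^{-1}$, which is injective, so $g$ is strictly convex on $\interior\dom g$. Hence $g=f^{*}$ is Legendre, and by symmetry the equivalence is proved.

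I expect Step~2 to be the only genuinely substantive point; all the rest is bookkeeping with the conjugacy identity and the single fact about gradients of strictly convex functions. What invariance of domain really provides is the passage from ``$\nabla f$ maps $\interior\dom f$ into $\dom f^{*}$'' to ``$\nabla f$ maps $\interior\dom f$ onto $\interior\dom f^{*}$'', i.e.\ ruling out that the gradient image touches the boundary of $\dom f^{*}$ (equivalently, that $f^{*}$ fails to be smooth in Rockafellar's sense at a boundary point of its domain). One could instead quote the monotone-operator fact that the subdifferential of a Legendre function is a bijection $\interior\dom f\to\interior\dom f^{*}$ (cf.\ \cite[Theorem~26.5]{Rockafellar1970Convex} and \cite{Bauschke1997Legendre}), but the route above uses only tools already recalled in this appendix together with a standard topological theorem.
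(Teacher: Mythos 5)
Your argument is correct, but it is necessarily a different route from the paper's: the paper does not prove this proposition at all, it quotes it as \cite[Theorem~26.5]{Rockafellar1970Convex}, and Rockafellar's own proof stays entirely inside convex analysis, passing through essential smoothness and essential strict convexity (his Theorems~26.1 and~26.3) to show that conjugation swaps these two properties and that $\nabla f$ maps $\interior\dom f$ bijectively onto $\interior\dom f^*$. You replace that machinery by the conjugacy relation of Proposition~\ref{prop: bkgd_conjugate_equal_inverse} together with Brouwer's invariance of domain, which is used precisely at the crux you identify: openness of $R=\nabla f(\interior\dom f)$, i.e.\ ruling out that the gradient image meets the boundary of $\dom f^*$. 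This buys a short, self-contained proof at the price of importing a nontrivial topological theorem plus the standard facts that a convex function differentiable on an open set is $C^1$ there, that $g\in\gmRn$ is subdifferentiable on $\interior\dom g$, and that a singleton subdifferential at an interior point yields differentiability (the same \cite[Corollary~D.2.1.4]{hiriart2004} the paper uses elsewhere); Rockafellar's route is longer but purely convex-analytic and gives the finer one-directional statements (smoothness of $f$ versus strict convexity of $f^*$) separately.

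One small slip in the write-up: the direction of your ``elementary fact'' that you actually prove (gradient injective $\Rightarrow$ strictly convex, via the affine-segment argument) is the one needed for $g=f^*$ in condition~4, not the one that ``already gives that $\nabla f$ is injective''. For the latter you need the converse implication, which is immediate but should be stated: if $\nabla f(x_1)=\nabla f(x_2)=p$ with $x_1\neq x_2$ in $\interior\dom f$, then $x_1$ and $x_2$ both minimize the function $f-\langle p,\cdot\rangle$, which is strictly convex on $\interior\dom f$ since $f$ is Legendre, a contradiction. With that one line added, all four conditions of the paper's definition are verified for $f^*$, and biconjugation $f^{**}=f$ gives the stated equivalence.
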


\bigskip

Now, we are going to define the primal-dual Bregman distance denoted by $d$ and the (primal-primal) Bregman distance denoted by $D$. For properties of Bregman distances, we refer readers to~\cite{Bauschke1997Legendre} and \cite{BREGMAN1967200}, for instance.
For $\fBreg\in \gmRn$, the primal-dual Bregman distance with respect to $\fBreg$ between a vector $x$ in the primal space $\R^n$ and a vector $p$ in the dual space $\R^n$, denoted by $d_\fBreg(x,p)$, is defined as follows
\begin{equation} \label{eqt: def_Bregd}
    d_\fBreg(x,p) := \fBreg(x) + \fBreg^*(p) - \langle p,x\rangle.
\end{equation}
When we have $\fBreg\in \gmRn$, since the equality $(\fBreg^*)^* = \fBreg$ holds, we have
\begin{equation}\label{eqt: equal_Bregd}
    d_{\fBreg^*}(p,x) = \fBreg^*(p) + (\fBreg^*)^*(x) - \langle x,p\rangle = d_{\fBreg}(x,p).
\end{equation}

Additionally, for two points $x,u\in \dom \fBreg$, let $\fBreg$ be  differentiable at $u$.
The Bregman distance $D_\fBreg$ between $x$ and $u$ with respect to the function $\fBreg$ is defined as 
\begin{equation}\label{eqt: def_BregD}
D_\fBreg(x,u)= \fBreg(x)- \fBreg(u) - \langle \nabla \fBreg(u),x-u\rangle.    
\end{equation}

These two definitions are related to each other by the following result.

\begin{proposition} \label{prop: Breg_equal_twodef} 
Let $\fBreg\in \gmRn$.
Then for all $x,u\in \dom \fBreg$ such that $\fBreg$ is differentiable at $u$, we have
\begin{equation}
    d_\fBreg(x,\nabla \fBreg(u)) = D_\fBreg(x,u).
\end{equation}
\end{proposition}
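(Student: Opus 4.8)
The plan is to expand both sides of the claimed identity from their definitions and observe that they coincide after cancellation. Recall that for $u \in \dom \fBreg$ where $\fBreg$ is differentiable, $p := \nabla \fBreg(u)$ satisfies $p \in \partial \fBreg(u)$, so by Proposition~\ref{prop: bkgd_conjugate_equal_inverse} we have the Fenchel equality $\fBreg(u) + \fBreg^*(p) = \langle p, u\rangle$, i.e.\ $\fBreg^*(\nabla \fBreg(u)) = \langle \nabla \fBreg(u), u\rangle - \fBreg(u)$. This is the key algebraic fact that converts the conjugate term appearing in $d_\fBreg$ into something expressed purely in terms of $\fBreg$ and $u$.

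First I would write out the left-hand side using~\eqref{eqt: def_Bregd} with $p = \nabla \fBreg(u)$:
\begin{equation*}
    d_\fBreg(x, \nabla \fBreg(u)) = \fBreg(x) + \fBreg^*(\nabla \fBreg(u)) - \langle \nabla \fBreg(u), x\rangle.
\end{equation*}
Then I would substitute $\fBreg^*(\nabla \fBreg(u)) = \langle \nabla \fBreg(u), u\rangle - \fBreg(u)$, obtaining
\begin{equation*}
    d_\fBreg(x, \nabla \fBreg(u)) = \fBreg(x) - \fBreg(u) + \langle \nabla \fBreg(u), u\rangle - \langle \nabla \fBreg(u), x\rangle = \fBreg(x) - \fBreg(u) - \langle \nabla \fBreg(u), x - u\rangle,
\end{equation*}
which is exactly $D_\fBreg(x,u)$ by~\eqref{eqt: def_BregD}. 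That completes the argument.

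This proof is essentially a one-line computation once the Fenchel equality is invoked, so there is no real obstacle; the only point requiring a moment of care is checking that $\nabla \fBreg(u) \in \partial \fBreg(u)$ so that Proposition~\ref{prop: bkgd_conjugate_equal_inverse} applies — but this is immediate from the definition of the gradient as a subgradient when $\fBreg$ is differentiable at $u$. One should also note that both $d_\fBreg(x, \nabla \fBreg(u))$ and $D_\fBreg(x,u)$ are well-defined real numbers (or $+\infty$) precisely when $x \in \dom \fBreg$, matching the hypothesis, so the identity is an identity of extended reals with no hidden finiteness issue.
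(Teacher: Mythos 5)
Your proposal is correct and follows essentially the same route as the paper: expand $d_\fBreg(x,\nabla\fBreg(u))$ from its definition and replace $\fBreg^*(\nabla\fBreg(u))$ by $\langle\nabla\fBreg(u),u\rangle-\fBreg(u)$ via the Fenchel equality of Proposition~\ref{prop: bkgd_conjugate_equal_inverse}, which is exactly the paper's one-step computation. Your added remarks about $\nabla\fBreg(u)\in\partial\fBreg(u)$ and well-definedness are fine but not needed beyond what the paper already does.
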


\begin{proof}
For all $x,u\in \dom \fBreg$, we have
\begin{equation}
\begin{split}
    d_\fBreg(x,\nabla \fBreg(u)) &= \fBreg(x) + \fBreg^*(\nabla \fBreg(u)) - \langle \nabla \fBreg(u),x\rangle\\
    &= \fBreg(x) + \langle \nabla \fBreg(u), u\rangle - \fBreg(u) - \langle \nabla \fBreg(u), x\rangle \\
    &= D_\fBreg(x,u),
\end{split}
\end{equation}
where the second equality holds by Proposition~\ref{prop: bkgd_conjugate_equal_inverse}.
\end{proof}

We also have the following equality for $D_\fBreg$.

\begin{proposition}\label{prop: Breg_scaling}
Let $\fBreg\in \gmRn$. 
Let $t>0$ and $\frac{x}{t}, \frac{u}{t}\in\dom \fBreg^*$. Assume $f^*$ is differentiable at 
$\frac{u}{t}$.
Then, we have
\begin{equation}
    tD_{\fBreg^*}\left(\frac{x}{t}, \frac{u}{t}\right)
    = D_{(t\fBreg)^*}(x,u).
\end{equation}
\end{proposition}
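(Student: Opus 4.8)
The plan is to unwind both sides using the definition of the (primal-primal) Bregman distance in~\eqref{eqt: def_BregD} together with the elementary scaling rule for the Legendre-Fenchel transform, namely $(t\fBreg)^* = t\,\fBreg^*\!\left(\tfrac{\cdot}{t}\right)$ (see~\cite[Proposition~E.1.3.1]{hiriart2004}). First I would record that, since $\fBreg\in\gmRn$ and $t>0$, the function $(t\fBreg)^*$ equals $t\fBreg^*(\cdot/t)$, so its gradient (wherever it exists) satisfies $\nabla (t\fBreg)^*(u) = \nabla \fBreg^*(u/t)$ by the chain rule; in particular, the hypothesis that $\fBreg^*$ is differentiable at $u/t$ is exactly what is needed for $(t\fBreg)^*$ to be differentiable at $u$, so the right-hand side $D_{(t\fBreg)^*}(x,u)$ is well-defined. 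I would also note that $x/t,\,u/t\in\dom\fBreg^*$ translates to $x,u\in\dom (t\fBreg)^*$, so the left-hand side is well-defined too.

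Next I would simply expand the right-hand side:
\begin{equation*}
D_{(t\fBreg)^*}(x,u) = (t\fBreg)^*(x) - (t\fBreg)^*(u) - \langle \nabla (t\fBreg)^*(u), x-u\rangle.
\end{equation*}
Substituting $(t\fBreg)^*(x) = t\fBreg^*(x/t)$, $(t\fBreg)^*(u) = t\fBreg^*(u/t)$, and $\nabla (t\fBreg)^*(u) = \nabla\fBreg^*(u/t)$, the right-hand side becomes
\begin{equation*}
t\fBreg^*\!\left(\frac{x}{t}\right) - t\fBreg^*\!\left(\frac{u}{t}\right) - \left\langle \nabla\fBreg^*\!\left(\frac{u}{t}\right),\, x-u\right\rangle.
\end{equation*}
Then I would pull the factor $t$ out of the inner product by writing $x-u = t\left(\tfrac{x}{t}-\tfrac{u}{t}\right)$, which turns the expression into
\begin{equation*}
t\left(\fBreg^*\!\left(\frac{x}{t}\right) - \fBreg^*\!\left(\frac{u}{t}\right) - \left\langle \nabla\fBreg^*\!\left(\frac{u}{t}\right),\, \frac{x}{t}-\frac{u}{t}\right\rangle\right) = tD_{\fBreg^*}\!\left(\frac{x}{t},\frac{u}{t}\right),
\end{equation*}
which is precisely the left-hand side, by the definition~\eqref{eqt: def_BregD} applied to $\fBreg^*$ at the points $x/t$ and $u/t$.

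This is an entirely routine computation, so there is no genuine obstacle; the only point requiring a little care is the justification that $(t\fBreg)^*$ is differentiable at $u$ with gradient $\nabla\fBreg^*(u/t)$, which follows from the scaling identity for conjugates and the chain rule, using the hypothesis that $\fBreg^*$ is differentiable at $u/t$. Everything else is substitution and factoring out $t$. I would present the argument as a short chain of displayed equalities, with a one-line remark at the start recording the conjugate scaling formula and the resulting gradient identity.
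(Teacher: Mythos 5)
Your proposal is correct and follows essentially the same route as the paper's proof: both expand the definition of $D$, use the scaling identity $(t\fBreg)^* = t\fBreg^*(\cdot/t)$ together with the chain rule to identify $\nabla (t\fBreg)^*(u) = \nabla\fBreg^*(u/t)$, and then match terms after factoring out $t$ (the paper phrases this by introducing $\gBreg(w)=t\fBreg^*(w/t)$ and starting from the left-hand side, but the computation is identical). Your extra remarks on well-definedness of both sides are fine and add nothing problematic.
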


\begin{proof}
By definition of $D_{\fBreg^*}$ in~\eqref{eqt: def_BregD}, it follows that
\begin{equation}\label{eqt: Breg_equality_pf1}
\begin{split}
    tD_{\fBreg^*}\left(\frac{x}{t}, \frac{u}{t}\right)
    &= t\left(\fBreg^*\left(\frac{x}{t}\right) -\fBreg^*\left(\frac{u}{t}\right) - \left\langle \nabla \fBreg^*\left(\frac{u}{t}\right), \frac{x-u}{t}\right\rangle \right)\\
    &= t\fBreg^*\left(\frac{x}{t}\right) -t\fBreg^*\left(\frac{u}{t}\right) - \left\langle \nabla \fBreg^*\left(\frac{u}{t}\right), x-u\right\rangle.
\end{split}
\end{equation}
Define $\gBreg\colon \R^n\to\R\cup\{+\infty\}$ by $\gBreg(w):= t\fBreg^*(\frac{w}{t})$ for all $w\in \R^n$, then there hold $\nabla \gBreg(u) = \nabla \fBreg^*(\frac{u}{t})$ and $\gBreg = (t\fBreg)^*$.
And hence by~\eqref{eqt: Breg_equality_pf1} we obtain
\begin{equation}
    tD_{\fBreg^*}\left(\frac{x}{t}, \frac{u}{t}\right)
    = \gBreg(x) - \gBreg(u) - \langle \nabla \gBreg(u), x-u\rangle
    = D_{\gBreg}(x,u) = D_{(t\fBreg)^*}(x,u),
\end{equation}
which proves the statement.
\end{proof}

\end{document}